\documentclass[11pt,letterpaper]{amsart}

\usepackage[margin=1.08in]{geometry}
\usepackage{amsmath,amssymb,amsthm,mathtools}
\usepackage{microtype}
\usepackage[colorlinks=true,linkcolor=blue,citecolor=blue,urlcolor=blue]{hyperref}
\usepackage[nameinlink,capitalize]{cleveref}
\usepackage{lea-tags}

\numberwithin{equation}{section}

\DeclareMathOperator{\tr}{tr}
\DeclareMathOperator{\dist}{dist}
\DeclareMathOperator{\osc}{osc}
\DeclareMathOperator{\Lip}{Lip}

\newcommand{\TT}{\mathcal{T}}
\newcommand{\GG}{\mathcal{G}}

\newcommand{\Sn}{\mathcal S^n}
\newcommand{\eps}{\varepsilon}
\newcommand{\norm}[1]{\left\lVert #1\right\rVert}

\newenvironment{leatargetblock}{}{}
\newenvironment{leatargetcontinuation}{\noindent}{}

\newtheorem{theorem}{Theorem}[section]
\newtheorem{proposition}[theorem]{Proposition}
\newtheorem{lemma}[theorem]{Lemma}
\newtheorem{corollary}[theorem]{Corollary}

\theoremstyle{definition}
\newtheorem{definition}[theorem]{Definition}
\theoremstyle{remark}
\newtheorem{remark}[theorem]{Remark}

\AddToHook{env/proposition/begin}{\crefalias{theorem}{proposition}}
\AddToHook{env/lemma/begin}{\crefalias{theorem}{lemma}}
\AddToHook{env/corollary/begin}{\crefalias{theorem}{corollary}}
\AddToHook{env/claim/begin}{\crefalias{theorem}{claim}}
\AddToHook{env/definition/begin}{\crefalias{theorem}{definition}}
\AddToHook{env/remark/begin}{\crefalias{theorem}{remark}}

\crefname{theorem}{theorem}{theorems}
\Crefname{theorem}{Theorem}{Theorems}
\crefname{proposition}{proposition}{propositions}
\Crefname{proposition}{Proposition}{Propositions}
\crefname{lemma}{lemma}{lemmas}
\Crefname{lemma}{Lemma}{Lemmas}
\crefname{corollary}{corollary}{corollaries}
\Crefname{corollary}{Corollary}{Corollaries}
\crefname{claim}{claim}{claims}
\Crefname{claim}{Claim}{Claims}
\crefname{definition}{definition}{definitions}
\Crefname{definition}{Definition}{Definitions}
\crefname{remark}{remark}{remarks}
\Crefname{remark}{Remark}{Remarks}

\title[Lipschitz regularity]{Uniform Lipschitz regularity for two-phase singularly perturbed fully nonlinear elliptic equations}

\author[T. M. Nascimento]{Thialita M. Nascimento}
\address{Department of Mathematics, Universidade Federal da Para\'iba, Jo\~ao Pessoa, PB, Brazil}
\email{nascimento@mat.ufpb.br}

\author[A. Sobral]{Aelson Sobral}
\address{Applied Mathematics and Computational Sciences, King Abdullah University of Science and Technology, Thuwal, Saudi Arabia}
\email{aelson.sobral@kaust.edu.sa}

\author[E. V. Teixeira]{Eduardo V. Teixeira}
\address{Department of Mathematics, Oklahoma State University, Stillwater, OK, USA}
\email{eduardo.teixeira@okstate.edu}

\subjclass[2020]{Primary 35B65, 35J60; Secondary 35B25, 35R35}
\keywords{fully nonlinear elliptic equations, singular perturbations, two-phase problems, viscosity solutions, uniform Lipschitz estimates}

\begin{document}

\begin{abstract}
We study sign-changing viscosity solutions of the singularly perturbed fully
nonlinear equation
$$
F(D^2u_\varepsilon)
=
\frac{\alpha}{\varepsilon}
\beta\left(\frac{u_\varepsilon}{\varepsilon}\right)
\qquad\text{in }B_1\subset\mathbb R^n,
$$
where $F$ is uniformly elliptic and
$\beta\in C_c (-1,1)$ is nonnegative. We prove the scale-sharp
estimate
$$
\|\nabla u_\varepsilon\|_{L^\infty(B_{1/2})}
\leq
C\left(
\|u_\varepsilon\|_{L^\infty(B_1)}+\sqrt\alpha
\right),
$$
with $C$ depending only on the dimension, the ellipticity constants, and
$\beta$, and independent of $\varepsilon$ and $\alpha$. This removes a
longstanding compactness obstruction in the analysis of fully nonlinear
two-phase singular perturbations. The difficulty is structural: at positive
$\varepsilon$ there is neither a free boundary nor a prescribed transmission
law, while the general fully nonlinear setting provides no monotonicity
formula capable of controlling the interaction of the two phases. The proof develops a diffuse counterpart of the De Silva--Savin
decay-versus-Lipschitz alternative. Exact planar transitions furnish the local
comparison geometry, and curved-test compactness carries this geometry across
collapsing reaction layers. An intrinsic transition-region estimate reduces
the problem to linear growth from buffered level boundaries. The resulting
dyadic continuation is closed by a large-slope stopping argument: bounded
accumulated slopes yield the desired growth directly, whereas unbounded slopes
force the effective reaction strength to vanish after normalization and lead
to a contradiction. The estimate is quantitatively optimal and supplies the
scale-invariant compactness framework required for the subsequent
sharp-interface analysis.
\end{abstract}

\maketitle

\section{Introduction}\label{sec:intro}

\subsection{Diffuse interfaces and the two-phase obstruction}

Uniform regularity is the mechanism by which a diffuse singular perturbation
becomes a free-boundary problem. For each fixed $\eps>0$, the transition has a
small but finite thickness; in combustion models, for example, $\eps$ may be
viewed as the reciprocal of a normalized activation energy, and the reaction
is distributed across a thin region rather than concentrated on an idealized
interface. Estimates independent of $\eps$ serve two purposes. They describe
the diffuse model without resolving its microscopic scale, and they place its
sharp-interface limits in a compact class. We refer to
\cite{WilliamsCombustion,BuckmasterLudford,
BerestyckiLarrouturouLions,CaffarelliVazquez} for classical background from
combustion and reaction--diffusion theory.

The modern study of uniform estimates for such regularizations begins with
Berestycki, Caffarelli, and Nirenberg \cite{BCN}; see also
\cite{CaffarelliUniform,Weiss} and
\cite[Section~1.2]{CaffarelliSalsa}. In the one-phase regime, positivity
provides one-sided control through Harnack-type inequalities.
This structure has led to uniform regularity results for elliptic, parabolic,
degenerate, and nonlocal problems; see, among others,
\cite{CaffarelliVazquez,CaffarelliKenig,BerestyckiHamel,
CaffarelliLeeMellet,FernandezBonderWolanski,
DanielliPetrosyanShahgholian,LedermanWolanskiNonlocal,
TeixeiraVariational,AraujoRicarteTeixeira,
RicarteTeymurazyanUrbano,daSilvaViloria}. For fully nonlinear elliptic
equations, Ricarte and Teixeira \cite{RicarteTeixeira} established the optimal
uniform Lipschitz estimate for nonnegative solutions and developed the
corresponding asymptotic free-boundary theory.

The sign-changing problem is qualitatively different. Both phases remain
active near the reaction layer, and positivity no longer supplies a preferred
side from which to control the solution. For linear, divergence-form variational
equations, the interaction of the phases can be governed by
Alt--Caffarelli--Friedman monotonicity and its variants; see
\cite{AltCaffarelliFriedman,LedermanWolanskiTwoPhase,
LedermanWolanskiForcing,CaffarelliJerisonKenig,CaffarelliLedermanWolanskiI,CaffarelliLedermanWolanskiII}. In particular,
\cite[Section~5]{CaffarelliJerisonKenig} obtains uniform gradient bounds for
semilinear two-phase regularizations of the Laplacian through an almost
monotonicity formula. Extensions to certain variable-coefficient settings are
available under additional structural assumptions
\cite{TeixeiraZhangElliptic,TeixeiraZhangParabolic,
MatevosyanPetrosyan}. These arguments use integration by parts, quadratic
energies, or distributional inequalities for the positive and negative
truncations. None of these mechanisms survives for a general fully nonlinear
operator.

There is a second, logically distinct obstruction. Sharp-interface theories
begin with a free boundary and a transmission condition. In Caffarelli's
foundational trilogy
\cite{CaffarelliI,CaffarelliII,CaffarelliIII}, and in the subsequent
fully nonlinear theories developed by Wang \cite{WangI,WangII}, Feldman
\cite{FeldmanFullyNonlinear}, Ferrari \cite{FerrariFullyNonlinear}, and
De Silva, Ferrari, and Salsa
\cite{DeSilvaFerrariSalsa,DeSilvaFerrariSalsaFullyNonlinear}, the
transmission condition is part of the formulation of the problem. More precisely, the solution is
required to satisfy, in the viscosity sense, a condition of the form
$$
u_\nu^+
=
G(u_\nu^-,x,\nu)
$$
on the free boundary. This condition couples the phases, restricts the
admissible blow-ups, and drives both the Lipschitz theory and the subsequent
improvement of flatness. A diffuse approximation has no interface at positive
$\eps$ and no transmission law available as an input. The interface and the
relation between its emerging one-sided slopes must both be recovered, if at
all, from estimates uniform in $\eps$.

These two missing structures explain why the fully nonlinear two-phase
problem has remained outside the reach of the existing singular-perturbation
theory. The sharp-interface theory cannot simply be applied before the
compactness needed to produce a sharp interface has been established. In
particular, the scale-uniform Lipschitz
regularity of general sign-changing solutions has long been regarded as a central open problem in the theory. The purpose of
this paper is to resolve this obstruction.

\subsection{Main result and its significance}

\leadefinition{label=SingularPerturbationProblemData, uses={StructuralDataAndNotation,Uniformellipticity,ViscositySolutionConvention}, context={Use n at least 2, 0 < lambda <= Lambda, epsilon > 0, alpha >= 0, beta continuous, nonnegative, nonzero, and compactly supported in (-1,1). The operator acts on real symmetric matrices, is continuous, translation invariant, uniformly elliptic, and normalized by F(0)=0.}}{We consider viscosity solutions of
\begin{equation}\label{eq:singular-perturbation-intro}
F(D^2u_\eps)
=
\frac{\alpha}{\eps}
\beta\left(\frac{u_\eps}{\eps}\right)
\qquad\text{in }B_1\subset\mathbb R^n,
\end{equation}
where $n\geq2$, $\eps>0$, $\alpha\geq0$,
$$
\beta\in C_c (-1,1),
\qquad
\beta\geq0,
\qquad
\beta\not\equiv0,
$$
and $F:\Sn\to\mathbb R$ is continuous, translation invariant, uniformly
elliptic, and normalized by $F(0)=0$. Since $\beta$ is supported in $(-1,1)$,
the equation is homogeneous away from the transition region
$$
\TT_\eps(u_\eps)
:=
\{x\in B_1:|u_\eps(x)|<\eps\}.
$$
}
The $L^\infty$ size of the source is of order $\alpha/\eps$; standard
estimates for equations with bounded right-hand side therefore degenerate as
$\eps\downarrow0$. The following theorem shows that the gradients do not.

\begin{theorem}[Uniform Lipschitz estimate]\label{thm:main-intro}
Let $F:\Sn\to\mathbb R$ be continuous and $(\lambda,\Lambda)$-elliptic, with
$F(0)=0$, and let $u_\eps\in C(B_1)$ be a viscosity solution of
\eqref{eq:singular-perturbation-intro}. Then
\begin{equation}\label{eq:main-estimate-intro}
\|\nabla u_\eps\|_{L^\infty(B_{1/2})}
\leq
C\left(
\|u_\eps\|_{L^\infty(B_1)}+\sqrt\alpha
\right),
\end{equation}
where $C$ depends only on $n$, $\lambda$, $\Lambda$, and $\beta$, and is
independent of $\eps$, $\alpha$, $F$, and $u_\eps$.
\end{theorem}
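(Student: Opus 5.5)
We first reduce the statement to a normalized form. Set $M:=\|u_\eps\|_{L^\infty(B_1)}+\sqrt\alpha$ and consider $v:=u_\eps/M$. Then $v$ solves $F_M(D^2v)=\frac{\alpha}{M\eps}\beta\bigl(\frac{v}{\eps/M}\bigr)$ with $F_M(X)=F(MX)/M$, which is again $(\lambda,\Lambda)$-elliptic with $F_M(0)=0$. Writing $\delta:=\eps/M$, the right-hand side equals $\frac{\alpha/M^2}{\delta}\beta(v/\delta)$, and $\alpha/M^2\le1$. It therefore suffices to prove $|\nabla v|\le C$ in $B_{1/2}$ for solutions with $\|v\|_\infty\le1$ and $\alpha\le1$, uniformly in $\delta$ and in the operator.

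Away from the transition region $\TT_\delta(v)$ the equation is homogeneous, so interior $C^{1,\bar\alpha}$ Krylov--Safonov/Caffarelli estimates give gradient bounds at scale $\dist(x,\TT_\delta)$. By a standard covering argument, the theorem reduces to a \emph{linear growth estimate from the transition region}: for $x_0\in\TT_\delta(v)\cap B_{3/4}$ and $\delta\le r\le1/8$,
\[
\sup_{B_r(x_0)}|v|\le C r .
\]
Granting this, at points $y$ with $d=\dist(y,\TT_\delta)$ we rescale $B_d(y)$, where $|v|\le Cd$, and obtain $|\nabla v(y)|\le C$. Inside the layer, and within distance $\delta$ of it, we rescale by $\delta$: the function $w(z)=v(x_0+\delta z)/\delta$ solves $\tilde F(D^2w)=\alpha\beta(w)$ with bounded right-hand side and $|w|\le C$ on $B_2$, so $C^{1,\bar\alpha}$ estimates give $|\nabla v|\le C$ there.

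The linear growth we would prove by a dyadic argument in the spirit of De Silva--Savin. Let $S_k:=2^k\sup_{B_{2^{-k}}(x_0)}|v|$ for $2^{-k}\ge\delta$. The claim is that there are $\Theta$ and $C_0$, depending only on $n,\lambda,\Lambda,\beta$, such that $S_{k+1}\le\max\{S_k,\Theta\}$ up to a summable error, or more simply that $S_k$ never exceeds a fixed bound $C_0$. We argue by contradiction: suppose there are sequences (operators $F_j$, widths $\delta_j$, functions $v_j$, scales $r_j$) with accumulated slopes $S_j\to\infty$ at a first stopping scale. Normalize $w_j(z)=v_j(x_j+r_jz)/(r_jS_j)$. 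Then $|w_j|\le C$ on $B_1$ by maximality of the stopping scale, $w_j$ is bounded by $C|z|$ at larger dyadic scales, $w_j(0)$ is tiny, and $w_j$ solves an equation whose reaction term has the form $\frac{\alpha_j/S_j^2}{\eta_j}\beta(w/\eta_j)$ with $\eta_j=\delta_j/(r_jS_j)\to0$. The effective strength $\alpha_j/S_j^2\to0$.

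The main obstacle is passing to the limit. Uniform $C^{\bar\alpha}$ compactness of the $w_j$ is not automatic, because the right-hand side has size $1/\eta_j$ in $L^\infty$. We would handle this through an intrinsic transition-region estimate, namely a barrier/comparison argument using exact one-dimensional planar transition solutions $\phi(x\cdot e/\eta)$, which are computable ODE profiles for the rotated operator. This argument should show that across the layer $w_j$ deviates from a planar profile only by $o(1)$, and that the total reaction "mass" is controlled by the squared jump of slopes, which is proportional to the effective strength. Once equicontinuity is secured via curved test functions, the limit $w$ solves $F_\infty(D^2w)=0$ in the viscosity sense across the limiting zero set, because the reaction strength vanishes. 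Moreover $w(0)=0$, $|w|\le C|z|$ globally, and $\sup_{B_1}|w|\ge c>0$ by the choice of stopping scale. This yields a homogeneous solution with linear growth, hence, by Liouville, $w$ is affine. Finally, an improvement-of-flatness step at the next dyadic scale contradicts the stopping condition, namely that the slope doubled at scale $r_j/2$.
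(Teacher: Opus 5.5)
Your normalization and your reduction to linear growth from the layer are fine. Rescaling by $\delta$ inside the layer and by $\dist(y,\TT_\delta)$ outside it is a valid substitute for the paper's buffered seminorm $\mathcal B(v)$.

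The gap is in how you close the dyadic argument. At a first-passage scale the blow-up limit is $w(z)=p\cdot z$ with $|p|=1$. This is consistent with everything you have recorded: the homogeneous equation, $|w|\le C|z|$ at larger scales, $\sup_{B_1}|w|=1$ and $w(0)=0$. An affine limit is exactly what a large-slope regime looks like, so it yields no contradiction.

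Your ``slope doubled at scale $r_j/2$'' is not implied by $\sup_k S_k=\infty$. First passage of a level $L_j$ only gives $S_{k_j}>S_{k_j-1}$, and $S_k$ may diverge with consecutive ratios tending to $1$. What your compactness argument actually gives is $S_{k+1}\le(1+\epsilon)S_k$ once $S_k\ge\Theta(\epsilon)$ is a running maximum. Iterating this over the $\sim\log_2(1/\delta)$ available scales only yields growth like $r^{1-\log_2(1+\epsilon)}$, i.e.\ an almost-Lipschitz bound.

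The variant ``$S_{k+1}\le\max\{S_k,\Theta\}$ up to a summable error'' would need a quantitative flatness improvement. But once the flatness error reaches the $\sqrt\gamma$ floor, the slope increments are only $O(\sqrt\gamma)$ per scale and are not summable (compare \Cref{rem:slope-summability}). Your equicontinuity claim for $w_j$ is also only asserted; the paper needs a separate uniform diffuse H\"older estimate for it (\Cref{lem:uniform-diffuse-holder}).

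The missing ingredient is a mechanism that uses the macroscopic normalization $\norm{v}_{L^\infty(B_1)}\le1$. In blown-up coordinates this reads $R_j^{-1}\sup_{B_{R_j}}|w_j|\le C/S_j\to0$ at $R_j\sim1/r_j$. You then need a way to carry the unit slope from radius $1$ out to radius $R_j$, across unboundedly many scales, with errors uniform in the scale.

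The paper does this with the diffuse decay-versus-Lipschitz alternative (\Cref{prop:diffuse-DSS}; Step~3 of \Cref{lem:diffuse-affine-continuation}). Set $a_{j,m}:=R_m^{-1}\norm{w_j}_{L^\infty(B_{R_m})}$ for the normalized amplitudes, with $R_m$ geometric. At each intermediate scale, either the normalized amplitude halves going inward, up to an additive $C\sqrt{\alpha_j}/S_j=o_j(1)$, or a Lipschitz bound controls all smaller scales at once. Induction then gives $a_{j,m}\ge c$ for every $m$, contradicting $a_{j,k_j}\le C/S_j$.

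The Lipschitz branch is intrinsically multi-scale. It requires continuing affine approximations through the diffuse layer at every smaller scale. This is where the De Silva--Savin two-phase alternative, the exact planar profiles with transmission maps $G_{F,\nu,\gamma,\eta}$ in their identity regime, and the finite-thickness transfer are used. A one-scale Liouville argument cannot substitute for it, because comparing with a homogeneous solution one scale at a time costs a factor $C>1$ per scale.
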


The estimate is sharp in both scale and form. Across a planar reaction layer,
the change in squared slope is comparable to
$$
\alpha\int_{-1}^{1}\beta(s)\,ds,
$$
which forces a contribution of order $\sqrt\alpha$. In
\Cref{sec:sharpness} we construct radial solutions for the Laplacian such that
$$
\frac{\|u_\eps\|_{L^\infty(B_1)}}{\sqrt\alpha}
\longrightarrow0,
\qquad
\|\nabla u_\eps\|_{L^\infty(B_{1/2})}
\geq c\sqrt\alpha.
$$
Thus the additive term in \eqref{eq:main-estimate-intro} cannot be replaced by
$o(\sqrt\alpha)$.

The content of \Cref{thm:main-intro} is not a regularity estimate for a large
but otherwise harmless source. The equation is genuinely two-phase, the
operator need not possess a variational structure, and the estimate is
obtained directly at the diffuse level without assuming a limiting interface
or a law across it. It therefore provides the compactness input that must
precede, rather than follow from, a sharp-interface analysis. This is the
principal distinction from the existing one-phase theory and from
sharp-interface two-phase regularity results.

Indeed, let $\{u_\eps\}$ be locally uniformly bounded and assume that the
strengths $\alpha$ remain uniformly bounded. Then
\Cref{thm:main-intro}, after localization, makes the family locally
equi-Lipschitz. Along a subsequence,
$$
u_\eps\longrightarrow u
\qquad\text{locally uniformly},
$$
where $u$ is locally Lipschitz and
$$
F(D^2u)=0
\qquad\text{in }
\operatorname{int}\{u>0\}\cup\operatorname{int}\{u<0\}.
$$
Thus the reaction is asymptotically confined to the emerging interface, and
all blow-up sequences lie in a scale-invariant compactness class.

The estimate removes the first and most basic obstruction to the fully
nonlinear two-phase free-boundary program. Its natural continuation is to
establish nondegeneracy for distinguished classes of solutions, determine the
transmission law selected by the approximation, classify global blow-ups, and
develop the regularity theory of the limiting interface. The one-phase
analysis in \cite{RicarteTeixeira} provides a prototype, but the two-phase
problem must also account for the interaction of the limiting slopes and for
the possible large-matrix behavior of $F$. For homogeneous concave or convex operators, the global classification
theorem of De Silva and Savin \cite{DeSilvaSavinGlobal} provides an
important sharp-interface benchmark. The remaining issue in the present
setting is to determine which global profiles are selected by the diffuse
approximation under the general structural assumptions considered here. We do not address these questions
here. The result proved in this paper supplies the uniform scale control on
which that analysis must be built.

\subsection{Architecture of the proof}

We describe the proof with emphasis on the mechanisms that replace the two
structures unavailable in the diffuse problem. Set
$$
A
:=
\|u_\eps\|_{L^\infty(B_1)}+\sqrt\alpha.
$$
After division by $A$ and the corresponding normalization of the operator,
the equation takes the form
$$
F(D^2v)
=
\frac\gamma\delta
\beta\left(\frac v\delta\right),
\qquad
\|v\|_{L^\infty(B_1)}\leq1,
\qquad
0<\gamma,\delta\leq1.
$$
The natural spatial scale of a transition of height $\delta$ and strength
$\gamma$ is
$$
R_{\gamma,\delta}
=
\frac\delta{\sqrt\gamma}.
$$
At this scale, a concave doubling modulus gives an intrinsic
Ishii--Lions estimate in the active layer. It bounds the local slope by
$\sqrt\gamma$ together with the linear growth from the boundary of a slightly
enlarged transition region. A fixed-scale modulus would not generate enough
curvature to absorb the source $\gamma/\delta$.

We encode the remaining boundary growth in a buffered level-boundary
seminorm. The buffer has two roles: it permits localization of the homogeneous
phase estimates, and it prevents maximizing points in the later blow-up
argument from drifting to the artificial boundary of the comparison domain.
Interior estimates in the two homogeneous phases, combined with the
active-layer estimate, reduce the theorem to a universal bound for this
seminorm.

The next ingredient is compactness across diffuse layers. When the effective
strength tends to zero, the source need not converge in any classical norm if
the layer thickness collapses. To recover the homogeneous limiting equation,
we modify touching test functions by small convex correctors whose curvature
in the normal direction dominates the reaction. This curved-test argument
allows viscosity inequalities to pass through the collapsing layer. A
uniform interior $C^{0,\theta}$ estimate supplies the equicontinuity needed
for the compactness step.

In the nondegenerate regime, exact one-dimensional transitions provide the
local comparison geometry. Their incoming and outgoing slopes are related by
a direction-dependent map
$$
a
=
G_{F,\nu,\gamma,\eta}(b).
$$
For a general $F$, this map may retain information about the behavior of the
operator at large matrices; accordingly, we do not assert that the diffuse
approximation selects a unique transmission law at this stage. What is
uniform is the size of the squared-slope increment and the fact that, at large
slopes, $G$ approaches the identity with quantitative control of its first two
derivatives and of its dependence on $\nu$. These are precisely the estimates
needed to invoke the De Silva--Savin decay-versus-Lipschitz alternative after
passage to a sharp-interface limit. The positive-thickness transfer of the
Lipschitz branch follows by the same compactness argument; the dependence on
$\nu$ is handled by tracking the common modulus of continuity of the slope
maps and the uniform constants in the planar barriers.

This yields a diffuse alternative: at each dyadic scale, either the normalized
oscillation decays or the solution is trapped near an affine profile with
controlled slope. The critical issue is that, once the flatness reaches the
natural $\sqrt\gamma$ floor, the available bounds for successive slope changes
need not be summable. No convergence of the affine slopes is inferred from
these bounds. Instead, the proof separates two exhaustive cases. If the
accumulated slopes remain bounded, the affine trapping on dyadic annuli gives
the Lipschitz estimate directly, regardless of whether the slopes converge.
If they are unbounded, we stop the iteration at the first scale at which the
slope $P_j$ becomes large and divide the solution by $P_j$. The effective
strength is then $\gamma_j/P_j^2\to0$. The large-slope branch of the dichotomy
propagates the normalized linear growth back to the macroscopic scale, where
it contradicts the unit-amplitude normalization. Thus the unbounded-slope
case cannot occur.

The resulting anchored dyadic continuation gives uniform linear growth from
every buffered transition point. This bounds the level-boundary seminorm; the
active-layer and homogeneous-phase estimates then yield
\eqref{eq:main-estimate-intro}. Apart from the classical interior regularity
theory for homogeneous uniformly elliptic equations and the quantitative
sharp-interface alternative of De Silva and Savin, the diffuse part of the
argument is self-contained.

\subsection{Acknowledgments, formalization, and machine assistance}
\label{subsec:ack-formalization-ai}

A.S. acknowledges support from King Abdullah University of Science and
Technology (KAUST) under Award No.~ORFS-CRG12-2024-6430. E.V.T. gratefully
acknowledges support from the Grayce B. Kerr Chair funds at Oklahoma State
University.

This research was conducted in part under the DARPA expMath project
\emph{``A Human-Centered Framework for AI-Mathematician Collaboration in
Research-Level Mathematics''} (Agreement No.\ HR0011262E029), in which E.V.T.
serves as a co-principal investigator.
The views, opinions, and/or findings expressed are those of the authors and
should not be interpreted as representing the official views or policies of
the Department of Defense or the U.S.\ Government.
We gratefully acknowledge DARPA's support and the collaborative environment
fostered by the project. We are particularly grateful to our expMath
collaborators at New York University---Daniel Arturi, Jaume de Dios, Chinmay
Hegde, Claudio Silva, and Samuel Westrick---for discussions and contributions
related to formalization, computational verification, and the development of
supporting tools.

Particularly important to our work is the \textsc{Lea} Prover, a tool under
development by our expMath team. \textsc{Lea} is designed as part of a
human-centered approach to AI-assisted mathematics: it streamlines interaction
with frontier models, including OpenAI's GPT, Anthropic's Claude, and Google's
Gemini, and serves as an auxiliary tool to compare alternative formulations,
examine logical dependencies, stress-test candidate arguments, and assist with
proofs.

At a pivotal stage of the project, we employed \textsc{Lea} to explore new approaches to the remaining difficulties. As is common with AI at its current stage of development, most of the ideas produced
were unworkable or incomplete---often locally correct, yet not truly useful. One key
concept survived, however, and turned out to be instrumental in completing the
proof: the concept of the buffered level-boundary seminorm introduced in Subsection \ref{subsec:buffered-level-boundary}. This concept emerged from those explorations and was subsequently refined, developed, and
incorporated into the argument by the authors.

Furthermore, as part of the expMath effort, the core argument underlying the
main theorem was fully formalized in Lean using \textsc{Lea}; the repository is available at \url{https://github.com/teixeiramath-maker/LipReg-TwoPhase-FullyNonlinear}. The
formalization served as a proof-auditing instrument: it made the logical
dependencies explicit and allowed the principal components of the iteration to
be checked systematically. The present manuscript provides the complete
analytic proof, with the additional detail appropriate to the viscosity
framework, and is entirely self-contained.

OpenAI's GPT was also used as auxiliary tools for
routine \LaTeX{} typesetting. All outputs were treated as provisional and
independently verified by the authors. Beyond these uses, the manuscript was
written by the authors, who made all final mathematical and expository
decisions and take full responsibility for its contents.

\subsection{Organization of the paper}

In \Cref{sec:pre} we fix the ellipticity and viscosity conventions, record the
scaling laws and interior estimates, and introduce the level-set notation.
\Cref{sec:normalization-local} reduces the theorem to its normalized form,
defines the buffered level-boundary seminorm, and proves the local estimates
in the active layer and the homogeneous phases.

In \Cref{sec:diffuse-compactness} we establish compactness across diffuse
reaction layers, construct the exact planar transitions and their slope maps,
and prove sharp-interface stability. \Cref{sec:diffuse-continuation} develops
the diffuse De Silva--Savin alternative, including the uniform H\"older
compactness estimate, the finite-thickness transfer, affine continuation, and
the first-passage large-slope argument. In \Cref{sec:completion} we apply the
resulting linear-growth estimate to the buffered level-boundary seminorm and
complete the proof of \Cref{thm:main-intro}. Finally,
\Cref{sec:sharpness} proves the optimality of the $\sqrt\alpha$ term.

\section{Preliminaries}\label{sec:pre}

\leadefinition{label=StructuralDataAndNotation, context={Use n as a natural number with 2 <= n and ellipticity parameters satisfying 0 < lambda <= Lambda. Interpret Sn as the finite-dimensional real vector space of symmetric n by n matrices. Universal constants are existential positive reals whose allowed parameter dependence is exactly the dependence stated in the paper.}}{We collect here the notation and the standard facts used throughout the
paper. We write $B_r(x_0)$ for the Euclidean ball of radius $r$ centered at
$x_0$, and $B_r:=B_r(0)$. The space of real symmetric $n\times n$ matrices is
denoted by $\Sn$. Unless otherwise indicated, constants denoted by $C$ depend
only on $n$, the ellipticity constants $\lambda$ and $\Lambda$, and the fixed
reaction profile $\beta$. We refer to such constants as universal.
}

\begin{definition}[Uniform ellipticity]\label{def:ellipticity}
A continuous operator $F:\Sn\to\mathbb R$ is
$(\lambda,\Lambda)$-elliptic if
\begin{equation}\label{eq:ellipticity}
\lambda\tr N
\leq
F(M+N)-F(M)
\leq
\Lambda\tr N
\end{equation}
for every $M,N\in\Sn$ with $N\geq0$.
\end{definition}

\begin{leatargetblock}
\leadefinition{label=ViscositySolutionConvention, uses={Uniformellipticity}, context={Use one shared viscosity subsolution and supersolution framework throughout the project. With this paper's sign convention, contact from above gives F(D^2 phi) >= f and contact from below gives F(D^2 phi) <= f; all contacts and boundaries are relative to the stated ambient domain. Treat the general viscosity framework as imported foundational analysis.}}
We adopt the convention in \eqref{eq:ellipticity} throughout. Accordingly,
if $\varphi\in C^2$ touches a viscosity solution of
$$
F(D^2u)=f
$$
from above at $x_0$, then
$$
F(D^2\varphi(x_0))\geq f(x_0),
$$
whereas contact from below gives the reverse inequality. All boundaries and
all viscosity inequalities below are understood relative to the ambient
domain unless explicitly stated otherwise; see \cite{CrandallIshiiLions}.

\end{leatargetblock}

\leadefinition{label=PucciExtremalOperators, uses={StructuralDataAndNotation}}{If $e_1,\ldots,e_n$ are the eigenvalues of $M\in\Sn$, the Pucci extremal
operators corresponding to this convention are
$$
\mathcal P^+_{\lambda,\Lambda}(M)
:=
\Lambda\sum_{e_i>0}e_i
+
\lambda\sum_{e_i<0}e_i
$$
and
$$
\mathcal P^-_{\lambda,\Lambda}(M)
:=
\lambda\sum_{e_i>0}e_i
+
\Lambda\sum_{e_i<0}e_i.
$$
}%
\lealemma{label=UniformEllipticityPucciBounds, uses={Uniformellipticity,PucciExtremalOperators}}{Uniform ellipticity is equivalently expressed by
\begin{equation}\label{eq:pucci-difference}
\mathcal P^-_{\lambda,\Lambda}(M-N)
\leq
F(M)-F(N)
\leq
\mathcal P^+_{\lambda,\Lambda}(M-N)
\end{equation}
for all $M,N\in\Sn$. Since all operators considered in the paper are
normalized by $F(0)=0$, it follows in particular that
\begin{equation}\label{eq:pucci-control}
\mathcal P^-_{\lambda,\Lambda}(M)
\leq
F(M)
\leq
\mathcal P^+_{\lambda,\Lambda}(M).
\end{equation}
}%
\begin{leatargetcontinuation}\leatheorem{label=EllipticOperatorCompactnessAndViscosityStability, uses={UniformEllipticityPucciBounds,ViscositySolutionConvention}, context={Treat local compactness of normalized uniformly elliptic operators and stability of viscosity solutions under locally uniform convergence as imported foundational results. Use the topology of locally uniform convergence on the finite-dimensional space of symmetric matrices.}}The inequalities in \eqref{eq:pucci-difference} also imply that normalized
$(\lambda,\Lambda)$-elliptic operators form a locally compact family for the
topology of locally uniform convergence on $\Sn$. We shall use this elementary
compactness, together with the standard stability of viscosity solutions,
without further comment.

\end{leatargetcontinuation}

\leadefinition{label=SingularPerturbationRescaling, uses={StructuralDataAndNotation}}{We record explicitly the scaling convention. Given $x_0\in\mathbb R^n$,
$r,A>0$, and $c\in\mathbb R$, set
\begin{equation}\label{eq:general-rescaling}
v(x):=\frac{u(x_0+rx)-c}{A},
\qquad
F_{r,A}(M):=\frac{r^2}{A}
F\left(\frac{A}{r^2}M\right).
\end{equation}
}%
\lealemma{label=lemSingularPerturbationRescaling, uses={SingularPerturbationRescaling,Uniformellipticity,SingularPerturbationProblemData}}{Then $F_{r,A}(0)=0$ and $F_{r,A}$ has the same ellipticity constants as $F$.
If
$$
F(D^2u)
=
\frac{\alpha}{\eps}
\beta\left(\frac{u}{\eps}\right),
$$
then $v$ satisfies
\begin{equation}\label{eq:rescaled-singular-equation}
F_{r,A}(D^2v)
=
\frac{\gamma}{\delta}
\beta\left(\frac{v+\sigma}{\delta}\right),
\qquad
\gamma:=\frac{r^2\alpha}{A^2},
\quad
\delta:=\frac{\eps}{A},
\quad
\sigma:=\frac{c}{A}.
\end{equation}
Thus spatial and amplitude rescalings preserve the class of equations, while
the effective strength transforms as $\gamma=r^2\alpha/A^2$. This identity
will be used repeatedly in the blow-up arguments.
}

\leatheorem{label=InteriorC1AlphaEstimate, uses={Uniformellipticity,ViscositySolutionConvention,PointwiseUpperLipschitz}, context={This is the classical interior C^{1,alpha} estimate for uniformly elliptic viscosity equations from Caffarelli--Cabre. Treat it, including its homogeneous scale-invariant affine-approximation form, as an imported analytic theorem rather than reproving it in this project.}}{The classical regularity input used throughout the proof is the interior
$C^{1,\bar\alpha}$ estimate for uniformly elliptic equations; see
\cite{CaffarelliCabre}. There exist $\bar\alpha\in(0,1)$ and $C<\infty$,
depending only on $n$, $\lambda$, and $\Lambda$, such that every viscosity
solution of
$$
F(D^2h)=f
\qquad\text{in }B_1
$$
satisfies
\begin{equation}\label{eq:interior-gradient-estimate}
\norm{\nabla h}_{L^\infty(B_{1/2})}
+
[\nabla h]_{C^{0,\bar\alpha}(B_{1/2})}
\leq
C\left(
\norm{h}_{L^\infty(B_1)}
+
\norm{f}_{L^\infty(B_1)}
\right).
\end{equation}
In the homogeneous case, the scale-invariant form of the same estimate gives
\begin{equation}\label{eq:homogeneous-affine-approximation}
\norm{
h-h(x_0)-\nabla h(x_0)\cdot(x-x_0)
}_{L^\infty(B_\rho(x_0))}
\leq
C\rho^{1+\bar\alpha}\osc_{B_1}h
\end{equation}
for $x_0\in B_{1/2}$ and $0<\rho\leq1/4$, whenever
$F(D^2h)=0$ in $B_1$.
}

\leadefinition{label=TransitionRegion, uses={StructuralDataAndNotation}}{Finally, let $v\in C(\Omega)$ and $a>0$. We denote its transition region at
height $a$ and the corresponding level boundary by
\begin{equation}\label{eq:layer-def}
\TT_a(v;\Omega)
:=
\{x\in\Omega:|v(x)|<a\},
\qquad
\GG_a(v;\Omega)
:=
\partial_\Omega\TT_a(v;\Omega).
\end{equation}
When the ambient domain is clear, it will be suppressed from the notation.
}%
\leadefinition{label=BoundaryLipschitzSeminorm, uses={StructuralDataAndNotation}}{For $E\subset\Omega$, we define the boundary-to-domain Lipschitz seminorm
\begin{equation}\label{eq:bdry-lip-def}
[v]_{\Lip(E;\Omega)}
:=
\sup_{z\in E}
\sup_{x\in\Omega\setminus\{z\}}
\frac{|v(x)-v(z)|}{|x-z|}.
\end{equation}
Fixing one endpoint on $E$ makes this quantity particularly suited to
measuring linear growth away from a transition boundary.
}%
\leadefinition{label=PointwiseUpperLipschitz, uses={StructuralDataAndNotation}, context={For every theorem whose hypotheses initially give only continuity, formalize a displayed L-infinity gradient conclusion as the corresponding Lipschitz or pointwise upper-Lipschitz bound. Once Lipschitz regularity is established, the almost-everywhere gradient formulation follows separately.}}{We shall also use the pointwise upper Lipschitz constant
\begin{equation}\label{eq:point-lip-def}
\Lip_\Omega v(x_0)
:=
\limsup_{\substack{x\to x_0\\ x\in\Omega\setminus\{x_0\}}}
\frac{|v(x)-v(x_0)|}{|x-x_0|}.
\end{equation}
At every point of differentiability of $v$, one has
$\Lip_\Omega v(x_0)=|\nabla v(x_0)|$.
}

\section{Normalization and local estimates}
\label{sec:normalization-local}

We first reduce the main theorem to a unit-amplitude estimate. We then control
the gradient separately in the transition region and in the two homogeneous
phases. These local estimates show that the proof reduces to linear growth
from the boundary of a fixed enlargement of the reaction layer.

\subsection{Normalization}

The normalized statement is as follows.

\begin{theorem}[Normalized Lipschitz estimate]\label{thm:normalized}
Let $F\colon \Sn\to\mathbb R$ be continuous and
$(\lambda,\Lambda)$-elliptic, with $F(0)=0$. Suppose that
$0<\gamma\leq1$, $0<\delta\leq1$, and that $v\in C(B_1)$ is a viscosity
solution of
\leadefinition{label=NormalizedDiffuseProblem, uses={StructuralDataAndNotation,Uniformellipticity,ViscositySolutionConvention,SingularPerturbationProblemData}, context={This target packages the normalized diffuse equation together with the surrounding assumptions in NormalizedEstimate: F is continuous, (lambda,Lambda)-elliptic, F(0)=0, 0 < gamma <= 1, 0 < delta <= 1, v is a continuous viscosity solution on B_1, and ||v||_{L-infinity(B_1)} <= 1.}}{\begin{equation}\label{eq:normalized}
F(D^2v)
=
\frac{\gamma}{\delta}
\beta\left(\frac{v}{\delta}\right)
\qquad\text{in }B_1,
\qquad
\norm{v}_{L^\infty(B_1)}\leq1.
\end{equation}
}
Then
\begin{equation}\label{eq:normalized-bound}
\norm{\nabla v}_{L^\infty(B_{1/2})}
\leq C.
\end{equation}
\end{theorem}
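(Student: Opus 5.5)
The proof follows the architecture sketched in the introduction. The statement is invariant under the rescaling of \Cref{lemSingularPerturbationRescaling}, and the reaction lives only in $\TT_\delta(v)$. So I split $B_{1/2}$ into three regions: the homogeneous phases $\{|v|\geq 2\delta\}$, the active layer $\TT_{2\delta}(v)$, and the interface between them. I control the gradient in each by a single quantity, the linear growth of $v$ from a buffered level boundary. Concretely, fix a universal buffer $\kappa>1$, say $\kappa=2$, and set
$$
L:=[v]_{\Lip(\GG_{\kappa\delta}(v;B_{3/4})\cap B_{5/8};\,B_{3/4})}.
$$
The plan is to prove three things in order: an active-layer estimate, a homogeneous-phase estimate, and a universal bound $L\leq C$.

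\emph{Active-layer estimate.} At $x_0\in\TT_{\kappa\delta}$ I would apply an Ishii--Lions doubling argument with a concave modulus $\omega(t)=Kt-\tfrac{K\sqrt\gamma}{\delta}t^2$. This lives at the intrinsic scale $R_{\gamma,\delta}=\delta/\sqrt\gamma$, so the curvature of $\omega$ absorbs the source bound $\gamma/\delta\cdot\|\beta\|_\infty$. The doubling is localized to the connected piece of $\TT_{\kappa\delta}$ containing $x_0$, with boundary data controlled by $L$. The target bound is
$$
\Lip v(x_0)\leq C(\sqrt\gamma+L).
$$

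\emph{Homogeneous-phase estimate.} At $x_0$ with $|v(x_0)|\geq\kappa\delta$, let $d=\dist(x_0,\GG_{\kappa\delta})$. On $B_d(x_0)$ the function $v$ keeps one sign with $|v|>\delta$, so it solves $F(D^2v)=0$ there. Because $|v-v(z)|\leq L|x-z|$ for a nearest boundary point $z$, the oscillation of $v$ on $B_d(x_0)$ is at most $CLd$. The scaled estimate \eqref{eq:interior-gradient-estimate} then gives $|\nabla v(x_0)|\leq CL$. When $d$ is comparable to $1$, I use $\|v\|_\infty\leq1$ instead. Together with the active-layer estimate this gives
$$
\|\nabla v\|_{L^\infty(B_{1/2})}\leq C(1+L),
$$
so the theorem reduces to $L\leq C$.

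\emph{Bounding $L$.} This is the main obstacle. I would argue by a dyadic iteration centered at a point $z\in\GG_{\kappa\delta}$. The first ingredient is compactness across layers. I prove a uniform $C^{0,\theta}$ estimate, and I use curved convex correctors on the test functions so that viscosity inequalities pass through collapsing layers when $\gamma\to0$. In the nondegenerate regime, limits are governed by exact planar transitions with slope maps $a=G(b)$. At each scale $r_j=2^{-j}$ this yields a dichotomy: either $\osc_{B_{r_j}(z)}v$ decays geometrically, or $v$ is within $o(r_j)$ of a two-plane profile with slope $P_j$. This is the De Silva--Savin alternative transferred to positive thickness.

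If the slopes $P_j$ stay bounded, the affine trapping on dyadic annuli gives $|v(x)-v(z)|\leq C|x-z|$ directly. If they are unbounded, I stop at the first $j$ with $P_j\geq M$ and divide by $P_j$. The rescaled strength is $\gamma_j/P_j^2\to0$, so the limit solves a homogeneous problem. The large-slope branch then propagates linear growth of size at least $P_j$ back to scale $1$, which contradicts $\|v\|\leq1$ once $M$ is chosen large.

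The delicate points are uniformity in $F$ of the slope maps $G$, and keeping maximizing points away from the artificial boundary. The buffer $\kappa$ is what prevents that drift.
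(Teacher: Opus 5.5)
Your proposal follows the paper's proof essentially step for step. You reduce to a buffered level-boundary seminorm via an active-layer estimate at the intrinsic scale $R_{\gamma,\delta}=\delta/\sqrt\gamma$ and a homogeneous-phase estimate, then bound that seminorm through the diffuse De Silva--Savin alternative, where bounded accumulated slopes give linear growth directly and unbounded slopes are excluded by first-passage normalization ($\gamma/P_j^2\to0$) and backward propagation to the unit scale.

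The deviations are harmless. Your unweighted seminorm over $z\in B_{5/8}$ works because the growth estimate is then applied at a fixed scale, so the paper's weight $2/3-|z|$ is not needed. In the layer, the paper simply applies the interior $C^{1,\bar\alpha}$ estimate on a ball of radius $\min(d/2,R_{\gamma,\delta})$ rather than an Ishii--Lions doubling. Your layer bound $C(\sqrt\gamma+L)$, however, only holds when $R_{\gamma,\delta}$ is small: with an empty level boundary one has $L=0$ while $|\nabla v|$ can be of order one. When $R_{\gamma,\delta}$ is not small, replace it by $C$, which follows from the interior estimate since the source is then bounded.
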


\begin{proposition}[Reduction to the normalized problem]
\label{prop:normalization}
\Cref{thm:normalized} implies \Cref{thm:main-intro}.
\end{proposition}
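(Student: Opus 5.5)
The plan is a direct rescaling by the natural amplitude, using \Cref{lemSingularPerturbationRescaling} with $x_0=0$, $r=1$, $c=0$, together with a few degenerate cases that are settled by the classical estimate \eqref{eq:interior-gradient-estimate}. Let $u_\eps$ solve \eqref{eq:singular-perturbation-intro} and set
$$
A:=\|u_\eps\|_{L^\infty(B_1)}+\sqrt\alpha .
$$

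\emph{Degenerate cases.}
\begin{itemize}
\item If $A=0$, then $u_\eps\equiv0$ and $\alpha=0$, so there is nothing to prove. Assume $A>0$ from now on.
\item If $\alpha=0$, the equation is $F(D^2u_\eps)=0$. Then \eqref{eq:interior-gradient-estimate} with $f\equiv0$ gives $\|\nabla u_\eps\|_{L^\infty(B_{1/2})}\le C\|u_\eps\|_{L^\infty(B_1)}$, which is already \eqref{eq:main-estimate-intro}. This case is separated only because \Cref{thm:normalized} requires $\gamma>0$.
\end{itemize}

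\emph{Main case: $\alpha>0$ and $\eps\le A$.} Define
$$
v:=u_\eps/A,\qquad F_{1,A}(M)=A^{-1}F(AM).
$$
By \Cref{lemSingularPerturbationRescaling}, $F_{1,A}$ is continuous, $(\lambda,\Lambda)$-elliptic and satisfies $F_{1,A}(0)=0$. Moreover $v$ is a viscosity solution of
$$
F_{1,A}(D^2v)=\frac\gamma\delta\beta\left(\frac v\delta\right),\qquad \gamma=\frac{\alpha}{A^2},\quad \delta=\frac{\eps}{A}.
$$
The parameters lie in the required range: $\|v\|_{L^\infty(B_1)}\le1$, $0<\gamma\le1$ because $\sqrt\alpha\le A$, and $0<\delta\le1$ by the case assumption. \Cref{thm:normalized} then gives $\|\nabla v\|_{L^\infty(B_{1/2})}\le C$, with $C$ depending only on $n,\lambda,\Lambda,\beta$ and not on the operator. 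Multiplying by $A$ yields \eqref{eq:main-estimate-intro}.

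\emph{Remaining case: $\alpha>0$ and $\eps>A$.} Here $\delta>1$, which the normalized theorem does not cover, and I expect this to be the only point needing care. In this regime the source is harmless:
$$
\frac\alpha\eps\,\beta\left(\frac{u_\eps}{\eps}\right)\le\frac{A^2}{\eps}\|\beta\|_\infty\le A\|\beta\|_\infty .
$$
So \eqref{eq:interior-gradient-estimate}, applied to $u_\eps$ itself with $f=\frac\alpha\eps\beta(u_\eps/\eps)$, which is continuous and bounded, gives
$$
\|\nabla u_\eps\|_{L^\infty(B_{1/2})}\le C\big(\|u_\eps\|_{L^\infty(B_1)}+A\|\beta\|_\infty\big)\le C(1+\|\beta\|_\infty)\,A .
$$
This constant is universal and is independent of $F$, since the $C^{1,\bar\alpha}$ estimate depends only on $n,\lambda,\Lambda$.

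Combining the cases proves \Cref{thm:main-intro}. The only subtleties are this regime split at $\delta=1$ and the observation that all constants are insensitive to the renormalized operator $F_{1,A}$.
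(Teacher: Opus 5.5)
Your proof is correct and follows the paper's argument essentially verbatim. It uses the same normalization $v=u_\eps/A$ with $\gamma=\alpha/A^2$ and $\delta=\eps/A$, and the same separate handling of the cases $A=0$, $\alpha=0$, and $\delta>1$, the last settled by \eqref{eq:interior-gradient-estimate} because the source is then bounded. It then applies \Cref{thm:normalized} when $\delta\le1$; the only cosmetic difference is that for $\delta>1$ you apply the interior estimate to $u_\eps$ directly rather than to $v$, which is equivalent.
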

\begin{proof}
Let
$$
A
:=
\norm{u_\eps}_{L^\infty(B_1)}+\sqrt\alpha.
$$
If $A=0$, then $u_\eps\equiv0$ and there is nothing to prove. Assume
$A>0$ and set
$$
v:=\frac{u_\eps}{A},
\qquad
G(M):=\frac1A F(AM),
\qquad
\delta:=\frac\eps A,
\qquad
\gamma:=\frac\alpha{A^2}.
$$
This is the rescaling \eqref{eq:general-rescaling} with $r=1$ and $c=0$.
Consequently, $G(0)=0$, the operator $G$ has the same ellipticity constants as
$F$, and
\begin{equation}\label{eq:normalized-rescaling}
G(D^2v)
=
\frac{\gamma}{\delta}
\beta\left(\frac{v}{\delta}\right)
\qquad\text{in }B_1.
\end{equation}
Moreover,
$$
\norm{v}_{L^\infty(B_1)}\leq1,
\qquad
0\leq\gamma\leq1.
$$

If $\alpha=0$, equation \eqref{eq:normalized-rescaling} is homogeneous and
the conclusion follows directly from \eqref{eq:interior-gradient-estimate}.
We may therefore assume that $\gamma>0$. If $\delta\leq1$, we apply
\Cref{thm:normalized} to $v$. If $\delta>1$, then
$$
\left\|
\frac{\gamma}{\delta}
\beta\left(\frac{v}{\delta}\right)
\right\|_{L^\infty(B_1)}
\leq
\norm{\beta}_{L^\infty(\mathbb R)},
$$
and \eqref{eq:interior-gradient-estimate} again gives
$\norm{\nabla v}_{L^\infty(B_{1/2})}\leq C$. Since $u_\eps=Av$, both cases
yield
$$
\norm{\nabla u_\eps}_{L^\infty(B_{1/2})}
\leq
C\left(
\norm{u_\eps}_{L^\infty(B_1)}+\sqrt\alpha
\right),
$$
which is \eqref{eq:main-estimate-intro}.
\end{proof}

\subsection{The buffered level-boundary seminorm}\label{subsec:buffered-level-boundary}

\leadefinition{label=BufferedBoundarySeminorm, uses={NormalizedDiffuseProblem,TransitionRegion,BoundaryLipschitzSeminorm}}{For definiteness, fix $c_0:=2$. Since
$\operatorname{supp}\beta\Subset(-1,1)$, the equation in
\eqref{eq:normalized} is homogeneous on $\{|v|\geq\delta\}$; the larger set
$\{|v|<c_0\delta\}$ therefore provides a fixed buffer around the active
reaction layer. In the notation of \eqref{eq:layer-def}, set
$$
\Gamma_\delta(v)
:=
\GG_{c_0\delta}(v;B_1)
=
\partial_{B_1}\{|v|<c_0\delta\}.
$$
We shall measure growth from this level boundary by
\begin{equation}\label{eq:Bint}
\mathcal B(v)
:=
\sup_{z\in\Gamma_\delta(v)\cap B_{2/3}}
\sup_{x\in B_{3/4}\setminus\{z\}}
\left(\frac23-|z|\right)
\frac{|v(x)-v(z)|}{|x-z|},
\end{equation}
with the convention that the supremum is zero when
$\Gamma_\delta(v)\cap B_{2/3}=\varnothing$.
}

The weight $2/3-|z|$ has two complementary roles. On $B_{7/12}$ it is
bounded below by $1/12$, so \eqref{eq:Bint} gives an unweighted linear-growth
estimate near every level-boundary point relevant to $B_{1/2}$. In the later
blow-up argument, the same weight forces the selected scale to be negligible
relative to the distance of the base point from $\partial B_{2/3}$, thereby
preventing the rescaled domains from seeing this artificial boundary.

\lealemma{label=ScaledInteriorGradientEstimate, uses={InteriorC1AlphaEstimate,PointwiseUpperLipschitz}, context={Derive this by translating and scaling the imported unit-ball interior C^{1,alpha} estimate.}}{We shall repeatedly use the following scaled consequence of
\eqref{eq:interior-gradient-estimate}: if
$$
F(D^2w)=f
\qquad\text{in }B_r(x_0)\Subset B_{3/4},
$$
then
\begin{equation}\label{eq:scaled-interior-gradient}
\Lip_{B_{3/4}}w(x_0)
\leq
C\left(
\frac{\osc_{B_r(x_0)}w}{r}
+
r\norm{f}_{L^\infty(B_r(x_0))}
\right).
\end{equation}
}

\subsection{The transition region}

\leadefinition{label=IntrinsicTransitionRadius, uses={StructuralDataAndNotation}}{The intrinsic radius of a transition of height $\delta$ and strength
$\gamma$ is
$$
R_{\gamma,\delta}
:=
\frac{\delta}{\sqrt\gamma}.
$$
Indeed, an oscillation of size $\delta$ over a ball of radius $R$ produces a
second-order contribution of size $\delta/R^2$, which balances the forcing
$\gamma/\delta$ precisely when $R=R_{\gamma,\delta}$. The corresponding
first-order scale is $\delta/R_{\gamma,\delta}=\sqrt\gamma$.
}

\begin{lemma}[Transition-region estimate]\label{lem:active}
Let $v$ satisfy \eqref{eq:normalized}. Then
\begin{equation}\label{eq:active}
\sup_{x\in B_{1/2}\cap\{|v|\leq c_0\delta\}}
\Lip_{B_{3/4}}v(x)
\leq
C\left(1+\mathcal B(v)\right).
\end{equation}
More precisely, there exists a universal $r_*>0$ such that, whenever
$R_{\gamma,\delta}\leq r_*$,
\begin{equation}\label{eq:active-microscopic}
\sup_{x\in B_{1/2}\cap\{|v|\leq c_0\delta\}}
\Lip_{B_{3/4}}v(x)
\leq
C\left(\sqrt\gamma+\mathcal B(v)\right).
\end{equation}
\end{lemma}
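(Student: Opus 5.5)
The plan is to split according to whether the intrinsic radius $R_{\gamma,\delta}=\delta/\sqrt\gamma$ is macroscopic or microscopic, and to apply the scaled interior estimate \eqref{eq:scaled-interior-gradient} at a suitable radius in each case. Fix a universal $r_*\le 1/12$.

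\emph{Macroscopic case, $R_{\gamma,\delta}>r_*$.} Here $\gamma>\delta^2/r_*^2$ is not what matters; what matters is the size of the source. Since $\gamma\le\delta^2/\,\cdot$ fails in general, we argue directly: $R_{\gamma,\delta}>r_*$ means $\gamma<\delta^2/r_*^2$, so
$$
\frac{\gamma}{\delta}\,\|\beta\|_{L^\infty}\le \frac{\delta}{r_*^2}\,\|\beta\|_{L^\infty}\le C .
$$
The right-hand side of \eqref{eq:normalized} is therefore bounded. Since $\|v\|_{L^\infty(B_1)}\le1$, \eqref{eq:scaled-interior-gradient} with $r=1/4$ gives $\Lip_{B_{3/4}}v(x)\le C$ for all $x\in B_{1/2}$. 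This already yields \eqref{eq:active}.

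\emph{Microscopic case, $R:=R_{\gamma,\delta}\le r_*$.} Fix $x_0\in B_{1/2}$ with $|v(x_0)|\le c_0\delta$. Then $B_{2R}(x_0)\Subset B_{3/4}$ and $B_{2R}(x_0)\subset B_{2/3}$. Apply \eqref{eq:scaled-interior-gradient} with $r=R$. The source term contributes
$$
R\cdot\frac{\gamma}{\delta}\,\|\beta\|_\infty=C\sqrt\gamma .
$$
So everything reduces to the oscillation bound
$$
\osc_{B_R(x_0)}v\le C\bigl(\delta+R\,\mathcal B(v)\bigr),
$$
because $\delta/R=\sqrt\gamma$. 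We obtain it from three exhaustive cases.

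\begin{enumerate}
\item If $B_R(x_0)\subset\{|v|<c_0\delta\}$, then trivially $\osc_{B_R(x_0)} v\le 2c_0\delta$.
\item If $B_R(x_0)$ contains both a point $y$ of $\TT_{c_0\delta}$ and a point outside it, pick such a $y$ and a point $y'\in B_R(x_0)$ with $|v(y')|\ge c_0\delta$. Moving along the segment from $y$ to $y'$ (which stays in the convex ball), continuity gives a first point $z$ not in $\TT_{c_0\delta}$. This $z$ lies in $\Gamma_\delta(v)\cap B_R(x_0)$, hence in $\Gamma_\delta(v)\cap B_{2/3}$. For such $z$ the weight satisfies $2/3-|z|\ge1/12$. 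The definition \eqref{eq:Bint} then gives $|v(x)-v(z)|\le 12\,\mathcal B(v)\,|x-z|\le 24R\,\mathcal B(v)$ for $x\in B_R(x_0)$. Hence $\osc_{B_R(x_0)} v\le 48R\,\mathcal B(v)$.
\item If $B_R(x_0)$ does not meet $\TT_{c_0\delta}$, then $|v|\ge c_0\delta$ on the ball, while $|v(x_0)|\le c_0\delta$. So $v$ has a constant sign there and attains an interior extremum $\pm c_0\delta$ at $x_0$. The equation is homogeneous on $B_R(x_0)$, since $\beta$ vanishes at arguments of modulus $\ge c_0>1$. By \eqref{eq:pucci-control}, $v$ is a solution of the Pucci inequalities there. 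The strong maximum principle then forces $v$ to be constant near $x_0$, so $\Lip v(x_0)=0$.
\end{enumerate}

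Combining the three cases gives $\Lip_{B_{3/4}}v(x_0)\le C(\sqrt\gamma+\mathcal B(v))$, which is \eqref{eq:active-microscopic}. Since $\gamma\le1$, this also gives \eqref{eq:active} in the microscopic case.

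The main point needing care is the choice of the comparison radius. It must be exactly the intrinsic radius $R_{\gamma,\delta}$, so that the $\delta$-size oscillation and the $\gamma/\delta$ source both scale to $\sqrt\gamma$. It must also stay small relative to $\partial B_{2/3}$, so that the weight in \eqref{eq:Bint} is bounded below at the level-boundary point $z$ found inside the ball; this is where $r_*\le1/12$ is used. If the simple three-case argument turned out insufficient (for instance, for more delicate points on $\{|v|=c_0\delta\}$), I would fall back on the Ishii--Lions doubling with a concave modulus at scale $R_{\gamma,\delta}$ mentioned in the introduction. I do not expect this to be necessary.
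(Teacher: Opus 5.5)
Your argument is correct and follows the paper's proof: the same split at the intrinsic radius $R_{\gamma,\delta}$, the scaled interior estimate at that radius so that both the $\delta$-size oscillation and the source contribute $\sqrt\gamma$, and control of the remaining oscillation by $\mathcal B(v)$ at a level-boundary point where the weight $2/3-|z|$ is bounded below. The differences are minor, and one of them is an improvement. You always work on $B_R(x_0)$ and find a point of $\Gamma_\delta(v)$ inside it by a segment argument, whereas the paper takes the nearest point at distance $d$ and uses the ball $B_{d/2}(x_0)$. Your case (3), via the strong maximum principle, explicitly handles points with $|v(x_0)|=c_0\delta$ that lie off $\Gamma_\delta(v)$; the paper's reduction to $|v(x_0)|<c_0\delta$ leaves these implicit. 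The opening two sentences of your macroscopic case are garbled and should be deleted, but the bound $\gamma/\delta<\delta/r_*^2\le r_*^{-2}$ that follows is correct.
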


\begin{proof}
Set $R:=R_{\gamma,\delta}$ and fix
$x_0\in B_{1/2}\cap\{|v|\leq c_0\delta\}$. We take $r_*:=1/8$.

Suppose first that $R\geq r_*$. Since $\sqrt\gamma\leq1$,
$$
\frac{\gamma}{\delta}
=
\frac{\sqrt\gamma}{R}
\leq
\frac1{r_*}.
$$
The right-hand side of \eqref{eq:normalized} is therefore universally
bounded. Applying \eqref{eq:scaled-interior-gradient} on a fixed ball centered
at $x_0$ gives
$$
\Lip_{B_{3/4}}v(x_0)\leq C.
$$

Assume now that $R<r_*$. If $x_0\in\Gamma_\delta(v)$, then
\eqref{eq:Bint} and $2/3-|x_0|\geq1/6$ give directly
$$
\Lip_{B_{3/4}}v(x_0)
\leq
C\mathcal B(v).
$$
We may thus suppose that $|v(x_0)|<c_0\delta$, and set
$$
d:=\dist\bigl(x_0,\Gamma_\delta(v)\bigr),
$$
with $d=+\infty$ if the level boundary is empty.

If $d\leq R$, choose $z\in\Gamma_\delta(v)$ with $|x_0-z|=d$. Since
$x_0\in B_{1/2}$ and $R<1/8$, we have $z\in B_{5/8}\subset B_{2/3}$ and
$$
\frac23-|z|\geq\frac1{24}.
$$
For every $y\in B_{d/2}(x_0)$, \eqref{eq:Bint} yields
$$
|v(y)-v(z)|
\leq
C\mathcal B(v)|y-z|
\leq
C\mathcal B(v)d.
$$
Consequently,
$$
\osc_{B_{d/2}(x_0)}v
\leq
C\mathcal B(v)d.
$$
Using \eqref{eq:scaled-interior-gradient} and $d\leq R$, we obtain
$$
\Lip_{B_{3/4}}v(x_0)
\leq
C\mathcal B(v)
+
Cd\frac{\gamma}{\delta}
\leq
C\left(\mathcal B(v)+\sqrt\gamma\right).
$$

If $d>R$, continuity implies that $B_R(x_0)$ lies in the same component of
$\{|v|<c_0\delta\}$ as $x_0$. Hence
$$
\osc_{B_R(x_0)}v
\leq
2c_0\delta.
$$
Applying \eqref{eq:scaled-interior-gradient} on a concentric ball of radius
comparable to $R$ gives
$$
\Lip_{B_{3/4}}v(x_0)
\leq
C\left(
\frac{\delta}{R}
+
R\frac{\gamma}{\delta}
\right)
=
C\sqrt\gamma.
$$
This proves \eqref{eq:active-microscopic}; together with the case
$R\geq r_*$, it also proves \eqref{eq:active}.
\end{proof}

\subsection{The homogeneous phases}

Outside the enlarged transition region the reaction vanishes. The gradient
there is controlled by the oscillation on a ball whose radius is comparable
to the distance from $\Gamma_\delta(v)$; the seminorm $\mathcal B(v)$ provides
exactly the required oscillation bound.

\begin{lemma}[Homogeneous-phase estimate]\label{lem:phases}
Let $v$ satisfy \eqref{eq:normalized}. Then
\begin{equation}\label{eq:phases}
\sup_{x\in B_{1/2}\cap\{|v|>c_0\delta\}}
\Lip_{B_{3/4}}v(x)
\leq
C\left(1+\mathcal B(v)\right).
\end{equation}
\end{lemma}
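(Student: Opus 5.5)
Fix $x_0\in B_{1/2}$ with $|v(x_0)|>c_0\delta$. The aim is to apply the scaled interior estimate \eqref{eq:scaled-interior-gradient} with $f\equiv0$ on a ball around $x_0$ where the equation is homogeneous, with radius comparable to the distance to the level boundary. Set
$$
d:=\dist\bigl(x_0,\Gamma_\delta(v)\bigr)\in(0,+\infty].
$$
We split according to whether $d$ is at least a fixed universal size, say $d\geq1/8$, or smaller.

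\emph{Case $d\geq1/8$.} The ball $B_{1/8}(x_0)\subset B_{5/8}\Subset B_{3/4}$ contains no point of $\Gamma_\delta(v)$. Hence, by connectedness and continuity, $B_{1/8}(x_0)$ lies entirely in $\{|v|>c_0\delta\}$. Indeed, a path inside the ball from $x_0$ to a point with $|v|\leq c_0\delta$ would have to cross $\partial_{B_1}\{|v|<c_0\delta\}$ or a point where $|v|=c_0\delta$, and such a point lies in the closure of the transition region, hence in $\Gamma_\delta(v)$. On this ball $\beta(v/\delta)=0$, since $\operatorname{supp}\beta\subset(-1,1)$ and $c_0=2$. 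Thus $F(D^2v)=0$ there, and \eqref{eq:scaled-interior-gradient} together with $\|v\|_{L^\infty}\leq1$ gives $\Lip_{B_{3/4}}v(x_0)\leq C$.

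\emph{Case $d<1/8$.} Choose $z\in\Gamma_\delta(v)$ with $|x_0-z|=d$ (the set $\Gamma_\delta(v)$ is relatively closed in $B_1$ and $d$ is small, so a minimizer exists). Then $|z|<1/2+1/8=5/8$, so $z\in B_{2/3}$ and $2/3-|z|\geq1/24$. By the same connectedness argument, the ball $B_{d}(x_0)$ avoids $\{|v|\leq c_0\delta\}$, so $F(D^2v)=0$ there. For $y\in B_{d/2}(x_0)\subset B_{3/4}$, the definition \eqref{eq:Bint} gives
$$
|v(y)-v(z)|\leq 24\,\mathcal B(v)\,|y-z|\leq 36\,\mathcal B(v)\,d,
$$
hence $\osc_{B_{d/2}(x_0)}v\leq C\mathcal B(v)d$. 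Applying \eqref{eq:scaled-interior-gradient} with $r=d/2$ and $f=0$ then yields $\Lip_{B_{3/4}}v(x_0)\leq C\mathcal B(v)$.

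Combining the two cases proves \eqref{eq:phases}. The only delicate point is the topological claim that a ball around $x_0$ not meeting $\Gamma_\delta(v)$ stays in the open set $\{|v|>c_0\delta\}$. This uses that $\Gamma_\delta(v)$ is the relative boundary in $B_1$ of $\{|v|<c_0\delta\}$, and that the level set $\{|v|=c_0\delta\}$ near any such point is adjacent to $\{|v|<c_0\delta\}$ or is itself in its closure. To avoid subtleties when $\{|v|=c_0\delta\}$ has interior, I would instead argue directly. Along the segment from $x_0$ to any point $y$ of the ball, let $t_*$ be the first time with $|v|\leq c_0\delta$. If $|v|<c_0\delta$ somewhere on the segment, the boundary of $\{|v|<c_0\delta\}$ is crossed. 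Otherwise $|v|\geq c_0\delta>\delta$ on the whole ball anyway. In either situation the reaction term $\beta(v/\delta)$ vanishes on the ball, which is all that is needed for homogeneity.
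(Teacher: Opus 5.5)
Your proof is correct and is essentially the paper's argument. You split according to the distance from $x_0$ to the layer, and use the interior estimate with $\osc\leq 2$ when that distance is universally large. Otherwise you bound the oscillation on $B_{d/2}(x_0)$ through $\mathcal B(v)$ at a nearest point $z\in\Gamma_\delta(v)\cap B_{2/3}$.

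The only variation is that you measure distance to $\Gamma_\delta(v)$ directly, rather than to the boundary of the component of $B_{3/4}\cap\{|v|>c_0\delta\}$ containing $x_0$. For this, the connectedness argument you settle on is the right one: it gives $|v|\geq c_0\delta>\delta$ on the ball, which suffices for homogeneity. Your first version, asserting that every point with $|v|=c_0\delta$ lies in the closure of $\{|v|<c_0\delta\}$, is not justified as stated.
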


\begin{proof}
Fix $x_0\in B_{1/2}\cap\{|v|>c_0\delta\}$, and let $D$ be the connected
component of
$$
B_{3/4}\cap\{|v|>c_0\delta\}
$$
containing $x_0$. Since $c_0>1$ and
$\operatorname{supp}\beta\subset(-1,1)$,
$$
F(D^2v)=0
\qquad\text{in }D.
$$
Set $d:=\dist(x_0,\partial D)$.

If $d\geq1/12$, then $B_{1/12}(x_0)\subset D$, and
\eqref{eq:scaled-interior-gradient}, together with
$\norm{v}_{L^\infty(B_1)}\leq1$, gives
$$
\Lip_{B_{3/4}}v(x_0)\leq C.
$$

Suppose that $d<1/12$, and choose $z\in\partial D$ with $|x_0-z|=d$.
Because $x_0\in B_{1/2}$,
$$
|z|<\frac12+\frac1{12}=\frac7{12}.
$$
Thus $z\notin\partial B_{3/4}$ and
$z\in\Gamma_\delta(v)\cap B_{2/3}$. For every
$y\in B_{d/2}(x_0)$, \eqref{eq:Bint} gives
$$
|v(y)-v(z)|
\leq
C\mathcal B(v)d,
$$
and therefore
$$
\osc_{B_{d/2}(x_0)}v
\leq
C\mathcal B(v)d.
$$
The homogeneous form of \eqref{eq:scaled-interior-gradient} now yields
$$
\Lip_{B_{3/4}}v(x_0)
\leq
C\mathcal B(v).
$$
The two cases prove \eqref{eq:phases}.
\end{proof}

The preceding estimates reduce the normalized theorem to a uniform bound for
$\mathcal B(v)$.

\begin{corollary}[Reduction to transition-boundary growth]
\label{cor:reduction}
Every solution of \eqref{eq:normalized} satisfies
\begin{equation}\label{eq:reduction}
\norm{\nabla v}_{L^\infty(B_{1/2})}
\leq
C\left(1+\mathcal B(v)\right).
\end{equation}
\end{corollary}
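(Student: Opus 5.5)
The corollary follows by combining \Cref{lem:active} and \Cref{lem:phases}, which together cover every point of $B_{1/2}$. The sets $B_{1/2}\cap\{|v|\leq c_0\delta\}$ and $B_{1/2}\cap\{|v|>c_0\delta\}$ partition $B_{1/2}$. On the first set, \eqref{eq:active} gives $\Lip_{B_{3/4}}v(x)\leq C(1+\mathcal B(v))$. On the second set, \eqref{eq:phases} gives the same bound. Hence
$$
\sup_{x\in B_{1/2}}\Lip_{B_{3/4}}v(x)\leq C\left(1+\mathcal B(v)\right),
$$
with $C$ the larger of the two universal constants.

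It remains to convert this pointwise upper Lipschitz bound into the $L^\infty$ gradient bound \eqref{eq:reduction}. Following the convention in the definition of $\Lip_\Omega$, I will show that $v$ is Lipschitz on $B_{1/2}$ with constant $L:=C(1+\mathcal B(v))$. If $\mathcal B(v)=\infty$ there is nothing to prove, so assume $L<\infty$.

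Fix $x,y\in B_{1/2}$. The segment $[x,y]$ lies in the convex set $B_{1/2}$. Let $g(t):=v(x+t(y-x))$ for $t\in[0,1]$. At every $t$ the upper Dini derivatives of $|g|$ along the segment are bounded by $L|y-x|$, because each point of the segment satisfies $\Lip_{B_{3/4}}v\leq L$. A continuous function whose upper Dini derivative is everywhere bounded by $M$ satisfies $|g(1)-g(0)|\leq M$. This is the standard monotonicity lemma; alternatively, cover $[0,1]$ by finitely many intervals on which the local difference quotients are at most $L+\eta$, and then let $\eta\to0$. Therefore
$$
|v(x)-v(y)|\leq L|x-y|.
$$
By Rademacher's theorem $v$ is differentiable almost everywhere in $B_{1/2}$, and at each point of differentiability $|\nabla v|=\Lip_{B_{3/4}}v\leq L$. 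This is \eqref{eq:reduction}.

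The only step needing care is this passage from pointwise to global Lipschitz. It uses that $\Lip$ is a limsup at every point, not just almost everywhere, so the Dini-type argument applies without any exceptional set. Everything else is a direct case split on which of the two sets contains the point.
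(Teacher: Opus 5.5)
Your proof is correct and follows the paper's argument: it splits $B_{1/2}$ into $\{|v|\leq c_0\delta\}$ and $\{|v|>c_0\delta\}$ and applies \Cref{lem:active} and \Cref{lem:phases} on the respective pieces. Your extra Dini-derivative step, which upgrades the pointwise bound on $\Lip_{B_{3/4}}v$ to a Lipschitz bound on $B_{1/2}$, is valid; the paper leaves it implicit by reading the $L^\infty$ gradient norm as a pointwise upper-Lipschitz bound. (Minor slip: the Dini derivatives you bound are those of $g$, not $|g|$.)
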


\begin{proof}
The sets $\{|v|\leq c_0\delta\}$ and
$\{|v|>c_0\delta\}$ partition $B_{1/2}$. Apply
Lemmas \ref{lem:active} and \ref{lem:phases} on the respective regions.
\end{proof}

\section{Diffuse compactness and planar transitions}
\label{sec:diffuse-compactness}

\leadefinition{label=ShiftedDiffuseProblem, uses={SingularPerturbationProblemData,lemSingularPerturbationRescaling,BufferedBoundarySeminorm}, context={This is the shared shifted diffuse equation used in subsequent compactness and continuation targets. The operator is normalized and uniformly elliptic, gamma is the effective strength, eta > 0 is the thickness, and |sigma| <= c_0 eta.}}{The blow-ups used later preserve the form of the equation but generally
introduce a vertical shift. We are therefore led to consider
\begin{equation}\label{eq:shifted}
F(D^2w)
=
\frac{\gamma}{\eta}
\beta\left(\frac{w+\sigma}{\eta}\right),
\qquad
|\sigma|\leq c_0\eta.
\end{equation}
Here $\eta$ is the rescaled thickness, $\gamma$ is the effective strength,
and $\sigma$ records the value subtracted when the blow-up is centered. The
bound on $\sigma$ is precisely what results from centering at a point of the
enlarged transition region introduced in the preceding section.
}

This section develops two complementary compactness principles. If the
effective strength tends to zero, the reaction disappears even when its
$L^\infty$ norm does not. If instead the thickness collapses while the
strength remains bounded, exact planar profiles carry the slope balance of the
diffuse layer into the limiting two-phase problem.

\subsection{Compactness at vanishing strength}

The first result is the curved-test compactness lemma used in the
large-slope normalization. Its content is that the smallness of $\gamma_j$,
rather than that of $\gamma_j/\eta_j$, is the relevant condition.

\begin{lemma}[Vanishing-strength compactness]\label{lem:compactness}
Let $\Omega\subset\mathbb R^n$ be open. Suppose that $F_j:\Sn\to\mathbb R$
are $(\lambda,\Lambda)$-elliptic, $F_j(0)=0$, and
$F_j\to F_\infty$ locally uniformly on $\Sn$. Let $w_j\in C(\Omega)$ solve
\begin{equation}\label{eq:vanishing-strength-sequence}
F_j(D^2w_j)
=
\frac{\gamma_j}{\eta_j}
\beta\left(\frac{w_j+\sigma_j}{\eta_j}\right)
\qquad\text{in }\Omega,
\end{equation}
where
$$
\gamma_j\longrightarrow0,
\qquad
\eta_j>0,
\qquad
|\sigma_j|\leq c_0\eta_j.
$$
If $w_j\to w$ locally uniformly in $\Omega$, then
\begin{equation}\label{eq:homogeneous-limit}
F_\infty(D^2w)=0
\qquad\text{in }\Omega.
\end{equation}
\end{lemma}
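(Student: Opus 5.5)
The plan is to verify the two viscosity inequalities for $w$ separately. Only the supersolution direction needs the new idea.

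\emph{Subsolution direction.} The right-hand side of \eqref{eq:vanishing-strength-sequence} is nonnegative because $\beta\geq0$. So if $\varphi\in C^2$ touches $w_j$ from above, the convention of \Cref{sec:pre} gives $F_j(D^2\varphi)\geq\frac{\gamma_j}{\eta_j}\beta(\cdot)\geq0$. Hence each $w_j$ is a viscosity solution of $F_j(D^2w_j)\geq0$. Since $F_j\to F_\infty$ locally uniformly and $w_j\to w$ locally uniformly, the standard stability of viscosity solutions gives $F_\infty(D^2w)\geq0$ in $\Omega$.

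\emph{Supersolution direction: set-up.} We must show that whenever $\varphi$ touches $w$ from below at $x_0$, we have $F_\infty(D^2\varphi(x_0))\leq0$. Argue by contradiction and assume $F_\infty(D^2\varphi(x_0))\geq2\theta>0$. After subtracting $\kappa|x-x_0|^2$ with $\kappa$ small, we may assume the contact is strict and the inequality persists, say with $\theta$ in place of $2\theta$ on a small ball $B_\rho(x_0)$.

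\emph{Reduction to a nonvanishing gradient.} First reduce to the case $\nabla\varphi(x_0)\neq0$. For small $p\in\mathbb R^n$, the function $\varphi+p\cdot(x-x_0)$ still touches $w$ from below (after a vertical shift) at some $x_p\to x_0$, because the contact is strict. For all but at most one value of $p$ in a small ball, the gradient $\nabla\varphi(x_p)+p$ is nonzero. Continuity of $F_\infty$ and of $D^2\varphi$ shows that $F_\infty(D^2\varphi(x_p))\geq\theta$ still holds. So it suffices to derive the contradiction under the extra assumption $|\nabla\varphi|\geq\mu>0$ on $B_\rho(x_0)$.

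\emph{Curved corrector.} This is the key step. The source has huge $L^\infty$ norm $\gamma_j/\eta_j$ but tiny integral in the value variable:
$$
\int\frac{\gamma_j}{\eta_j}\beta\left(\frac{s+\sigma_j}{\eta_j}\right)ds=\gamma_j\int\beta\to0.
$$
We therefore build a convex function of the value. Let $g_j$ be the convex solution of
$$
g_j''(s)=\frac{M\gamma_j}{\eta_j}\beta\left(\frac{s+\sigma_j}{\eta_j}\right),\qquad g_j(s_0)=g_j'(s_0)=0,
$$
with $s_0$ to the left of the support. Then $0\leq g_j'\leq M\gamma_j\int\beta$, and $g_j=O(\gamma_j)$ uniformly on bounded sets. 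Set $\psi_j:=\varphi+g_j(\varphi)$, so that $\psi_j\to\varphi$ in $C^1$ and
$$
D^2\psi_j=(1+g_j')D^2\varphi+g_j''(\varphi)\nabla\varphi\otimes\nabla\varphi.
$$
By strict contact and uniform convergence, $\psi_j+c_j$ touches $w_j$ from below at points $x_j\to x_0$, with $c_j\to0$.

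\emph{Main obstacle: matching the source argument.} The reaction term at $x_j$ is evaluated at $w_j(x_j)=\psi_j(x_j)+c_j$, not at $\varphi(x_j)$. This mismatch is the main obstacle. I would resolve it by defining $g_j$ implicitly. Take $h_j(s)=s+g_j(s)$, where $g_j$ solves the autonomous ODE
$$
g_j''(s)=\frac{M\gamma_j}{\eta_j}\beta\left(\frac{h_j(s)+c_j+\sigma_j}{\eta_j}\right),
$$
and, if necessary, replace $c_j$ by the contact constant selected at a preliminary stage, iterating or arguing via a continuity argument. This ODE is globally solvable with the same bounds, since $h_j'\geq1$ makes the change of variable $t=h_j(s)$ harmless. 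With the argument matched, ellipticity gives at $x_j$
$$
F_j(D^2\psi_j)\geq F_j\bigl((1+g_j')D^2\varphi\bigr)+\lambda\mu^2\frac{M\gamma_j}{\eta_j}\beta(\cdot).
$$
Choosing $M=2/(\lambda\mu^2)$, the second term exceeds the source $\frac{\gamma_j}{\eta_j}\beta(\cdot)$. The first term tends to $F_\infty(D^2\varphi(x_0))\geq\theta>0$. Since $\psi_j+c_j$ touches $w_j$ from below, the convention requires $F_j(D^2\psi_j)\leq$ source at $x_j$. The two bounds contradict each other for large $j$, which proves \eqref{eq:homogeneous-limit}.
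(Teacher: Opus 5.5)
Your outline follows the paper's strategy. Upper tests are settled by $\beta\ge0$. Lower tests are handled by contradiction: after a tilt to nonvanishing gradient, a convex corrector in the value variable is added whose curvature, times $|\nabla\varphi|^2$, beats the reaction, while its slope is only $O(\gamma_j)$. As written, however, two steps are not justified.

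\emph{The matching step.} You identify the obstacle but leave it open, and a one-shot choice of $c_j$ does not close it. The contact constant of the corrected function differs from the constant used to build the corrector. Since your corrector has curvature exactly $M\beta$ at its own level, any residual shift can place the contact where $\beta>0$ while the corrector is flat, for instance near the edge of $\operatorname{supp}\beta$.

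The fix, which is precisely the paper's device, is to put the sliding parameter inside the corrector. Fix one solution of
\[
G_j''(\tau)=\frac{M\gamma_j}{\eta_j}\,\beta\Bigl(\frac{\tau+G_j(\tau)+\sigma_j}{\eta_j}\Bigr),\qquad G_j\equiv0\ \text{far to the left}.
\]
Peano's theorem suffices here; uniqueness is not needed. Set $\Theta_j(\tau):=\tau+G_j(\tau)$. Your $g_j$ with parameter $c$ is just $G_j(\cdot+c)$, so your test functions are $\Theta_j(\varphi+c)$. Since $1\le\Theta_j'\le1+M\gamma_j\norm{\beta}_{L^1(\mathbb R)}$, this family is strictly increasing in $c$ and differs from $\varphi$ plus a constant by $O(\gamma_j)$ on $B_\rho(x_0)$. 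Slide it in $c$ to first contact at $x_j$. Then $w_j(x_j)=\Theta_j(\varphi(x_j)+c_j)$, so the argument of $\beta$ in the equation is exactly the one in $G_j''(\varphi(x_j)+c_j)$. Your final inequality then goes through with no case distinction.

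Equivalently, $T(c):=\min\bigl(w_j-\varphi-G_j(\varphi+c)\bigr)$ is a contraction with constant $M\gamma_j\norm{\beta}_{L^1(\mathbb R)}$, and its fixed point is the first-contact value. Your iteration therefore converges, but only in the limit.

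The paper's family $\Phi_{j,t}=\phi+t+\gamma_j\eta_jh\bigl((\phi+t+\sigma_j)/\eta_j\bigr)$ has the same form, but with an explicit convex $h$. There the level is matched only up to $\gamma_jh(s_j)$ in normalized units. The paper absorbs this by requiring $\lambda m^2h''\ge4\norm{\beta}_{L^\infty(\mathbb R)}$ on the doubled window $|s|\le2$, at the cost of separating active from inactive contacts. Your exact matching removes that split, at the price of an implicit ODE.

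\emph{The tilt.} The claim that at most one small $p$ is exceptional is false. Take $w=2Kx_1^2$ and $\varphi=Kx_1^2-\tfrac12|x'|^2$, with $x'=(x_2,\dots,x_n)$ and $K$ large, so that $F_\infty(D^2\varphi)\ge2K\lambda-(n-1)\Lambda>0$. Then every tilt $p=(0,p')$ touches at $(0,p')$, where $\nabla\varphi+p=0$.

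The reduction itself is correct; the paper invokes it as standard. One way to justify it: suppose $\nabla\varphi(x_0)=0$ and every small tilt produced only zero-gradient contacts $x_p$. Comparing the minimality of $x_p$ and $x_0$ gives $p\cdot(x_p-x_0)\ge0$. Subtract $\epsilon|x-x_0|^2$ so that $M:=D^2\varphi(x_0)$ is nonsingular. Then $\nabla\varphi(x_p)=-p$ yields $x_p-x_0=-M^{-1}p+o(|p|)$. Hence $M^{-1}\le0$, so $M<0$ and $F_\infty(M)<0$, a contradiction.
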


\begin{proof}
Let $\phi\in C^2$ touch $w$ strictly from above at $x_0\in\Omega$. By the
usual stability argument, there exist $x_j\to x_0$ at which a small vertical
translate of $\phi$ touches $w_j$ from above. Since the right-hand side of
\eqref{eq:vanishing-strength-sequence} is nonnegative,
$$
F_j(D^2\phi(x_j))\geq0.
$$
Passing to the limit gives
$F_\infty(D^2\phi(x_0))\geq0$.

It remains to prove the inequality for lower tests. Let $\phi$ touch $w$
strictly from below at $x_0$, and suppose, toward a contradiction, that
$$
F_\infty(D^2\phi(x_0))>0.
$$
Choose a ball $B_r(x_0)\Subset\Omega$ on which the contact is strict. By a
standard small tilt, followed by a vertical translation and a limiting
argument, we may assume that
\begin{equation}\label{eq:nonzero-test-gradient}
|\nabla\phi|\geq m>0
\qquad\text{in }B_r(x_0).
\end{equation}
Shrinking $r$ and taking $j$ large, we may also arrange that
\begin{equation}\label{eq:strict-test-operator}
F_j(D^2\phi)\geq4a>0
\qquad\text{in }B_r(x_0)
\end{equation}
for some $a>0$ independent of $j$.

If $\liminf_j\eta_j>0$, then the right-hand sides in
\eqref{eq:vanishing-strength-sequence} converge uniformly to zero, and the
ordinary stability theorem already gives a contradiction. We may therefore
pass to a subsequence for which $\eta_j\to0$.

Choose a smooth convex function $h:\mathbb R\to\mathbb R$ such that
$$
h(0)=0,
\qquad
\norm{h'}_{L^\infty(\mathbb R)}<\infty,
\qquad
\lambda m^2h''(s)
\geq
4\norm{\beta}_{L^\infty(\mathbb R)}
\quad\text{for }|s|\leq2.
$$
For $t\in\mathbb R$, define
$$
s_{j,t}(x)
:=
\frac{\phi(x)+t+\sigma_j}{\eta_j},
\qquad
\Phi_{j,t}(x)
:=
\phi(x)+t+\gamma_j\eta_jh(s_{j,t}(x)).
$$
Because $h'$ is bounded and $h(0)=0$, the correction tends to zero locally
uniformly. Moreover,
$$
\partial_t\Phi_{j,t}
=
1+\gamma_jh'(s_{j,t})
\geq\frac12
$$
for all large $j$. We may therefore slide the family $\Phi_{j,t}$ upward
until it first touches $w_j$. The strict boundary separation inherited from
the contact of $\phi$ with $w$ ensures that the first contact point
$x_j$ lies in $B_r(x_0)$ and satisfies $x_j\to x_0$.

At $x_j$, writing $s_j=s_{j,t_j}(x_j)$, we have
\begin{equation}\label{eq:corrected-test-hessian}
D^2\Phi_{j,t_j}
=
\bigl(1+\gamma_jh'(s_j)\bigr)D^2\phi
+
\frac{\gamma_j}{\eta_j}h''(s_j)
\nabla\phi\otimes\nabla\phi.
\end{equation}
The argument of $\beta$ in the equation at the contact point is
$$
\frac{\Phi_{j,t_j}(x_j)+\sigma_j}{\eta_j}
=
s_j+\gamma_jh(s_j).
$$
If the reaction is nonzero there, then
$|s_j+\gamma_jh(s_j)|<1$. Since $h$ is globally Lipschitz and $h(0)=0$,
this implies $|s_j|\leq2$ for all large $j$.

Using \eqref{eq:pucci-difference},
\eqref{eq:nonzero-test-gradient}, and
\eqref{eq:strict-test-operator}, we obtain
$$
F_j(D^2\Phi_{j,t_j})
\geq
4a-C\gamma_j
+
\lambda\frac{\gamma_j}{\eta_j}
h''(s_j)|\nabla\phi(x_j)|^2.
$$
If the reaction vanishes, the right-hand side is positive for large $j$,
contradicting the viscosity inequality for a lower test. If the reaction is
nonzero, the choice of $h$ gives
$$
F_j(D^2\Phi_{j,t_j})
>
\frac{\gamma_j}{\eta_j}
\norm{\beta}_{L^\infty(\mathbb R)}
\geq
\frac{\gamma_j}{\eta_j}
\beta\left(
\frac{\Phi_{j,t_j}(x_j)+\sigma_j}{\eta_j}
\right),
$$
which is again impossible. Hence every lower test satisfies
$F_\infty(D^2\phi(x_0))\leq0$. Letting the auxiliary tilt tend to zero proves
\eqref{eq:homogeneous-limit}.
\end{proof}

\subsection{Exact planar transitions}
We next construct the one-dimensional profiles that describe the passage
through the diffuse reaction layer. These profiles provide the comparison
geometry used when the thickness collapses. They do not, in general, determine
a unique limiting transmission law: the slope increment may retain information
about the behavior of $F$ at large matrices. What is universal, and sufficient
for the argument, is the size of this increment, its dependence on the normal
direction, and its asymptotic behavior at large slopes.

\leadefinition{label=PlanarRankOneOperator, uses={StructuralDataAndNotation}}{Fix $\nu\in\mathbb S^{n-1}$ and define
$$
Q_{F,\nu}(t)
:=
F(t\nu\otimes\nu),
\qquad t\geq0.
$$
}%
\lealemma{label=lemPlanarOperatorBounds, uses={Uniformellipticity,PlanarRankOneOperator}}{Since $F(0)=0$ and $F$ is uniformly elliptic, $Q_{F,\nu}$ is strictly
increasing from $[0,\infty)$ onto $[0,\infty)$, and
\begin{equation}\label{eq:Qbounds}
\lambda t
\leq
Q_{F,\nu}(t)
\leq
\Lambda t
\qquad\text{for }t\geq0.
\end{equation}
In particular, $Q_{F,\nu}^{-1}$ is well defined on $[0,\infty)$ and satisfies
\begin{equation}\label{eq:Qinverse-bounds}
\frac{y}{\Lambda}
\leq
Q_{F,\nu}^{-1}(y)
\leq
\frac{y}{\lambda}
\qquad\text{for }y\geq0.
\end{equation}
}

\lealemma{label=lemPlanarOperatorDirectionalBounds, uses={UniformEllipticityPucciBounds,PlanarRankOneOperator,lemPlanarOperatorBounds}}{We shall also need quantitative control of the dependence on the direction.
By \eqref{eq:pucci-difference},
\begin{equation}\label{eq:Qdirection}
\left|
Q_{F,\nu}(t)-Q_{F,\widetilde\nu}(t)
\right|
\leq
Ct|\nu-\widetilde\nu|
\qquad\text{for }t\geq0
\end{equation}
and $\nu,\widetilde\nu\in\mathbb S^{n-1}$. Consequently,
\begin{equation}\label{eq:Qinverse-direction}
\left|
Q_{F,\nu}^{-1}(y)
-
Q_{F,\widetilde\nu}^{-1}(y)
\right|
\leq
Cy|\nu-\widetilde\nu|
\qquad\text{for }y\geq0.
\end{equation}
Indeed, let
$$
r:=Q_{F,\nu}^{-1}(y),
\qquad
\widetilde r:=Q_{F,\widetilde\nu}^{-1}(y),
$$
and suppose, without loss of generality, that $r\geq\widetilde r$. Then
\begin{align*}
\lambda(r-\widetilde r)
&\leq
Q_{F,\nu}(r)-Q_{F,\nu}(\widetilde r)
\\
&=
Q_{F,\widetilde\nu}(\widetilde r)
-
Q_{F,\nu}(\widetilde r)
\\
&\leq
C\widetilde r|\nu-\widetilde\nu|
\leq
Cy|\nu-\widetilde\nu|,
\end{align*}
where the last inequality follows from
\eqref{eq:Qinverse-bounds}. This proves
\eqref{eq:Qinverse-direction}.
}

\leadefinition{label=PlanarTransitionIncrement, uses={SingularPerturbationProblemData,PlanarRankOneOperator,lemPlanarOperatorBounds}}{For $\gamma,\eta>0$, define
\begin{equation}\label{eq:J}
J_{F,\nu}(\gamma,\eta)
:=
2\eta
\int_{-1}^{1}
Q_{F,\nu}^{-1}\left(
\frac{\gamma}{\eta}\beta(s)
\right)\,ds.
\end{equation}
}%
\lealemma{label=lemTransitionIncrementBounds, uses={PlanarTransitionIncrement,lemPlanarOperatorBounds,lemPlanarOperatorDirectionalBounds}}{The bounds in \eqref{eq:Qinverse-bounds} give
\begin{equation}\label{eq:Jbounds}
\frac{2\gamma}{\Lambda}
\int_{-1}^{1}\beta(s)\,ds
\leq
J_{F,\nu}(\gamma,\eta)
\leq
\frac{2\gamma}{\lambda}
\int_{-1}^{1}\beta(s)\,ds.
\end{equation}
In particular,
$$
J_{F,\nu}(\gamma,\eta)\asymp\gamma
$$
uniformly in $\eta$ and $\nu$. Moreover,
\eqref{eq:Qinverse-direction} yields
\begin{equation}\label{eq:Jdirection}
\left|
J_{F,\nu}(\gamma,\eta)
-
J_{F,\widetilde\nu}(\gamma,\eta)
\right|
\leq
C\gamma|\nu-\widetilde\nu|.
\end{equation}
}

\leadefinition{label=ExactPlanarProfile, uses={ShiftedDiffuseProblem,PlanarRankOneOperator,lemPlanarOperatorBounds}}{We now construct the planar profile. Let $b>0$ be the incoming slope and set
\begin{equation}\label{eq:planar-q}
q(s)^2
:=
b^2
+
2\eta
\int_{-1}^{s}
Q_{F,\nu}^{-1}\left(
\frac{\gamma}{\eta}\beta(\tau)
\right)\,d\tau,
\qquad -1\leq s\leq1.
\end{equation}
Thus $q(-1)=b$ and $q(s)\geq b$. Define
\begin{equation}\label{eq:planar-time-change}
t(s)
:=
\eta\int_0^s\frac{d\tau}{q(\tau)}.
\end{equation}
The function $t$ is strictly increasing. Let $s=s(t)$ denote its inverse and
define
\begin{equation}\label{eq:planar-profile}
\varphi(t)
:=
\eta s(t)-\sigma
\qquad
\text{for }t\in[t(-1),t(1)].
\end{equation}
}%
\lealemma{label=lemExactPlanarEquation, uses={ExactPlanarProfile,lemPlanarOperatorBounds}}{Since
$$
\frac{dt}{ds}=\frac{\eta}{q(s)},
$$
we have
\begin{equation}\label{eq:planar-profile-derivatives}
\varphi'(t)=q(s),
\qquad
\varphi''(t)
=
Q_{F,\nu}^{-1}\left(
\frac{\gamma}{\eta}\beta(s)
\right),
\qquad
s=\frac{\varphi(t)+\sigma}{\eta}.
\end{equation}
Consequently, the planar function
$$
\Phi(x):=\varphi(x\cdot\nu)
$$
satisfies
\begin{align*}
F(D^2\Phi)
&=
F\left(
\varphi''(x\cdot\nu)\nu\otimes\nu
\right)
\\
&=
Q_{F,\nu}\left(
Q_{F,\nu}^{-1}\left(
\frac{\gamma}{\eta}
\beta\left(\frac{\Phi+\sigma}{\eta}\right)
\right)
\right)
\\
&=
\frac{\gamma}{\eta}
\beta\left(\frac{\Phi+\sigma}{\eta}\right)
\end{align*}
throughout the transition interval.
}

\lealemma{label=lemExactPlanarTransition, uses={ExactPlanarProfile,lemExactPlanarEquation,PlanarTransitionIncrement,lemPlanarOperatorBounds}}{Because $\beta$ vanishes near $\{-1,1\}$, the function $\varphi$ extends
linearly outside $[t(-1),t(1)]$, with slope $b$ on the incoming side and
slope
$$
a:=q(1)
$$
on the outgoing side. The resulting planar function is a global smooth
solution of the shifted diffuse equation. Evaluating
\eqref{eq:planar-q} at $s=1$ gives the exact slope balance
\begin{equation}\label{eq:jump}
a^2-b^2
=
J_{F,\nu}(\gamma,\eta).
\end{equation}
}

\leadefinition{label=PlanarTransmissionMap, uses={PlanarTransitionIncrement}}{Accordingly, the transmission map carried by the planar layer is
\begin{equation}\label{eq:G}
G_{F,\nu,\gamma,\eta}(b)
:=
\sqrt{
b^2+J_{F,\nu}(\gamma,\eta)
}.
\end{equation}
}%
\lealemma{label=lemTransmissionMapBasicEstimates, uses={PlanarTransmissionMap,lemTransitionIncrementBounds}}{For brevity, write
$$
G(b):=G_{F,\nu,\gamma,\eta}(b),
\qquad
J:=J_{F,\nu}(\gamma,\eta).
$$
Then
\begin{equation}\label{eq:Gderivatives}
G'(b)
=
\frac{b}{\sqrt{b^2+J}},
\qquad
G''(b)
=
\frac{J}{(b^2+J)^{3/2}}.
\end{equation}
Using \eqref{eq:Jbounds}, we obtain, for every $b>0$,
\begin{equation}\label{eq:Gidentity}
|G(b)-b|
\leq
C\frac{\gamma}{b},
\qquad
|G'(b)-1|
\leq
C\frac{\gamma}{b^2},
\qquad
|G''(b)|
\leq
C\frac{\gamma}{b^3}.
\end{equation}
Indeed,
$$
G(b)-b
=
\frac{J}{\sqrt{b^2+J}+b},
$$
and the remaining estimates follow directly from
\eqref{eq:Gderivatives}.
}

\lealemma{label=lemTransmissionMapDirectionalEstimate, uses={PlanarTransmissionMap,lemTransitionIncrementBounds}}{The directional modulus of $J$ in \eqref{eq:Jdirection} also gives
\begin{equation}\label{eq:Gdirection}
\left|
G_{F,\nu,\gamma,\eta}(b)
-
G_{F,\widetilde\nu,\gamma,\eta}(b)
\right|
\leq
C\frac{\gamma}{b}
|\nu-\widetilde\nu|
\qquad\text{for }b>0.
\end{equation}
Indeed, rationalizing the difference yields
\begin{align*}
&
\left|
G_{F,\nu,\gamma,\eta}(b)
-
G_{F,\widetilde\nu,\gamma,\eta}(b)
\right|
\\
&\qquad
=
\frac{
\left|
J_{F,\nu}(\gamma,\eta)
-
J_{F,\widetilde\nu}(\gamma,\eta)
\right|
}{
\sqrt{b^2+J_{F,\nu}(\gamma,\eta)}
+
\sqrt{b^2+J_{F,\widetilde\nu}(\gamma,\eta)}
}
\\
&\qquad
\leq
C\frac{\gamma}{b}
|\nu-\widetilde\nu|.
\end{align*}
}

\lealemma{label=lemTransmissionMapEstimates, uses={lemTransmissionMapBasicEstimates,lemTransmissionMapDirectionalEstimate}}{In particular, if $b\geq K\sqrt\gamma$, then
\begin{equation}\label{eq:Glarge-slope-regime}
\frac{|G(b)-b|}{b}
+
|G'(b)-1|
+
b|G''(b)|
\leq
\frac{C}{K^2},
\end{equation}
and
\begin{equation}\label{eq:Glarge-slope-direction}
\left|
G_{F,\nu,\gamma,\eta}(b)
-
G_{F,\widetilde\nu,\gamma,\eta}(b)
\right|
\leq
\frac{C}{K^2}
b|\nu-\widetilde\nu|.
\end{equation}
These are the large-slope estimates required in the direction-dependent
De Silva--Savin theory.
}

The dependence of $J_{F,\nu}(\gamma,\eta)$ on $\eta$ is essential. Unless
$F$ has a well-defined recession operator, the quantity in \eqref{eq:J}
need not converge as $\eta\downarrow0$. The sharp-interface compactness
statement must therefore be formulated in terms of subsequential limits.
Nevertheless, \eqref{eq:Jbounds}, \eqref{eq:Jdirection},
\eqref{eq:Gidentity}, and \eqref{eq:Gdirection} are uniform in $\eta$,
which is exactly the uniformity needed in the subsequent argument.
\subsection{Curved planar barriers}

To pass the planar slope balance to a free-boundary limit, one must bend the
one-dimensional profiles without losing the differential inequality in the
thin reaction region. The following elementary observation provides the
needed margin.

\begin{lemma}[Curved planar barriers]\label{lem:curved-planar-barriers}
Fix $0<b_0<b_1<\infty$ and $0<\tau<1/2$. There exists
$\kappa_0>0$, depending only on $n$, $\lambda$, and $\Lambda$, with the
following property. Let $b\in[b_0,b_1]$, $\nu\in\mathbb S^{n-1}$, and let
$\varphi^+$ be the planar profile with incoming slope $b$ associated with
$F$, direction $\nu$, thickness $\eta$, and strength $(1+\tau)\gamma$. If
$d\in C^2(U)$ satisfies
$$
\norm{\nabla d\otimes\nabla d-\nu\otimes\nu}_{L^\infty(U)}
\leq
\kappa_0\tau,
\qquad
D^2d\geq\kappa I
$$
in an open set $U$, for some $\kappa>0$, then
$\Phi^+:=\varphi^+\circ d$ satisfies
\begin{equation}\label{eq:strict-lower-comparison}
F(D^2\Phi^+)
>
\frac{\gamma}{\eta}
\beta\left(\frac{\Phi^++\sigma}{\eta}\right)
\qquad\text{in }U.
\end{equation}

Similarly, if $\varphi^-$ is the profile with strength
$(1-\tau)\gamma$ and $d$ satisfies the same gradient condition together with
$D^2d\leq-\kappa I$, then $\Phi^-:=\varphi^-\circ d$ satisfies
\begin{equation}\label{eq:strict-upper-comparison}
F(D^2\Phi^-)
<
\frac{\gamma}{\eta}
\beta\left(\frac{\Phi^-+\sigma}{\eta}\right)
\qquad\text{in }U.
\end{equation}
\end{lemma}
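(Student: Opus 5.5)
The plan is a direct pointwise computation of $D^2\Phi^\pm$, followed by the Pucci bounds \eqref{eq:pucci-difference} and the exact planar identity of \Cref{lemExactPlanarEquation}. By \Cref{lemExactPlanarTransition}, $\varphi^+$ is defined and $C^2$ on all of $\mathbb R$, so $\Phi^+=\varphi^+\circ d$ is $C^2$ in $U$, and
$$
D^2\Phi^+
=
(\varphi^+)''(d)\,\nabla d\otimes\nabla d
+
(\varphi^+)'(d)\,D^2d .
$$
Two structural facts about the profile will be used. First, $(\varphi^+)'=q\geq b\geq b_0>0$ everywhere, including on the linear extensions. Second, $(\varphi^+)''=Q_{F,\nu}^{-1}\bigl((1+\tau)\tfrac{\gamma}{\eta}\beta(s)\bigr)\geq0$, with $s=(\Phi^++\sigma)/\eta$; this second derivative vanishes outside the transition interval.

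By \eqref{eq:Qinverse-bounds} and $\tau<1/2$,
$$
0\leq(\varphi^+)''\leq\frac{(1+\tau)\gamma\beta(s)}{\lambda\eta}\leq\frac{2\gamma\beta(s)}{\lambda\eta}.
$$
I then split $D^2\Phi^+$ as a sum of three terms:
$$
(\varphi^+)''\,\nu\otimes\nu,
\qquad
(\varphi^+)''\,(\nabla d\otimes\nabla d-\nu\otimes\nu),
\qquad
(\varphi^+)'D^2d .
$$
I apply \eqref{eq:pucci-difference} twice. The third term is positive definite, being $\geq b_0\kappa I$, so it contributes at least $\lambda n b_0\kappa>0$. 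The middle term has operator norm at most $(\varphi^+)''\kappa_0\tau$, so it costs at most $n\Lambda(\varphi^+)''\kappa_0\tau$. The first term gives exactly $Q_{F,\nu}((\varphi^+)'')=(1+\tau)\frac\gamma\eta\beta(s)$, by the definition of the profile with strength $(1+\tau)\gamma$. Combining these,
$$
F(D^2\Phi^+)
\geq
(1+\tau)\frac{\gamma}{\eta}\beta(s)
-
\frac{2n\Lambda\kappa_0\tau}{\lambda}\,\frac{\gamma}{\eta}\beta(s)
+
\lambda n b_0\kappa .
$$
Choosing $\kappa_0:=\lambda/(2n\Lambda)$, which depends only on $n,\lambda,\Lambda$, the middle loss is absorbed by the surplus $\tau\frac\gamma\eta\beta(s)$. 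The leftover $\lambda n b_0\kappa>0$ then makes the inequality strict, and this holds even where $\beta(s)=0$. Since $s=(\Phi^++\sigma)/\eta$ by construction \eqref{eq:planar-profile}, this is exactly \eqref{eq:strict-lower-comparison}.

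The supersolution case is symmetric. With strength $(1-\tau)\gamma$ one still has $(\varphi^-)''\geq0$ and $(\varphi^-)'\geq b_0$, but now $(\varphi^-)'D^2d\leq-b_0\kappa I$. The upper Pucci bound gives
$$
F(D^2\Phi^-)
\leq
(1-\tau)\frac\gamma\eta\beta(s)
+
\frac{n\Lambda\kappa_0\tau}{\lambda}\frac\gamma\eta\beta(s)
-
\lambda n b_0\kappa ,
$$
and the same $\kappa_0$ yields \eqref{eq:strict-upper-comparison}.

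The only delicate step is the error term coming from the gradient mismatch $\nabla d\otimes\nabla d-\nu\otimes\nu$. It is multiplied by $(\varphi^\pm)''$, which is of the huge size $\gamma/\eta$ inside the layer, so it cannot be absorbed by the curvature term $b_0\kappa$. It must instead be compared with the strength surplus $\pm\tau\frac{\gamma}{\eta}\beta$. This works only because \eqref{eq:Qinverse-bounds} bounds $(\varphi^\pm)''$ by a universal multiple of $\frac\gamma\eta\beta(s)$ at the same point, which is why the tolerance must be $\kappa_0\tau$ rather than an absolute constant. The constants $b_0,b_1$ enter only through strictness; $b_1$ is not actually needed, and $\kappa_0$ is independent of them.
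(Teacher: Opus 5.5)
Your proposal is correct and follows the paper's proof: you use the same decomposition $D^2\Phi^+=k\,\nabla d\otimes\nabla d+q\,D^2d$ and the Pucci bounds. The identity $F(k\nu\otimes\nu)=(1+\tau)r$, together with $k\le(1+\tau)r/\lambda$ from \eqref{eq:Qbounds}, absorbs the gradient-mismatch error, and the term $\lambda q\tr(D^2d)\ge\lambda n b_0\kappa$ supplies strictness. Your explicit choice $\kappa_0=\lambda/(2n\Lambda)$ simply makes concrete the paper's ``universally small'' $\kappa_0$.
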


\begin{proof}
We prove \eqref{eq:strict-lower-comparison}; the other inequality is
analogous. Write $q=(\varphi^+)'$ and $k=(\varphi^+)''\geq0$. Then
$$
D^2(\varphi^+\circ d)
=
k\nabla d\otimes\nabla d
+
qD^2d.
$$
Since $q\geq b_0$ and $D^2d\geq\kappa I$, the second term is positive
definite. Moreover, by \eqref{eq:pucci-difference},
$$
F(D^2(\varphi^+\circ d))
\geq
F(k\nu\otimes\nu)
-
C\kappa_0\tau k
+
\lambda q\tr(D^2d).
$$
If
$$
r
:=
\frac{\gamma}{\eta}
\beta\left(\frac{\varphi^++\sigma}{\eta}\right),
$$
then the defining equation of the strengthened profile gives
$F(k\nu\otimes\nu)=(1+\tau)r$, while
\eqref{eq:Qbounds} gives $k\leq(1+\tau)r/\lambda$. Choosing
$\kappa_0$ universally small, we obtain
$$
F(D^2(\varphi^+\circ d))
\geq
\left(1+\frac\tau2\right)r
+
\lambda b_0\tr(D^2d)
>
r.
$$
This is \eqref{eq:strict-lower-comparison}. For the weakened profile, the
same calculation is performed with the upper Pucci inequality and
$D^2d\leq-\kappa I$.
\end{proof}

\subsection{Sharp-interface stability}

\leadefinition{label=TwoPlaneTestFunction, uses={StructuralDataAndNotation,ViscositySolutionConvention}, context={Use this as the definition of two-plane free-boundary test functions and of the orientation of their normal. Contacts from below and above are viscosity contacts at a free-boundary point and are required to be transverse.}}{We can now pass to the limit across a collapsing diffuse layer. For a smooth
function $d$ with $d(x_0)=0$ and $|\nabla d(x_0)|=1$, write
$$
P_{a,b}(d)
:=
a\,d^+-b\,d^-.
$$
The normal direction at $x_0$ is
$\nu=\nabla d(x_0)$. In the viscosity formulation below, contact from below
imposes $a\leq G_\nu(b)$, while contact from above imposes the reverse
inequality.
}

\begin{lemma}[Sharp-interface stability]\label{lem:sharp-stability}
Let $\Omega\subset\mathbb R^n$ be open, and let
$F_j:\Sn\to\mathbb R$ be $(\lambda,\Lambda)$-elliptic operators such that
$F_j(0)=0$ and $F_j\to F_\infty$ locally uniformly. Suppose that
$\eta_j\to0$, that $0\leq\gamma_j\leq\gamma_0$, and that
$|\sigma_j|\leq c_0\eta_j$. Let $w_j\in C(\Omega)$ solve
\begin{equation}\label{eq:collapsing-layer-sequence}
F_j(D^2w_j)
=
\frac{\gamma_j}{\eta_j}
\beta\left(\frac{w_j+\sigma_j}{\eta_j}\right)
\qquad\text{in }\Omega,
\end{equation}
and assume that $w_j\to w$ locally uniformly in $\Omega$.

After passing to a subsequence, $\gamma_j\to\gamma_\infty$ and there exists
a continuous function
$J_\infty:\mathbb S^{n-1}\to[0,\infty)$ such that
\begin{equation}\label{eq:J-uniform-limit}
J_{F_j,\nu}(\gamma_j,\eta_j)
\longrightarrow
J_\infty(\nu)
\qquad\text{uniformly for }\nu\in\mathbb S^{n-1}.
\end{equation}
The limit $w$ satisfies
$$
F_\infty(D^2w)=0
\qquad\text{in }\{w>0\}\cup\{w<0\},
$$
and, on $\partial\{w>0\}$, the transmission condition
\begin{equation}\label{eq:sharp-law}
a
=
G_\nu(b),
\qquad
G_\nu(b)
:=
\sqrt{b^2+J_\infty(\nu)},
\end{equation}
in the viscosity sense. More precisely, if $P_{a,b}(d)$ touches $w$ strictly
from below at a free-boundary point $x_0$ transversally, then
$a\leq G_\nu(b)$; if it touches from above, then
$a\geq G_\nu(b)$.

The bounds \eqref{eq:Jbounds} and \eqref{eq:Gidentity} pass to the limit,
with $\gamma$ replaced by $\gamma_\infty$, uniformly in $\nu$.
\end{lemma}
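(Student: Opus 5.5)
The plan has three parts: compactness of the slope increments, the equation in the open phases, and the free-boundary inequalities via curved planar barriers.

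\emph{Compactness of $J$.} By \eqref{eq:Jbounds} and \eqref{eq:Jdirection}, the functions $\nu\mapsto J_{F_j,\nu}(\gamma_j,\eta_j)$ are uniformly bounded by $C\gamma_0$ and uniformly Lipschitz on $\mathbb S^{n-1}$. Arzel\`a--Ascoli, applied after extracting a subsequence with $\gamma_j\to\gamma_\infty$, gives \eqref{eq:J-uniform-limit}. The limit $J_\infty$ inherits \eqref{eq:Jbounds} with $\gamma_\infty$ and the modulus \eqref{eq:Jdirection}. Since $G_\nu(b)=\sqrt{b^2+J_\infty(\nu)}$, the bounds \eqref{eq:Gidentity} then follow by the same computation as before.

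\emph{The phases.} Take $x_0$ with $w(x_0)>0$, say $w\geq 2\mu>0$ on $B_r(x_0)$. For large $j$, uniform convergence and $|\sigma_j|\leq c_0\eta_j\to0$ give $w_j+\sigma_j\geq\mu>\eta_j$ there. The right-hand side of \eqref{eq:collapsing-layer-sequence} therefore vanishes identically, and ordinary viscosity stability yields $F_\infty(D^2w)=0$. The set $\{w<0\}$ is handled the same way.

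\emph{Transmission condition, contact from below.} This is the main step. Suppose $P_{a,b}(d)$ touches $w$ strictly from below at $x_0$ with $\nu=\nabla d(x_0)$, $b>0$, and suppose for contradiction that $a>G_\nu(b)$. Fix a small $\tau$ so that $a>\sqrt{b'^2+(1+\tau)^2J_\infty(\nu)}+\tau$ for some $b'$ slightly larger than $b$; use of $J$ at strength $(1+\tau)\gamma_j$ costs only a factor close to $1+\tau$ by \eqref{eq:Jbounds}. Replace $d$ by the convexified function
\[
\tilde d(x)=d(x)+\kappa|x-x_0|^2-\kappa' \rho^2 .
\]
On a small ball $B_\rho(x_0)$ this has $D^2\tilde d\geq \kappa I$ and $\nabla\tilde d\otimes\nabla\tilde d$ within $\kappa_0\tau$ of $\nu\otimes\nu$. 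The subtraction of $\kappa'\rho^2$ keeps $P$ strictly below $w$ on $\partial B_\rho$ while making $P$ exceed $w$ near $x_0$; one uses the strict contact and $a>G_\nu(b)$ to push the positive phase up slightly. Then set $\Phi_j^+=\varphi_j^+\circ\tilde d$, where $\varphi_j^+$ is the planar profile of Lemma~\ref{lem:curved-planar-barriers} with incoming slope $b'$, strength $(1+\tau)\gamma_j$ and thickness $\eta_j$. Its outgoing slope $a_j=G_{F_j,\nu,(1+\tau)\gamma_j,\eta_j}(b')$ converges to a value below $a$. The layer has width $O(\eta_j)$, so $\Phi^+_j\to b'\tilde d^+\ldots$ uniformly, with the appropriate signs on each side.

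Lemma~\ref{lem:curved-planar-barriers} makes $\Phi^+_j$ a strict subsolution in the sense of \eqref{eq:strict-lower-comparison}. Slide $\Phi_j^+$ (translating in $t$) from far below until it first touches $w_j$. Boundary separation forces the contact into the interior, and there the strict inequality contradicts the supersolution property of $w_j$. The delicate bookkeeping is the following: one must arrange that the limit barrier actually crosses $w$ near $x_0$, which is where $a>G_\nu(b)$ and the strict contact are used, while staying below $w$ on $\partial B_\rho$. One must also check that the profile's vertical shift by $\sigma_j$ and the $t$-translations do not spoil the curvature estimate.

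\emph{Contact from above.} This is symmetric: use $\varphi^-$ with strength $(1-\tau)\gamma_j$ and a concave perturbation $D^2\tilde d\leq-\kappa I$, together with \eqref{eq:strict-upper-comparison}. Transversality ($b>0$, or $a>0$ for the other side) guarantees $q\geq b_0>0$, as Lemma~\ref{lem:curved-planar-barriers} requires. I expect this geometric construction of the crossing barrier to be the main obstacle.
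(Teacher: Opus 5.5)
\textbf{Overall.} Your three-part plan matches the paper's. It uses Arzel\`a--Ascoli for $\nu\mapsto J_{F_j,\nu}(\gamma_j,\eta_j)$ from \eqref{eq:Jbounds} and \eqref{eq:Jdirection}, and the vanishing of the reaction on compact subsets of $\{w>0\}\cup\{w<0\}$. At the free boundary it takes a strengthened exact profile with incoming slope $b'>b$, composes it with a curved defining function, and slides it to first contact, with \Cref{lem:curved-planar-barriers} supplying the strict inequality. One minor correction: \eqref{eq:Jbounds} compares $J_{F_j,\nu}((1+\tau)\gamma_j,\eta_j)$ with $J_{F_j,\nu}(\gamma_j,\eta_j)$ only up to the factor $\Lambda/\lambda$. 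The closeness you need comes from the $1/\lambda$-Lipschitz continuity of $Q_{F_j,\nu}^{-1}$, which is \eqref{eq:J-strength-continuity}, and it gives a factor $1+C\tau$, not $(1+\tau)^2$.

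\textbf{The gap.} The genuine gap is the step you flag as the main obstacle, and the construction you sketch cannot close it. In $\tilde d=d+\kappa|x-x_0|^2-\kappa'\rho^2$, the constant moves $x_0$ and $\partial B_\rho$ together. The convexifying term raises $\partial B_\rho$ by $\kappa\rho^2$ relative to $x_0$, and $\kappa$ is bounded below by the negative part of $D^2d$. So at the slide position where the barrier reaches $w(x_0)=0$, you have $\tilde d=d+\kappa\rho^2$ on $\partial B_\rho$. Near $\partial B_\rho\cap\{d=0\}$ the barrier is then about $a''\kappa\rho^2$, where $a''<a$ is the limiting outgoing slope of the strengthened profiles. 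The room supplied by $a>G_\nu(b)$ is $P_{a,b}(d)-P_{a'',b'}(d)\ge\min\{a-a'',b'-b\}\,|d|$, which vanishes exactly on that set. You would therefore need $w-P_{a,b}(d)\gtrsim\kappa\rho^2$ there. Strict contact gives no such bound (the gap may be $|x-x_0|^4$), and then the first contact can sit on $\partial B_\rho$.

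The paper organizes this step more cleanly. It uses $d_r=d+K(|x-x_0|^2-r^2)$, which equals $d$ on $\partial B_r$ and lies below $d$ inside, so the barrier starts below $w_j$ and is slid upward with no crossing arranged in advance. Note, however, that its appeal to strict contact for an interior first contact tacitly needs the same gap of order $Kr^2$.

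\textbf{The missing idea.} Drop convexity. The proof of \Cref{lem:curved-planar-barriers} only needs $\mathcal P^-_{\lambda,\Lambda}(D^2d)>0$, since by \eqref{eq:pucci-difference}
$F(k\nabla d\otimes\nabla d+qD^2d)\ge F(k\nabla d\otimes\nabla d)+q\,\mathcal P^-_{\lambda,\Lambda}(D^2d)$.
A Pucci-subharmonic correction is allowed to be concave in the tangential directions. Take $x_0=0$, $\nu=e_n$, $M:=\sup_{B_{r_0}}|D^2d|$, $\mu:=\lambda/(2(n-1)\Lambda)$, and $c_1:=\min\{(a-a'')/a'',(b'-b)/b'\}\le1$, and set
\[
\tilde d_s:=d+A\bigl(x_n^2-\mu|x'|^2\bigr)+s .
\]
Then $\mathcal P^-_{\lambda,\Lambda}(D^2\tilde d_s)\ge A\lambda-n\Lambda M>0$ once $A$ is large; also take $A\mu\ge M$. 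Next choose $r\le r_0$ so small that $A(1+\mu)r\le c_1$ and the gradient condition of \Cref{lem:curved-planar-barriers} holds in $B_r$.

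On $\partial B_r$, using $|x_n-d|\le Mr^2/2$, you get $\tilde d_s-d\le s-\tfrac12\mu Ar^2+c_1|d|$. With $t:=\tfrac14\mu Ar^2$, a three-case check ($d>0$, $d=0$, $d<0$) gives $P_{a'',b'}(\tilde d_s)<P_{a,b}(d)\le w$ on $\partial B_r$ for all $s\le t$. At the same time $P_{a'',b'}(\tilde d_t)(0)=a''t>0=w(0)$. Hence, for large $j$, the diffuse barriers $\varphi_j^+\circ\tilde d_s$ first touch $w_j$ from below at an interior point, contradicting the strict inequality. This uses no strictness of the contact at all. Contact from above is symmetric, with $\mathcal P^+_{\lambda,\Lambda}(D^2\tilde d)<0$ and strength $(1-\tau)\gamma_j$.
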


\begin{proof}
After extracting a subsequence, we may assume that
$\gamma_j\to\gamma_\infty$. For
$$
J_j(\nu)
:=
J_{F_j,\nu}(\gamma_j,\eta_j),
$$
\eqref{eq:Jbounds} gives a uniform bound. Uniform ellipticity also gives
\begin{equation}\label{eq:J-direction-modulus}
|J_j(\nu)-J_j(\mu)|
\leq
C\gamma_j|\nu-\mu|.
\end{equation}
Indeed, \eqref{eq:pucci-difference} implies that
$F_j(t\nu\otimes\nu)$ changes by at most
$Ct|\nu-\mu|$ when $\nu$ is replaced by $\mu$, while the inverse functions
$Q_{F_j,\nu}^{-1}$ are $1/\lambda$-Lipschitz. Estimate
\eqref{eq:J-direction-modulus} then follows from \eqref{eq:J}.
Arzel\`a--Ascoli yields \eqref{eq:J-uniform-limit}.

If $K\Subset\{w>0\}\cup\{w<0\}$, then
$|w_j+\sigma_j|>\eta_j$ on $K$ for all large $j$. The reaction therefore
vanishes on $K$, and ordinary viscosity stability gives the homogeneous
equation in the two open phases.

We verify the free-boundary condition. Suppose that $P_{a,b}(d)$ touches
$w$ strictly from below at $x_0$, with
$\nu=\nabla d(x_0)$ and $b>0$, and assume for contradiction that
$$
a>G_\nu(b).
$$
Choose first a small increase $b'>b$ and then $\tau>0$ so small that, for
all large $j$, the outgoing slope of the planar profile associated with
direction $\nu$, incoming slope $b'$, and strength
$(1+\tau)\gamma_j$ is strictly smaller than $a$. This is possible because
\eqref{eq:J-uniform-limit} holds and
\begin{equation}\label{eq:J-strength-continuity}
\left|
J_{F_j,\nu}((1+\tau)\gamma_j,\eta_j)
-
J_{F_j,\nu}(\gamma_j,\eta_j)
\right|
\leq
C\tau\gamma_j,
\end{equation}
which follows from the $1/\lambda$-Lipschitz continuity of
$Q_{F_j,\nu}^{-1}$.

Fix a small ball $B_r(x_0)\Subset\Omega$ on which the contact is strict.
Choose $K>0$ so that
$$
d_r(x)
:=
d(x)+K\bigl(|x-x_0|^2-r^2\bigr)
$$
is uniformly convex in $B_r(x_0)$. Upon decreasing $r$, we also have
$$
\norm{
\nabla d_r\otimes\nabla d_r-\nu\otimes\nu
}_{L^\infty(B_r(x_0))}
\leq
\kappa_0\tau.
$$
Compose $d_r$ with the strengthened planar profile just described. The
profile has negative-side slope $b'>b$ and positive-side slope strictly less
than $a$; since it is increasing and $d_r\leq d$ in $B_r(x_0)$, its limiting
two-plane lies strictly below the touching two-plane in the two open phases.
The physical width of the profile is $O(\eta_j/b')$, so the same ordering
holds on $\partial B_r(x_0)$ for all large $j$.

Translate the profile in its one-dimensional variable and slide it upward
until first contact with $w_j$. The strict contact of $P_{a,b}(d)$ with $w$
and the local uniform convergence ensure that the first contact occurs in
the interior of $B_r(x_0)$. By
\Cref{lem:curved-planar-barriers}, the touching function satisfies the strict
inequality \eqref{eq:strict-lower-comparison}, contradicting the viscosity
inequality for $w_j$. Hence $a\leq G_\nu(b)$.

The argument for a test from above is symmetric. If
$a<G_\nu(b)$, choose $b'<b$, weaken the strength to
$(1-\tau)\gamma_j$, replace $d_r$ by
$$
d(x)-K\bigl(|x-x_0|^2-r^2\bigr),
$$
and slide the resulting strict upper comparison until first contact. This
contradicts \eqref{eq:strict-upper-comparison}. The cases in which one of the
normal slopes vanishes follow by the usual tilted-test approximation. Thus
\eqref{eq:sharp-law} holds in the viscosity sense. The limiting forms of
\eqref{eq:Jbounds} and \eqref{eq:Gidentity} follow directly from
\eqref{eq:J-uniform-limit}.
\end{proof}

The preceding lemma is the diffuse counterpart of the stability principle
underlying the De Silva--Savin two-phase theory \cite{DeSilvaSavin}. Its role
here is deliberately limited: it provides the compact sharp-interface
comparisons needed in the iteration, without presupposing that the original
singular perturbation selects a unique transmission law.

\section{Diffuse decay and slope continuation}
\label{sec:diffuse-continuation}

The purpose of this section is to convert the compactness principles of
\Cref{sec:diffuse-compactness} into quantitative control at every smaller
scale. The sharp-interface argument of De Silva and Savin provides the
model: either the normalized oscillation decays, or the solution has already
entered a Lipschitz regime. In the present diffuse problem, this alternative
must be transported through a reaction layer whose thickness is arbitrary
and whose transmission law is not prescribed. The exact profiles and curved
barriers constructed above provide precisely this transport.

There is one further point that requires care. The affine corrections
produced by an improvement-of-flatness iteration need not be summable after
the error reaches the natural scale $\sqrt\gamma$. We shall not assert such
summability. If the affine slopes remain bounded, the trapping estimates
alone imply the desired Lipschitz bound. If they become unbounded, we stop at
the first large slope and normalize by its size. The effective strength then
tends to zero, while the full De Silva--Savin alternative propagates the
nontrivial unit-scale slope back to the original scale, giving a
contradiction.

\subsection{Sharp-interface theory}

We use the following consequence of the decay and improvement-of-flatness
theorems of De Silva and Savin \cite{DeSilvaSavin}. We state the uniform
version needed here. The same proof permits a continuous
dependence of the transmission function on the normal direction, provided
that this dependence satisfies the quantitative bound below.

\begin{theorem}[Sharp-interface decay alternative]
\label{thm:sharp-decay-alternative}
There exist constants
$$
\theta\in(0,1/4),
\qquad
\omega_*>0,
\qquad
C<\infty,
$$
depending only on $n$, $\lambda$, and $\Lambda$, with the following
property. Let $z$ be a viscosity solution in $B_2$ of a two-phase problem
with uniformly $(\lambda,\Lambda)$-elliptic equations in its two phases and
with transmission condition
$$
a=G_\nu(b).
$$
Assume that $z(0)=0$, that each $G_\nu$ is strictly increasing, and that,
for some $M\geq0$,
\begin{equation}\label{eq:sharp-large-slope-hypotheses}
\begin{split}
|G_\nu'(b)-1|+b|G_\nu''(b)|
&\leq\omega_*,
\\
|G_\nu(b)-G_\mu(b)|
&\leq\omega_*b|\nu-\mu|
\end{split}
\qquad
\text{for }b\geq M.
\end{equation}
If
$$
A:=\norm{z}_{L^\infty(B_1)},
$$
then at least one of the following alternatives holds:
\begin{align}
\theta^{-1}\norm{z}_{L^\infty(B_\theta)}
&\leq
\frac12A+CM,
\label{eq:sharp-decay}
\\
\norm{\nabla z}_{L^\infty(B_\theta)}
&\leq
C(A+M).
\label{eq:sharp-lipschitz}
\end{align}
The constants are uniform over compact families of normalized uniformly
elliptic operators and transmission functions satisfying
\eqref{eq:sharp-large-slope-hypotheses}.
\end{theorem}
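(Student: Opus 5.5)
The plan is to prove the alternative by a compactness--contradiction argument against the De Silva--Savin flatness theory, after normalizing by $A+M$.

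\textbf{Normalization.} Set $\bar z:=z/(A+M)$ whenever $A+M>0$. The degenerate case $A+M=0$ forces $z\equiv0$ on $B_1$, and both alternatives hold trivially. Then $\norm{\bar z}_{L^\infty(B_1)}\leq1$, and $\bar z$ solves a problem of the same type with operators $F_{1,A+M}$ in the sense of \eqref{eq:general-rescaling}, which are $(\lambda,\Lambda)$-elliptic. Its transmission functions are
$$
\bar G_\nu(\bar b):=\frac{G_\nu((A+M)\bar b)}{A+M}.
$$
Conditions \eqref{eq:sharp-large-slope-hypotheses} are invariant under this scaling, with the threshold becoming $\bar M:=M/(A+M)\leq1$. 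It therefore suffices to show the following: either $\theta^{-1}\norm{\bar z}_{L^\infty(B_\theta)}\leq\tfrac12\bar A+C\bar M$, or $\norm{\nabla\bar z}_{L^\infty(B_\theta)}\leq C$.

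\textbf{Case of small amplitude.} If $\bar A\leq 2C\bar M$ for a large $C$, the threshold $\bar M$ is comparable to the amplitude. In this regime I would use only the Lipschitz theory for two-phase problems whose slopes are controlled at the scale $M$. Since $\bar G_\nu$ is increasing and $|\bar G_\nu(\bar b)-\bar b|\lesssim \omega_*\bar b+\bar M$, the transmission condition is a perturbation of continuity of the gradient. The Lipschitz estimate of De Silva--Savin (Lipschitz regularity for two-phase problems with $G$ close to the identity) then gives $\norm{\nabla\bar z}_{L^\infty(B_{1/2})}\leq C$. This is the Lipschitz alternative.

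\textbf{Main case: contradiction by compactness.} Suppose the statement fails along a sequence $z_k$ with operators $F_k^\pm$, transmission functions $G^k$, and thresholds $M_k$. Then both \eqref{eq:sharp-decay} and \eqref{eq:sharp-lipschitz} fail with constants $C_k\to\infty$. The failure of \eqref{eq:sharp-lipschitz} with $C_k\to\infty$ means the normalized gradients blow up somewhere in $B_\theta$. The failure of \eqref{eq:sharp-decay} forces $\bar M_k\to0$, and it keeps $\norm{\bar z_k}_{L^\infty(B_\theta)}\geq\tfrac12\theta\bar A_k$.

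Next I would use the uniform interior Hölder estimate for two-phase viscosity solutions whose transmission is a monotone map comparable to the identity. This holds because each phase solves Pucci inequalities across the free boundary once $G$ is increasing and $G(b)\asymp b$ for large $b$. With it, the $\bar z_k$ converge locally uniformly to some $\bar z_\infty$, and the operators converge by \Cref{EllipticOperatorCompactnessAndViscosityStability}. The hypotheses \eqref{eq:sharp-large-slope-hypotheses} with $\bar M_k\to0$ show that the $\bar G^k_\nu$ converge to a map $G^\infty_\nu$ satisfying $|(G^\infty_\nu)'-1|\leq\omega_*$ on $(0,\infty)$ and $G^\infty_\nu(0)=0$.

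Hence $\bar z_\infty$ solves a two-phase problem with homogeneous transmission law that is $\omega_*$-close to continuity of the gradient. For $\omega_*$ small, this is an almost-transmission problem. De Silva--Savin's classification and Lipschitz/$C^{1,\alpha}$-type regularity for such problems gives $\norm{\nabla\bar z_\infty}_{L^\infty(B_{1/2})}\leq C_0$. It also gives affine approximation at scale $\theta$, in the form $\theta^{-1}\mathrm{osc}$ being controlled.

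The contradiction then runs through the improvement-of-flatness alternative. Either $\bar z_\infty$ is flat enough that the flatness decay passes back to $\bar z_k$, contradicting the failure of \eqref{eq:sharp-decay}. Or the uniform Lipschitz bound for the limit propagates to $\bar z_k$ through the De Silva--Savin $\epsilon$-regularity theorem, contradicting the failure of \eqref{eq:sharp-lipschitz}.

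\textbf{Main obstacle.} I expect the hardest step to be the last one: transferring a Lipschitz bound from the limit back to the sequence, since uniform convergence alone does not control gradients. I would handle it with a flatness-based $\epsilon$-regularity argument. Once $\bar z_k$ is within $\epsilon$ of a two-plane or plane solution on $B_{1/2}$, improvement of flatness iterates down to every scale. This gives $C^{1,\alpha}$ on both sides, hence the Lipschitz bound with universal constants. The direction-dependence condition in \eqref{eq:sharp-large-slope-hypotheses} is exactly what keeps the linearized transmission problem a small perturbation under tilts of the normal.
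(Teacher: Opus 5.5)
The paper gives no independent proof of this statement. It imports it from the De Silva--Savin oscillation and improvement-of-flatness theorems, and adds only the remark on normal dependence: track the normal $\nu_k$ of the approximating two-plane, use $|\nu_{k+1}-\nu_k|\leq C\varepsilon_k$ and the second line of \eqref{eq:sharp-large-slope-hypotheses} to absorb the change of law into the flatness error, and note that $L^{-1}G_\nu(Lt)$ preserves the hypotheses. Your normalization and your comment on the $\nu$-condition agree with this.

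Your compactness derivation is a different route, but as written its central step is circular. You keep $\omega_*$ fixed, so $\bar z_\infty$ still solves a genuine free-boundary problem with law $G^\infty_\nu\neq\mathrm{id}$. Your closing dichotomy needs $\bar z_\infty$ to be close, at scale $2\theta$, to a two-plane: small slope gives decay, and slope at least $a_*$ gives the nondegenerate flatness that $\epsilon$-regularity requires. A Lipschitz bound on the limit does not give this. A uniform two-plane approximation at a fixed scale, valid for every law $\omega_*$-close to the identity, is essentially the $M=0$ case of the statement itself, so citing it for the limit assumes what is to be proved.

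The standard repair is to let $\omega_*=\omega_k\to0$ along the contradiction sequence as well. Then, given the zeroth-order control discussed below, the following chain goes through:
\begin{itemize}
\item $\bar G^k_\nu\to\mathrm{id}$;
\item a kink-perturbation comparison shows $F_\infty(D^2\bar z_\infty)=0$ across the interface;
\item \eqref{eq:homogeneous-affine-approximation} gives $\norm{\bar z_\infty-p\cdot x}_{L^\infty(B_{2\theta})}\leq C\theta^{1+\bar\alpha}$;
\item non-decay forces $|p|\geq a_*$;
\item for large $k$, $\bar z_k$ is nondegenerately flat in $B_{2\theta}$ with slopes far above $\bar M_k$, so the direction-dependent improvement of flatness gives the Lipschitz bound in $B_\theta$ at every point and scale.
\end{itemize}
This also settles the transfer problem you flag. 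It is exactly the scheme of the paper's proof of \Cref{prop:diffuse-DSS}, where the limit has no interface because the normalized strength vanishes.

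Two further points. First, your small-amplitude case needs no Lipschitz theory. If $\bar A\leq 2C\bar M$, then $\theta^{-1}\norm{\bar z}_{L^\infty(B_\theta)}\leq 2C\theta^{-1}\bar M$, so \eqref{eq:sharp-decay} holds after enlarging its constant. Conversely, a De Silva--Savin Lipschitz estimate with these uniform constants would prove the whole theorem at once.

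Second, and more seriously, the following do not follow from \eqref{eq:sharp-large-slope-hypotheses}: $G^\infty_\nu(0)=0$, the bound $|\bar G_\nu(\bar b)-\bar b|\lesssim\omega_*\bar b+\bar M$, and even the compactness of $\{\bar G^k_\nu\}$. That hypothesis constrains only $G_\nu'$, $bG_\nu''$, and the $\nu$-modulus; for instance $G_\nu(b)=b+K$ satisfies it with $M=0$. A zeroth-order condition such as $|G_\nu(b)-b|\leq\omega_*b$ for $b\geq M$ has to be added. The paper's slope maps satisfy it by \eqref{eq:Glarge-slope-regime}. Without it the statement is false. For $n\geq3$ and $\rho\leq\theta/2$, take the one-phase solution $z=\frac{K\rho}{n-2}\bigl(1-(\rho/|x|)^{n-2}\bigr)^+$, which has $z_\nu^+=K=G(0)$ on $\partial B_\rho$. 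It satisfies $A\leq K\rho/(n-2)$, $\theta^{-1}\norm{z}_{L^\infty(B_\theta)}\geq A/(2\theta)$, and $\norm{\nabla z}_{L^\infty(B_\theta)}=K$, so both alternatives fail as $\rho\to0$.
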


\begin{remark}[Dependence on the normal]
De Silva and Savin formulate the argument for
$u_\nu^+=G(u_\nu^-)$ and note that it extends to transmission functions
depending on the normal; see \cite[Section~3.2]{DeSilvaSavin}.  The
adaptation is the standard continuous-perturbation argument used in
Caffarelli's free-boundary theory
\cite{CaffarelliI,CaffarelliII,CaffarelliIII}.  One tracks, in addition to the incoming
slope, the normal of the approximating two-plane.  At a flatness step,
$|\nu_{k+1}-\nu_k|\leq C\varepsilon_k$, and the second inequality in
\eqref{eq:sharp-large-slope-hypotheses} gives
$$
|G_{\nu_{k+1}}(b)-G_{\nu_k}(b)|
\leq C\omega_*b\varepsilon_k.
$$
This is of the same order as the flatness error and is absorbed in the
Harnack and improvement-of-flatness iteration.  Thus one needs only a
uniform modulus of $G_\nu$ over the cone of normals occurring at the current
scale, together with the usual uniform bounds for the first two derivatives
in the slope variable.  The blow-down
$\widetilde G_\nu(t)=L^{-1}G_\nu(Lt)$ preserves these estimates.
\end{remark}

For the slope maps in \eqref{eq:G}, one may take
\begin{equation}\label{eq:natural-large-slope-threshold}
M=K\sqrt\gamma,
\end{equation}
where $K$ is a sufficiently large universal constant. Indeed,
\eqref{eq:Gidentity} gives
$$
|G'(b)-1|+b|G''(b)|
\leq
C\frac{\gamma}{b^2},
$$
whereas \eqref{eq:Gdirection} gives
$$
|G_\nu(b)-G_\mu(b)|
\leq
C\frac{\gamma}{b^2}\,b|\nu-\mu|.
$$
Choosing $K$ large enough makes both constants smaller than $\omega_*$. In
particular, the threshold in the sharp-interface theory has exactly the
first-order size $\sqrt\gamma$ dictated by the diffuse equation.

\subsection{Uniform H\"older compactness}

The stability statements of \Cref{lem:compactness,lem:sharp-stability}
identify the limiting equation once local uniform convergence is known.  We
now supply that convergence by proving an estimate which is uniform in the
height $\gamma/\eta$ of the diffuse source.  The point is that spatial
rescaling by a factor $r$ replaces the strength $\gamma$ by $\gamma r^2$.
Thus the corrected-contact argument becomes perturbative at sufficiently
small spatial scales, regardless of the thickness $\eta$.

\begin{lemma}[Uniform diffuse H\"older estimate]
\label{lem:uniform-diffuse-holder}
There exist $\alpha_0\in(0,1)$ and $C<\infty$, depending only on
$n$, $\lambda$, $\Lambda$, $c_0$, and $\beta$, such that the following
holds.  Let $F:\Sn\to\mathbb R$ be normalized and
$(\lambda,\Lambda)$-elliptic, and let $w\in C(B_1)$ solve
\begin{equation}\label{eq:uniform-holder-equation}
F(D^2w)
=
\frac{\gamma}{\eta}
\beta\left(\frac{w+\sigma}{\eta}\right)
\qquad\text{in }B_1,
\end{equation}
where
$$
0<\gamma\leq1,
\qquad
\eta>0,
\qquad
|\sigma|\leq c_0\eta,
\qquad
\norm{w}_{L^\infty(B_1)}\leq1.
$$
Then
\begin{equation}\label{eq:uniform-holder-estimate}
[w]_{C^{0,\alpha_0}(B_{1/2})}
\leq C.
\end{equation}
In particular, the estimate is independent of $\gamma$, $\eta$, $\sigma$,
and $F$ within the indicated structural class.
\end{lemma}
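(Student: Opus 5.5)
The plan is to prove a one-step oscillation decay at a fixed universal ratio and then iterate it. The rescaling in \Cref{lemSingularPerturbationRescaling} is what makes the iteration work. If at step $k$ we set $v_k(x):=(w(r^kx)-c_k)/\mu^k$ with $\mu:=r^{\alpha_0}$, the new strength is $\gamma_k=\gamma r^{2k}\mu^{-2k}=\gamma r^{2k(1-\alpha_0)}$. This stays $\le1$ and tends to zero geometrically. The new thickness $\eta/\mu^k$ and shift are arbitrary, but $|\sigma_k|\le c_0\eta_k$ is preserved after recentring the level, since $c_k$ is absorbed into $\sigma_k$. So it suffices to prove the following. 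There exist universal $r,\mu\in(0,1)$ and $\varepsilon_0>0$ such that, if $w$ solves \eqref{eq:uniform-holder-equation} in $B_1$ with $\osc_{B_1}w\le2$ and $\gamma\le\varepsilon_0$, then $\osc_{B_r}w\le2\mu$. We choose $r$ small enough that $\gamma r^{2(1-\alpha_0)}\le\varepsilon_0$ already after the first step; the first step itself, where $\gamma$ may be as large as $1$, is handled by rescaling to a ball of radius $\sqrt{\varepsilon_0}$ first. Covering then gives \eqref{eq:uniform-holder-estimate}. Note that the decay lemma must not be proved by a compactness argument requiring equicontinuity, which would be circular. It has to be proved directly by measure-to-pointwise estimates.

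Write $M:=\sup_{B_1}w$, $m:=\inf_{B_1}w$, and $\ell:=(M+m)/2$. Either $\{w\le\ell\}$ or $\{w\ge\ell\}$ occupies at least half of $B_{1/4}$. The upper side is free. Since $\beta\ge0$, $F(D^2w)\ge0$, hence $\mathcal P^+(D^2w)\ge0$ by \eqref{eq:pucci-control}. Thus $M-w\ge0$ is a supersolution of $\mathcal P^-\le0$, with no source. The weak Harnack inequality \cite{CaffarelliCabre}, applied in the first case, gives $M-w\ge c(M-m)$ on $B_{1/8}$. This lowers the supremum by a universal fraction, independently of $\gamma,\eta,\sigma$.

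The main obstacle is the second case, where we need $w-m\ge c(M-m)$ on a smaller ball. Here $w-m$ is a supersolution only up to the source $(\gamma/\eta)\beta$, which is unbounded in $\eta$. Neither weak Harnack nor ABP with an $L^n$ right-hand side is uniform. I would replace weak Harnack by the usual barrier proof of the measure-to-pointwise estimate, with the curved correction from the proof of \Cref{lem:compactness} built into the barrier.

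The construction is as follows. Take the standard radial barrier $\psi$ of Caffarelli--Cabr\'e, with $\mathcal P^-(D^2\psi)\ge 4a>0$ in an annulus, $\psi$ positive on a small ball, and $\psi\le0$ near $\partial B_1$. The key extra property is $|\nabla\psi|\ge m_0>0$ on the annulus. Set
$$
\Psi:=m+c\,\psi+\gamma\eta\,h\!\left(\frac{m+c\psi+\sigma}{\eta}\right),
$$
with $h$ convex, $h(0)=0$, $h'$ bounded, and $\lambda c^2m_0^2h''\ge4\norm{\beta}_{L^\infty}$ on $[-2,2]$. As in \eqref{eq:corrected-test-hessian}, the extra term $(\gamma/\eta)h''\,c^2\nabla\psi\otimes\nabla\psi$ dominates the reaction exactly where it is active. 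The perturbation of $\Psi$ itself is $O(\gamma\eta\|h'\|)$. This is small relative to $c$ provided $\eta$ is controlled, for instance $\eta\le C$, which follows from $|w|\le1$ once the layer meets $B_1$; if it does not, the equation is homogeneous and there is nothing to prove.

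The inner ball, where $\psi$ fails to be a strict subsolution, is handled as in the classical proof. One uses the set $\{w\ge\ell\}$ of large measure through an ABP argument, applied to the source-free equation on that part, or one performs the comparison only on the annulus around a density point. I expect the delicate points to be two. The first is making the comparison uniform when the level set $\{|w+\sigma|<\eta\}$ passes through the annulus. The second is choosing $\varepsilon_0$ so that the $\gamma$-dependent losses, of the form $C\gamma$ as in the proof of \Cref{lem:compactness}, are absorbed by $a$. Once this lower-side estimate is established, the two cases give $\osc_{B_r}w\le(1-c)\osc_{B_1}w$. The iteration of the first paragraph then yields \eqref{eq:uniform-holder-estimate} with $\alpha_0=\log(1-c)/\log r$.
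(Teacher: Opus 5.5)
\textbf{There is a genuine gap: the lower-side estimate in your second case is not proved.} Your first case is correct and uniform. Since $\beta\geq0$, $M-w$ is a nonnegative, source-free supersolution, and weak Harnack lowers the supremum. The trouble is the second case, exactly the step you defer to ``the classical proof.''

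Your corrected barrier $\Psi$ is a strict subsolution only in the annulus. Sliding it therefore requires a \emph{pointwise} bound $w\geq m+c\max\psi$ on the inner sphere. You only know $w\geq\ell$ on a set of half measure, and neither proposed fix bridges this:
\begin{itemize}
\item A density point of $\{w\geq\ell\}$ gives no pointwise bound on any sphere.
\item The ABP contact set lies where $w-m$ is comparable to $\inf_{B_{1/8}}(w-m)$, that is, near the bottom of the range. When the layer $\{|w+\sigma|<\eta\}$ sits at the bottom, this set can lie inside the layer. There $w$ is touched from below by $\Psi$ plus an affine function $L$ with $|\nabla L|=O(1)$. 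This changes both the value at which $\beta$ is evaluated and the direction along which $h''$ acts; $\nabla(\Psi+L)$ may even vanish. So the corrector absorbs nothing, and the $L^n$ norm of $(\gamma/\eta)\beta$ on the contact set is not bounded uniformly in $\eta$.
\end{itemize}
As written, the measure-to-pointwise step, which is the whole content of the lemma, is not established.

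\textbf{The missing ingredient is the phase decomposition the paper uses.} Set $m_\pm:=-\sigma\pm\eta$. The reaction vanishes on $\{w<m_-\}$, so $(m_--w)^+$ is a nonnegative subsolution in the whole ball. Equivalently, $\min(w,m_-)$ is a \emph{global} source-free supersolution. This handles all but one configuration:
\begin{itemize}
\item If $m_--m\geq\theta\osc w$, weak Harnack applied to $\min(w,m_-)-m$ raises the infimum.
\item If the layer is thick relative to the oscillation, the source $\gamma/\eta$ is small, which is harmless.
\item If the layer is thin and sits at the bottom of the range, a barrier is needed. This is roughly the situation where, in the De Silva--Savin argument the paper follows, one phase alternative persists.
\end{itemize}
In that last case, the full ball you lack comes from the interior Harnack inequality for $w-m_+$ in the homogeneous phase $\{w>m_+\}$, around a point where $w\geq\ell$. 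Start your corrected barrier from that ball; the exact profile composed with a radial defining function, as in \Cref{lem:curved-planar-barriers}, works equally well. The barrier crosses the layer with slope $\gg\sqrt\gamma$. It forces $\{w\leq m_+\}$ to stay a fixed distance away, and Harnack in that phase finishes the argument.

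\textbf{Two smaller points.} First, recentring does not preserve $|\sigma_k|\leq c_0\eta_k$: the bound $|\sigma+c_k|\leq c_0\eta$ fails when $c_k$ is far from $-\sigma$. Simply state the one-step lemma for arbitrary $\sigma$, which the case analysis above permits. Second, $\eta\leq C$ does not follow from $|w|\leq1$, though large $\eta$ is harmless.
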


\begin{proof}
We follow the oscillation argument in
\cite[Theorem~2.2]{DeSilvaSavin}, replacing its sharp two-plane
comparisons by the exact diffuse profiles of
\Cref{lem:curved-planar-barriers}.  We give the details that ensure
uniformity in the thickness.

Set
$$
m_-:=-\sigma-\eta,
\qquad
m_+:=-\sigma+\eta.
$$
The phase excesses
$$
w^+:=(w-m_+)^+,
\qquad
w^-:=(m_--w)^+
$$
are nonnegative viscosity subsolutions, in the whole ball, of uniformly
elliptic homogeneous equations with the same ellipticity constants.  Indeed,
the reaction vanishes wherever either excess is positive, and the extension
by zero preserves the corresponding Pucci subsolution inequality.  The weak
Harnack inequality therefore gives the two measure alternatives used in the
proof of \cite[Theorem~2.2]{DeSilvaSavin}: at each dyadic scale, one of the
two-phase excesses loses a fixed fraction of its oscillation in the
concentric half-ball.

The only place where the sharp free-boundary condition enters that proof is
when the same phase alternative persists through a fixed number of
successive scales.  The radial two-plane barrier used there has incoming
and outgoing slopes in a fixed universal ratio.  By
\eqref{eq:Jbounds}, there exist universal constants $s_0\in(0,1)$ and
$M_0<\infty$ such that the exact diffuse slope maps satisfy
\begin{equation}\label{eq:holder-slope-comparability}
s_0 b
\leq
G_{F,\nu,\gamma,\eta}(b)
\leq
s_0^{-1}b
\qquad
\text{for every }b\geq M_0,
\end{equation}
uniformly in $F$, $\nu$, $0<\gamma\leq1$, and $\eta>0$.
At the crossing step, replace the two-plane barrier by the planar profile
with the same incoming slope, increase or decrease the strength by a fixed
factor, and compose with the radial defining function of the barrier.
\Cref{lem:curved-planar-barriers} gives the required strict differential
inequality.  The first-contact argument is then identical to the one in
\cite[Theorem~2.2]{DeSilvaSavin}.  Estimate
\eqref{eq:holder-slope-comparability} is exactly the quantitative input used
there, and all constants are independent of $\eta$.

This proves a universal one-step oscillation reduction.  Its iteration is
stable under the rescaling
$$
\widetilde w(x)
:=
\frac{w(x_0+rx)-w(x_0)}{r^{\alpha_0}},
$$
because the rescaled strength is
$$
\widetilde\gamma
=
r^{2-2\alpha_0}\gamma
\leq1
$$
when $0<\alpha_0<1$, while the thickness and shift are divided by
$r^{\alpha_0}$ and the shifted-layer bound is preserved whenever the
iteration is centered in the transition region.  If a rescaled ball no
longer meets that region, the homogeneous interior H\"older estimate takes
over.  Thus the oscillation iteration gives
$$
\osc_{B_r(x_0)}w
\leq Cr^{\alpha_0}
$$
for $x_0\in B_{1/2}$ and $0<r\leq1/4$.  Points whose distance from the
transition region is larger than $2r$ are covered directly by the
homogeneous estimate; the remaining points are compared with a nearest
transition point.  This proves \eqref{eq:uniform-holder-estimate}.
\end{proof}

\begin{corollary}[Uniform diffuse compactness]
\label{lem:bounded-diffuse-compactness}
Let $F_j:\Sn\to\mathbb R$ be normalized
$(\lambda,\Lambda)$-elliptic operators, and let $w_j\in C(B_1)$ satisfy
\begin{equation}\label{eq:bounded-diffuse-sequence}
F_j(D^2w_j)
=
\frac{\gamma_j}{\eta_j}
\beta\left(\frac{w_j+\sigma_j}{\eta_j}\right)
\qquad\text{in }B_1,
\end{equation}
where
$$
0<\gamma_j\leq1,
\qquad
\eta_j>0,
\qquad
|\sigma_j|\leq c_0\eta_j,
\qquad
\norm{w_j}_{L^\infty(B_1)}\leq1.
$$
Then $\{w_j\}$ is locally precompact in $C(B_1)$.  After passage to a
subsequence,
$$
F_j\longrightarrow F_\infty
\quad\text{locally uniformly on }\Sn,
\qquad
w_j\longrightarrow w
\quad\text{locally uniformly in }B_1.
$$
If, in addition, $\gamma_j\to0$, then
\begin{equation}\label{eq:bounded-diffuse-limit}
F_\infty(D^2w)=0
\qquad\text{in }B_1.
\end{equation}
If instead $\eta_j\to0$ while the strengths remain bounded, the limit is
described by \Cref{lem:sharp-stability}.
\end{corollary}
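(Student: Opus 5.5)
The plan is to combine the uniform H\"older bound of \Cref{lem:uniform-diffuse-holder} with Arzel\`a--Ascoli, and then to identify the limit equation using the stability lemmas already proved. The corollary contains no new analysis.

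First, compactness. Each $w_j$ satisfies the hypotheses of \Cref{lem:uniform-diffuse-holder} with the same $c_0$, $\beta$ and ellipticity constants. Hence $[w_j]_{C^{0,\alpha_0}(B_{1/2})}\leq C$, and together with $\norm{w_j}_{L^\infty(B_1)}\leq1$ this makes the family uniformly bounded and equicontinuous on $B_{1/2}$. To get local precompactness on all of $B_1$, fix $x_0\in B_1$ and $0<r<1-|x_0|$. Apply the lemma to the translated and dilated function $\widetilde w_j(x):=w_j(x_0+rx)$ with $A=1$ in \eqref{eq:general-rescaling}. By \Cref{lemSingularPerturbationRescaling}, the operator $(F_j)_{r,1}$ is normalized with the same ellipticity constants, the strength becomes $r^2\gamma_j\leq1$, and the thickness and shift are unchanged, so $|\sigma_j|\leq c_0\eta_j$ still holds. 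This gives a uniform H\"older bound on $B_{r/2}(x_0)$. Covering any compact $K\subset B_1$ by finitely many such balls gives equicontinuity on $K$. A diagonal Arzel\`a--Ascoli argument over an exhaustion of $B_1$ then yields a subsequence with $w_j\to w$ locally uniformly. Separately, the bounds \eqref{eq:pucci-control} and \eqref{eq:pucci-difference} show the $F_j$ are uniformly bounded and uniformly Lipschitz on bounded subsets of $\Sn$. By the compactness recorded in \Cref{EllipticOperatorCompactnessAndViscosityStability}, a further subsequence has $F_j\to F_\infty$ locally uniformly, with $F_\infty$ normalized and $(\lambda,\Lambda)$-elliptic, since both properties pass to the limit.

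Second, the limit equation. If $\gamma_j\to0$, then all hypotheses of \Cref{lem:compactness} hold with $\Omega=B_1$, including the bound $|\sigma_j|\leq c_0\eta_j$, and it gives \eqref{eq:bounded-diffuse-limit} directly. If instead $\eta_j\to0$ and $\gamma_j\leq\gamma_0$, the sequence satisfies the hypotheses of \Cref{lem:sharp-stability}. After a further subsequence, that lemma describes $w$ through the homogeneous equation in the two phases and the transmission condition \eqref{eq:sharp-law}.

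The only step needing care is extending the interior estimate from $B_{1/2}$ to arbitrary compact subsets of $B_1$. The point to check is that rescaling preserves the structural class: the strength decreases under spatial shrinking, and the constraint $|\sigma|\leq c_0\eta$ is untouched because no amplitude normalization is performed. Once this is verified, the rest is routine diagonal extraction.
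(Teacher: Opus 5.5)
Your proposal is correct and follows the paper's proof. It uses the uniform H\"older estimate of \Cref{lem:uniform-diffuse-holder} with Arzel\`a--Ascoli for the $w_j$, the Pucci bounds for operator compactness, and then \Cref{lem:compactness} and \Cref{lem:sharp-stability} to identify the limit. Your rescaling $w_j(x_0+rx)$, which keeps the thickness and shift and replaces the strength by $r^2\gamma_j\leq1$, makes explicit the step the paper compresses into ``a common modulus of continuity on every compact subset of $B_1$.''
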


\begin{proof}
The operator compactness follows from \eqref{eq:pucci-difference}, and
\Cref{lem:uniform-diffuse-holder} gives a common modulus of continuity on
every compact subset of $B_1$.  Arzel\`a--Ascoli yields the asserted local
uniform convergence.  The final two conclusions follow respectively from
\Cref{lem:compactness,lem:sharp-stability}.
\end{proof}

We next record the finite-thickness version of nondegenerate flatness.  This
is the precise mechanism by which the Lipschitz branch of the sharp
De Silva--Savin alternative is transferred back to the diffuse equation.

\begin{lemma}[Finite-thickness transfer of nondegenerate flatness]
\label{lem:finite-thickness-transfer}
There is a universal $K_0\geq1$ with the following property.  There exist
$\varepsilon_f>0$ and $C<\infty$, depending only on the structural data,
such that, if $u\in C(B_1)$ solves
$$
F(D^2u)
=
\frac{\gamma}{\eta}
\beta\left(\frac{u+\sigma}{\eta}\right)
\qquad\text{in }B_1,
$$
where $F$ is normalized and uniformly elliptic,
$$
0<\gamma\leq1,
\qquad
\eta>0,
\qquad
|\sigma|\leq c_0\eta.
$$
Let $\Phi$ be a translate, in its one-dimensional variable, of an exact
planar profile from \eqref{eq:planar-profile}, with direction
$\nu\in\mathbb S^{n-1}$ and incoming slope $b$ satisfying
$$
b\geq K_0\sqrt\gamma.
$$
If
\begin{equation}\label{eq:finite-thickness-flatness}
\norm{u-\Phi}_{L^\infty(B_1)}
\leq\varepsilon_f b,
\end{equation}
then
\begin{equation}\label{eq:finite-thickness-lipschitz}
\norm{\nabla u}_{L^\infty(B_{1/2})}
\leq Cb.
\end{equation}
The constants are independent of the thickness $\eta$.
\end{lemma}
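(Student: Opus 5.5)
The plan is to normalize by $b$, run a flatness-improvement iteration that is uniform in the thickness, and convert the resulting affine trapping into the gradient bound. Set $\widetilde u:=u/b$ and $\widetilde\Phi:=\Phi/b$, and use $\widetilde F(M):=b^{-1}F(bM)$. By \Cref{lemSingularPerturbationRescaling}, $\widetilde u$ solves the shifted equation \eqref{eq:shifted} with strength $\widetilde\gamma=\gamma/b^2\leq K_0^{-2}$, thickness $\widetilde\eta=\eta/b$ and shift $\widetilde\sigma=\sigma/b$, so $|\widetilde\sigma|\leq c_0\widetilde\eta$ still holds. The profile $\widetilde\Phi$ has incoming slope $1$. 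By \eqref{eq:Glarge-slope-regime}, its outgoing slope lies in $[1,1+C/K_0^2]$, and its direction dependence is controlled by \eqref{eq:Glarge-slope-direction}. We may therefore take $b=1$, $\gamma\leq K_0^{-2}$, and $\norm{u-\Phi}_{L^\infty(B_1)}\leq\varepsilon_f$. The goal becomes $\norm{\nabla u}_{L^\infty(B_{1/2})}\leq C$, uniformly in $\eta$. We fix $K_0$ so large that \eqref{eq:natural-large-slope-threshold} places every slope near $1$ in the large-slope regime of \Cref{thm:sharp-decay-alternative}.

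The core step is a diffuse improvement-of-flatness lemma. Suppose $|u-\Phi|\leq\varepsilon$ in $B_1$, where $\Phi$ is an exact planar profile with direction $\nu$ and incoming slope $b'\in[1/2,2]$. We aim to show that $|u-\Phi'|\leq\varepsilon\rho^{1+\alpha}$ in $B_\rho$ for a new exact profile $\Phi'$ whose direction and slope change by $O(\varepsilon)$. Here $\rho$ is a universal radius and $\varepsilon\leq\varepsilon_f$. We argue by contradiction with sequences $\varepsilon_j\to0$, $\eta_j$ arbitrary, and $\gamma_j\leq K_0^{-2}$.

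\emph{Regime 1: thickness comparable to the scale} ($\eta_j\gtrsim 1$). The source $\gamma_j/\eta_j$ is bounded. Since $\Phi_j$ solves the same equation, the difference $(u_j-\Phi_j)/\varepsilon_j$ solves a linear uniformly elliptic equation in Pucci form. The nonlinearity $\beta$ contributes a zeroth-order term of size $\gamma_j\eta_j^{-2}\norm{\beta'}$. This term needs a Lipschitz $\beta$; if $\beta$ is only continuous, we use instead a sup/inf-convolution comparison with profiles of strength $(1\pm\tau)\gamma$ via \Cref{lem:curved-planar-barriers}. Classical $C^{1,\bar\alpha}$ estimates then give the improvement.

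\emph{Regime 2: collapsing thickness} ($\eta_j\to0$). We need a diffuse Harnack inequality for $u_j-\Phi_j$ in the flat regime. It is proved exactly as in De Silva--Savin, using the curved planar barriers of \Cref{lem:curved-planar-barriers} in place of two-plane barriers. This gives compactness of $(u_j-\Phi_j)/\varepsilon_j$ off a set of vanishing width. The limit solves the linearized transmission problem with a Neumann-type coupling, whose coefficients come from $G'_\nu\approx1$ (via \Cref{lem:sharp-stability}). Its $C^{1,\alpha}$ regularity then yields the contradiction.

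Granting the lemma, we iterate it at every center $x_0\in B_{1/2}\cap\{|u|\leq c_0\eta\}$. Rescaling $u(x_0+rx)/r$ keeps slopes of order $1$, decreases the strength to $\gamma r^2$, and increases the thickness to $\eta/r$. Once $\eta/r\gtrsim1$ we are in Regime 1 and close with \eqref{eq:scaled-interior-gradient}. The slopes and directions change geometrically, by $\sum\varepsilon\rho^{k\alpha}$, so $\Lip u(x_0)\leq C$. Points outside the enlarged layer lie at distance $d$ from a controlled layer point. The flatness gives $\osc_{B_{d/2}}u\leq Cd$, and the homogeneous interior estimate finishes the argument, as in \Cref{lem:phases}.

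The main obstacle is the diffuse Harnack inequality and the identification of the linearized limit problem uniformly in $\eta$. The delicate case is the intermediate regime where $\eta$ is comparable to the flatness $\varepsilon$. There the layer is neither resolved nor negligible. I would handle it by comparing against the exact profiles themselves, so that the layer error is absorbed into the one-dimensional translation parameter of $\Phi$.
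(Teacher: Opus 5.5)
Your proposal follows essentially the same route as the paper's proof. You normalize by $b$ so that the strength is at most $K_0^{-2}$, run the De Silva--Savin Harnack and improvement-of-flatness iteration with exact profiles bent through \Cref{lem:curved-planar-barriers} while tracking slope and normal changes, stop at the first scale where the rescaled thickness $\eta/r$ is of unit order and apply the interior estimate there, and reach the homogeneous phases from a nearest layer point. Two minor points. First, under $u(x_0+rx)/r$ with $F_r(M)=rF(M/r)$ the strength stays equal to $\gamma$ rather than becoming $\gamma r^2$; this is harmless, since you only use $\gamma\le K_0^{-2}$. Second, the Lipschitz-$\beta$ issue in your Regime~1 disappears if the error is measured through normal translates of the profile, because the reaction terms then cancel exactly; this is what the paper does, and what you suggest at the end.
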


\begin{proof}
Divide $u,\Phi,\eta$, and $\sigma$ by $b$, and replace $F$ by
$b^{-1}F(b\,\cdot)$.  The incoming slope becomes one, while the new strength
is $\gamma/b^2\leq K_0^{-2}$.  Thus it suffices to work with slopes in a
fixed compact neighborhood of one and with universally small strength.  We
choose $K_0$ large enough for the large-slope hypotheses in
\Cref{thm:sharp-decay-alternative}.

We indicate precisely how the positive-thickness argument differs from the
sharp-interface improvement of flatness.  If $\eta\geq\eta_0$, for a fixed
universal $\eta_0>0$, the right-hand side is bounded and
\eqref{eq:finite-thickness-lipschitz} follows from
\eqref{eq:interior-gradient-estimate}.  Assume therefore that
$\eta<\eta_0$.

Starting from \eqref{eq:finite-thickness-flatness}, run the nondegenerate
Harnack and improvement-of-flatness iteration of
\cite[Theorems~3.1--3.2]{DeSilvaSavin}, expressed in terms of trapping between two
normal translates of the reference profile.  Every planar comparison in
that proof is replaced by the corresponding exact profile.  Whenever the
comparison surface is bent, its strength is increased or decreased by a
fixed factor and \Cref{lem:curved-planar-barriers} supplies the strict
differential inequality needed at first contact.

There are two additional quantities to track.  First, the slope maps and
their first two derivatives in $b$ remain uniformly controlled on the
compact interval generated by the unit incoming slope, by
\eqref{eq:Jbounds}--\eqref{eq:Gderivatives}.  Second, if the normal changes
from $\nu_k$ to $\nu_{k+1}$ at a flatness step, then
$$
|\nu_{k+1}-\nu_k|\leq C\varepsilon_k
$$
and \eqref{eq:Gdirection} shows that the resulting change in outgoing
slope is $O(\varepsilon_k)$.  It is therefore absorbed by the same geometric
flatness error.  This is exactly the normal-dependence bookkeeping in the
direction-dependent form of \Cref{thm:sharp-decay-alternative}.

Under the natural rescaling
$$
u_r(x):=\frac{u(rx)}r,
\qquad
F_r(M):=rF\left(\frac Mr\right),
$$
the strength remains $\gamma$, the thickness becomes $\eta/r$, and the
planar slope map is unchanged.  Indeed, a direct change of variables in
\eqref{eq:J} gives
\begin{equation}\label{eq:J-natural-scaling}
J_{F_r,\nu}\left(\gamma,\frac\eta r\right)
=
J_{F,\nu}(\gamma,\eta).
\end{equation}
Thus the flatness improves geometrically, with summable changes of slope and
normal, for as long as the rescaled thickness is smaller than $\eta_0$.
Stop at the first scale $r_k$ for which
$$
\eta_0
\leq
\frac{\eta}{r_k}
\leq
\frac{\eta_0}{r_*},
$$
where $r_*\in(0,1)$ is the fixed contraction factor of the flatness
iteration.  At this scale the rescaled equation has a universally bounded
right-hand side, while the trapping gives a universal oscillation bound.
The interior estimate yields a universal gradient bound for $u_{r_k}$.
Since the natural rescaling preserves gradients, the same bound holds for
$u$ at the center.  The construction is uniform when centered at any point
of $B_{1/2}$ whose distance from the diffuse layer is at most a fixed
multiple of the current scale: the initial trapping is preserved after a
normal translation of the reference profile.  Points farther from the
layer lie in a homogeneous phase, and the oscillation needed in the
homogeneous interior estimate is controlled from a nearest layer point.
A finite covering of $B_{1/2}$ therefore gives the asserted bound throughout
$B_{1/2}$.  The finitely many scales preceding the stopping scale are
controlled directly by the profile trapping.  This proves
\eqref{eq:finite-thickness-lipschitz}.
\end{proof}

The next compactness statement concerns the error about a nondegenerate
affine function. Unlike a pure amplitude normalization, subtracting an
affine function leaves that function inside the argument of $\beta$. The
lifted-contact formulation below is therefore essential.

\begin{lemma}[Compactness of nondegenerate affine errors]
\label{lem:nondegenerate-error-compactness}
Let $u_j\in C(B_1)$ solve
\begin{equation}\label{eq:nondegenerate-error-equation}
F_j(D^2u_j)
=
\frac{\gamma_j}{\eta_j}
\beta\left(\frac{u_j+\sigma_j}{\eta_j}\right)
\qquad\text{in }B_1,
\end{equation}
where the $F_j$ are normalized $(\lambda,\Lambda)$-elliptic operators,
$\eta_j>0$, and $|\sigma_j|\leq c_0\eta_j$. Suppose that
$$
\ell_j(x)=b_j+p_j\cdot x,
\qquad
0<a_0\leq|p_j|\leq a_1<\infty,
$$
and that, for some $e_j\downarrow0$,
\begin{equation}\label{eq:nondegenerate-error-assumptions}
\norm{u_j-\ell_j}_{L^\infty(B_1)}\leq e_j,
\qquad
\frac{\gamma_j}{e_j}\longrightarrow0.
\end{equation}
Define
$$
z_j:=\frac{u_j-\ell_j}{e_j},
\qquad
\mathcal F_j(M):=\frac1{e_j}F_j(e_jM).
$$
Then $\{z_j\}$ is locally precompact in $C(B_1)$. After passing to a
subsequence,
$$
z_j\longrightarrow z
\qquad\text{locally uniformly in }B_1,
$$
the operators $\mathcal F_j$ converge locally uniformly to a normalized
$(\lambda,\Lambda)$-elliptic operator $\mathcal F_\infty$, and
\begin{equation}\label{eq:nondegenerate-error-limit}
\mathcal F_\infty(D^2z)=0
\qquad\text{in }B_1.
\end{equation}
\end{lemma}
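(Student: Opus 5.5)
The plan is to establish three things in order: the equation satisfied by $z_j$, the identification of the limit equation by a corrected-test argument modelled on \Cref{lem:compactness}, and, separately, the equicontinuity giving local precompactness. Since $D^2z_j=e_j^{-1}D^2u_j$, each $\mathcal F_j$ is normalized and $(\lambda,\Lambda)$-elliptic, and
$$
\mathcal F_j(D^2z_j)
=
\frac{\gamma_j}{e_j\eta_j}
\beta\left(\frac{\ell_j+e_jz_j+\sigma_j}{\eta_j}\right)
\qquad\text{in }B_1,
\qquad
|z_j|\leq1 .
$$
The right-hand side is nonnegative but of unbounded size $\gamma_j/(e_j\eta_j)$. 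The key structural point is that the argument of $\beta$ contains the nondegenerate affine function $\ell_j$, with $|\nabla\ell_j|\geq a_0$. This plays the role that the tilt \eqref{eq:nonzero-test-gradient} played in \Cref{lem:compactness}, but here it comes for free. Operator compactness follows from \eqref{eq:pucci-difference}, exactly as in \Cref{lem:bounded-diffuse-compactness}.

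\emph{Limit equation, assuming $z_j\to z$ locally uniformly.} Upper tests are immediate: the source is nonnegative, so any $\phi$ touching $z$ from above gives $\mathcal F_\infty(D^2\phi(x_0))\geq0$ by standard stability. For a lower test $\phi$ with $\mathcal F_\infty(D^2\phi(x_0))>0$, I argue by contradiction and slide the corrected family
$$
\Phi_{j,t}
:=
\phi+t+\frac{\gamma_j\eta_j}{e_j}\,h(s_{j,t}),
\qquad
s_{j,t}:=\frac{\ell_j+e_j(\phi+t)+\sigma_j}{\eta_j},
$$
where $h$ is convex, globally Lipschitz, satisfies $h(0)=0$, and $\lambda(a_0/2)^2h''\geq4\norm{\beta}_{L^\infty}$ on $|s|\leq2$. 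Three estimates make this work.
\begin{itemize}
\item The gradient of the phase is $p_j+e_j\nabla\phi$, which has size at least $a_0/2$ for large $j$.
\item The corrector has size at most $C(\gamma_j/e_j)(|\ell_j|+|\sigma_j|+e_j)\leq C\gamma_j/e_j\to0$, which uses exactly the hypothesis $\gamma_j/e_j\to0$ together with $h(0)=0$ and $\norm{h'}_\infty<\infty$.
\item Its Hessian equals $\gamma_jh'D^2\phi$ plus $\frac{\gamma_j}{e_j\eta_j}h''\,(p_j+e_j\nabla\phi)\otimes(p_j+e_j\nabla\phi)$. The first term tends to zero. The second is positive and, wherever the reaction is active, dominates it.
\end{itemize}
Note that $\partial_t\Phi_{j,t}=1+\gamma_jh'\geq\frac12$. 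At the first contact point, the argument of $\beta$ is $s+(\gamma_j)h(s)$, so an active reaction forces $|s|\leq2$. The same computation as in \Cref{lem:compactness} then yields a strict inequality contradicting the viscosity condition for $z_j$.

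\emph{Precompactness; this is the main obstacle.} \Cref{lem:uniform-diffuse-holder} controls $u_j$ but not the magnified errors $z_j$. I would instead prove an asymptotic oscillation decay
$$
\osc_{B_{\theta r}(x_0)}z_j
\leq
(1-c)\osc_{B_r(x_0)}z_j+o_j(1),
$$
uniformly for $B_r(x_0)\Subset B_1$. Iterating this gives $\osc_{B_r}z_j\leq Cr^{\alpha}+o_j(1)$, which suffices for an Arzel\`a--Ascoli argument.

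The subsolution half is clean: $\mathcal P^+(D^2z_j)\geq0$, so the local maximum principle applies directly. For the supersolution half (weak Harnack), I would rerun the ABP/Krylov--Safonov measure argument, which only evaluates the equation at points where a concave paraboloid, or a Caffarelli--Cabr\'e barrier, touches $z_j$ from below. Every such test is replaced by its corrected version $\Phi_{j,t}$ above. The correction is uniformly $O(\gamma_j/e_j)$ in $C^0$, while its Hessian contribution is positive in the direction $p_j$ and absorbs the reaction. Consequently $z_j$ behaves like a supersolution of $\mathcal P^-\leq0$ up to an additive $o_j(1)$, and the weak Harnack inequality holds with that error.

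I expect the delicate bookkeeping to lie here: the corrected barriers must keep their contact sets and ABP measure estimates, since their $C^{1,1}$ norms are uniform apart from the favourable rank-one term. If this becomes unwieldy, a fallback is to use half-relaxed limits. The corrected-test argument shows that $\limsup^*z_j$ and $\liminf_*z_j$ are respectively a sub- and a supersolution for $\mathcal F_\infty$. Equality of the two could then be obtained from the decay estimate in the form $\osc_{B_r}z_j\to0$ as $r\to0$ uniformly in large $j$.
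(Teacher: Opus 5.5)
Your proposal is correct and follows essentially the same route as the paper. Upper contacts are handled by the nonnegativity of the source. Lower contacts use the corrected lift $\ell_j+e_j(\phi+t)+\gamma_j\eta_j h(\cdot)$, with the nondegenerate slope $|p_j|\ge a_0$ taking the place of the auxiliary tilt. Equicontinuity comes from a Krylov--Safonov (sliding-paraboloid) oscillation decay with an $o_j(1)$ error, followed by identification of the limit equation.

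One point to make explicit, as the paper's ``finite-depth iteration'' does: the correction only works for tests with $e_j|\nabla Q|\le a_0/2$ and $\gamma_j|D^2Q|$ small. So the decay estimate should be claimed down to a fixed scale for $j$ large, not uniformly as $r\to0$; that weaker form is all Arzel\`a--Ascoli needs.
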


\begin{proof}
We have $\norm{z_j}_{L^\infty(B_1)}\leq1$, and the operators
$\mathcal F_j$ have the same ellipticity constants as $F_j$. To prove local
compactness, first observe that if $\liminf_j\eta_j>0$, then the equation for
$z_j$ has right-hand side bounded by
$$
\frac{\gamma_j}{e_j\eta_j}\norm\beta_{L^\infty}=o_j(1).
$$
The standard interior H\"older estimate and viscosity stability give the
conclusion directly.  We may therefore assume, after passing to a
subsequence, that $\eta_j\to0$; in particular, $\sigma_j\to0$.

Let a quadratic polynomial $Q$ with bounded Hessian touch
$z_j$ in a compact subball. Its lift
$$
P_j:=\ell_j+e_jQ
$$
touches $u_j$ at the same point. Since $e_j\to0$ and the slopes $p_j$ lie
in a fixed nondegenerate interval,
$$
\frac{a_0}{2}
\leq
|\nabla P_j|
\leq
2a_1
$$
in a fixed neighborhood of the contact for all large $j$.

At upper contacts, the nonnegativity of the reaction gives
$$
\mathcal P^+_{\lambda,\Lambda}(D^2Q)\geq0.
$$
At a lower contact, strictify $Q$ and slide the corrected lift
$$
P_j+t+
\gamma_j\eta_j
h\left(\frac{P_j+t+\sigma_j}{\eta_j}\right),
$$
where $h$ is chosen for the preceding fixed gradient interval. The
zero-order and first-order sizes of the correction, divided by $e_j$, tend
to zero by \eqref{eq:nondegenerate-error-assumptions}.
Its positive normal curvature absorbs the reaction at an active contact,
while at an inactive contact the strict lower-test inequality dominates the
$O(\gamma_j)$ operator error. The same calculation as in
\eqref{eq:corrected-test-hessian} therefore gives
$$
\mathcal P^-_{\lambda,\Lambda}(D^2Q)
\leq o_j(1).
$$
Here every contact is correctable; no truncation in slope space is needed.
The usual sliding-paraboloid oscillation estimate consequently yields (see,
for example, \cite{ImbertSilvestreLargeGradient}), for
$B_{2r}(x_0)\Subset B_1$,
$$
\osc_{B_r(x_0)}z_j
\leq
\vartheta\osc_{B_{2r}(x_0)}z_j+o_j(1).
$$
Finite-depth iteration gives local equicontinuity and hence local uniform
compactness.

It remains to identify the equation. An upper test for $z$ lifts to an
upper test for $u_j$, and therefore gives
$\mathcal F_\infty(D^2Q)\geq0$. A lower test is lifted and corrected exactly
as above. If it violated the reverse inequality by a fixed amount, the
corrected lift would be a strict lower comparison for
\eqref{eq:nondegenerate-error-equation}, which is impossible. Thus
$\mathcal F_\infty(D^2Q)\leq0$ at lower contacts, proving
\eqref{eq:nondegenerate-error-limit}.
\end{proof}

\subsection{Affine continuation through the diffuse layer}

We next isolate the nondegenerate regime. The statement is uniform over
the size of the affine slope; this is indispensable because an iteration
that starts at a bounded slope may, a priori, accumulate arbitrarily large
corrections.

\begin{lemma}[Diffuse affine continuation]
\label{lem:diffuse-affine-continuation}
Fix $a_*>0$. There exist $\varepsilon_*>0$, $\gamma_*>0$, and
$C<\infty$, depending only on the structural data and $a_*$, such that the
following holds. Let $u\in C(B_2)$ solve
\begin{equation}\label{eq:affine-continuation-equation}
F(D^2u)
=
\frac{\gamma}{\eta}
\beta\left(\frac{u+\sigma}{\eta}\right)
\qquad\text{in }B_2,
\end{equation}
where
$$
0<\gamma\leq\gamma_*,
\qquad
\eta>0,
\qquad
|\sigma|\leq c_0\eta.
$$
Suppose that, for an affine function $\ell(x)=b+p\cdot x$,
\begin{equation}\label{eq:initial-affine-trapping}
\norm{u-\ell}_{L^\infty(B_1)}\leq\varepsilon_*,
\qquad
|p|\geq a_*.
\end{equation}
Then
\begin{equation}\label{eq:affine-continuation-lipschitz}
\norm{\nabla u}_{L^\infty(B_{1/2})}
\leq
C(1+|p|).
\end{equation}
The estimate is uniform in the thickness $\eta$.
\end{lemma}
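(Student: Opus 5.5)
The plan is to reduce \Cref{lem:diffuse-affine-continuation} to \Cref{lem:finite-thickness-transfer} by a compactness argument. Affine trapping will be upgraded to trapping by a translated exact planar profile.

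First I normalize the slope. If $|p|$ is large, I divide by $|p|$ in the manner of \eqref{eq:general-rescaling}, replacing $u$ by $u/|p|$, $F$ by $|p|^{-1}F(|p|\,\cdot)$, and $\eta,\sigma$ by $\eta/|p|,\sigma/|p|$. The new strength is $\gamma/|p|^2\leq\gamma_*/a_*^2$, and the flatness becomes $\varepsilon_*/|p|\leq\varepsilon_*/a_*$. So it suffices to treat $a_*\leq|p|\leq\max(a_*,1)$ up to constants, with strength and flatness both small; the conclusion $C(1+|p|)$ rescales correctly. After this reduction $|p|$ lies in a compact nondegenerate interval.

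Next I split according to the thickness. If $\eta\geq\eta_0$ for a fixed universal $\eta_0$, the right-hand side is bounded by $\gamma_*/\eta_0$. In that case \eqref{eq:interior-gradient-estimate} applied on $B_1$ with $\osc u\leq C(1+|p|)$ gives the claim directly. If the layer $\{|u+\sigma|<\eta\}$ does not meet $B_{3/4}$, the equation is homogeneous there and I apply the same estimate.

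Otherwise $\eta<\eta_0$ and I compare $u$ with a planar profile. Set $\nu=p/|p|$ and take the exact profile $\Phi$ of \eqref{eq:planar-profile} with direction $\nu$ and incoming slope $b_{\rm in}=|p|$, translated so that its linear extension on the incoming side agrees with $\ell$. The outgoing slope is $G(|p|)$, and by \eqref{eq:Gidentity} it differs from $|p|$ by $C\gamma/|p|\leq C\gamma_*/a_*$. Since the layer has width $O(\eta/|p|)$, the function $\Phi$ differs from $\ell$ by at most $C(\gamma_*/a_*+\eta_0)$ on $B_1$. Hence
\[
\norm{u-\Phi}_{L^\infty(B_1)}\leq\varepsilon_*+C\gamma_*/a_*+C\eta_0.
\]
The condition $b_{\rm in}=|p|\geq K_0\sqrt{\gamma}$ holds once $\gamma_*\leq a_*^2/K_0^2$. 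Choosing $\varepsilon_*,\gamma_*,\eta_0$ small relative to $\varepsilon_f a_*$, \Cref{lem:finite-thickness-transfer} gives $\norm{\nabla u}_{L^\infty(B_{1/2})}\leq C|p|$.

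The main obstacle is the term $C\eta_0$. Because $\eta_0$ must also keep the bounded-source case universal, it cannot simply be taken small, and the two cases are in tension. I expect to choose $\eta_0$ first, depending on $a_*$ and $\varepsilon_f$, so that $C\eta_0\leq\varepsilon_f a_*/3$. The case $\eta\geq\eta_0$ then still has source at most $\gamma_*/\eta_0$, which is bounded once $\eta_0$ is fixed, so it remains harmless. A second subtlety is that $\sigma$ shifts where the layer sits. I will handle this by fixing the translation of $\Phi$ so that its layer occurs where $\ell=-\sigma$, which costs only $O(\eta)$.

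If a direct choice fails, the fallback is a contradiction argument. It would use \Cref{lem:nondegenerate-error-compactness} to show that flatness relative to $\ell$ forces flatness relative to the profile $\Phi$, and then appeal to \Cref{lem:finite-thickness-transfer} as above.
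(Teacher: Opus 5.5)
Your argument is correct, but it follows a different route from the paper's.

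\emph{The paper's route.} The paper proves the lemma by an affine improvement-of-flatness scheme. It first establishes a one-step alternative, using \Cref{lem:nondegenerate-error-compactness} when $\gamma\leq\kappa e^2$, and the sharp-interface limit together with \Cref{thm:sharp-decay-alternative} and \Cref{lem:finite-thickness-transfer} at the $\sqrt\gamma$ floor. It then iterates: bounded accumulated slopes give the bound on dyadic annuli, and a first-passage blow-up with effective strength $\gamma_j/P_j^2\to0$ excludes unbounded slopes. A final step passes the estimate into the homogeneous phases.

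\emph{Your route.} You convert the affine trapping into exact-profile trapping at the initial scale and invoke \Cref{lem:finite-thickness-transfer} once. This is legitimate: that lemma precedes the present one, is uniform in $\eta$ and in $b\geq K_0\sqrt\gamma$, and the paper's own proof relies on it anyway. Your route shows that, granted the transfer lemma, the statement contains nothing beyond \eqref{eq:Gidentity}. The $\sqrt\gamma$ floor and the possibly non-summable slope increments, which force Steps~2--3 of the paper, never arise, because the exact profile absorbs the slope jump. What the paper's route buys instead is an explicit affine approximation at every scale and the observation that only boundedness of the slopes matters; the price is the delicate first-passage argument.

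\emph{Simplifications.} Your bookkeeping can be streamlined.
\begin{itemize}
\item Translate $\Phi$ so that $\Phi(0)=\ell(0)$. Since $\nabla\Phi=\varphi'\nu$ with $\varphi'\in[|p|,G(|p|)]$ and $\nabla\ell=|p|\nu$, you get $\norm{\Phi-\ell}_{L^\infty(B_1)}\leq G(|p|)-|p|\leq C\gamma/|p|$, with no $\eta$-dependent term. So the $C\eta_0$ loss and the tension you describe are artifacts.
\item With your incoming-side normalization, the discrepancy is instead $G(|p|)-|p|$ times the distance from the layer to the far side of $B_1$. That distance is unbounded when $B_1$ lies far on the outgoing side, so in your version the no-layer case is genuinely needed.
\item Taking $\varepsilon_*\leq\varepsilon_f a_*/2$ and $\gamma_*\leq\min\{1,\,a_*^2/K_0^2,\,\varepsilon_f a_*^2/(2C)\}$ gives $\norm{u-\Phi}_{L^\infty(B_1)}\leq\varepsilon_f|p|$ for every $|p|\geq a_*$ and every $\eta>0$. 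Hence the slope normalization, the split in $\eta$, and the fallback contradiction argument are all unnecessary.
\end{itemize}
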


\begin{proof}
Dividing $u$, $\ell$, $\eta$, and $\sigma$ by $|p|$, and replacing $F$ by
the corresponding amplitude-rescaled operator, reduces the proof to the
case $|p|=1$. The new strength is $\gamma/|p|^2$; since $|p|\geq a_*$,
this only changes the smallness threshold by a factor depending on $a_*$. We
work in this normalization and divide the proof into the one-step
improvement and the continuation of the affine slopes.

\medskip
\noindent
\emph{Step 1: one-step affine improvement.}
After decreasing the contraction factor in
\Cref{thm:sharp-decay-alternative}, if necessary, fix the same number
$r\in(0,1/8)$ so small that the homogeneous affine approximation in
\eqref{eq:homogeneous-affine-approximation} has error at most $r/4$ on
$B_r$. We claim that, after decreasing $\varepsilon_*$ and $\gamma_*$,
whenever
$$
\norm{u-\ell}_{L^\infty(B_1)}\leq e\leq\varepsilon_*,
\qquad
|\nabla\ell|\geq\frac{a_*}{2},
$$
either
\begin{equation}\label{eq:affine-one-step-lipschitz}
\norm{\nabla u}_{L^\infty(B_r)}
\leq C\bigl(1+|\nabla\ell|\bigr),
\end{equation}
or there is an affine function $\ell'$ such that
\begin{align}
\norm{u-\ell'}_{L^\infty(B_r)}
&\leq
r\left(\frac12e+C\sqrt\gamma\right),
\label{eq:affine-one-step-improvement}
\\
|\nabla\ell'-\nabla\ell|
&\leq
C\left(e+\sqrt\gamma\right).
\label{eq:affine-one-step-slope}
\end{align}
In the terminal range $e\leq C_0\sqrt\gamma$, for any fixed $C_0$, the
same alternative holds without the lower bound on $|\nabla\ell|$; the
constant then also depends on $C_0$.

We first prove the claim in the perturbative regime
\begin{equation}\label{eq:affine-perturbative-regime}
\gamma\leq\kappa e^2,
\end{equation}
where $\kappa>0$ is universal and sufficiently small. On a dyadic slope
window
$$
\frac{L}{2}\leq|\nabla\ell|\leq2L,
$$
divide $u$, $\ell$, $e$, $\eta$, and $\sigma$ by $L$. The normalized slope
lies in $[1/2,2]$, the ellipticity constants are unchanged, and
$$
\frac{\gamma/L^2}{(e/L)^2}
=
\frac\gamma{e^2}.
$$
If the assertion failed for arbitrarily small $\varepsilon_*$ and
$\kappa$, a violating sequence would therefore satisfy the hypotheses of
\Cref{lem:nondegenerate-error-compactness}. Its normalized affine errors
would converge locally uniformly to a solution $z$ of a homogeneous
uniformly elliptic equation. Writing $q=\nabla z(0)$, the choice of $r$
gives
$$
\norm{z-z(0)-q\cdot x}_{L^\infty(B_r)}
\leq\frac14r.
$$
For the approximating sequence, the affine functions
$$
\ell_j'(x)
:=
\ell_j(x)+e_jz(0)+e_jq\cdot x
$$
then satisfy
$$
\norm{u_j-\ell_j'}_{L^\infty(B_r)}
\leq\frac13e_jr,
\qquad
|\nabla\ell_j'-\nabla\ell_j|
\leq Ce_j,
$$
contradicting the failure of
\eqref{eq:affine-one-step-improvement}--\eqref{eq:affine-one-step-slope}.
This proves the claim under \eqref{eq:affine-perturbative-regime}, uniformly
over all dyadic slope windows.

It remains to treat the terminal regime in which
$e<\kappa^{-1/2}\sqrt\gamma$. Normalize by $e+\sqrt\gamma$. If the
corresponding thickness is bounded from below, the normalized equation has
a bounded right-hand side and
\eqref{eq:interior-gradient-estimate} gives
\eqref{eq:affine-one-step-lipschitz}. If the thickness collapses,
\Cref{lem:bounded-diffuse-compactness} and \Cref{lem:sharp-stability} produce a
two-phase limit whose transmission maps are subsequential limits of
\eqref{eq:G}. Apply \Cref{thm:sharp-decay-alternative} to this limit.  The
decay branch is stable under local uniform convergence and gives
\eqref{eq:affine-one-step-improvement}, with the additive error
$C\sqrt\gamma$ restored on returning to the original normalization.

In the Lipschitz branch, the De Silva--Savin proof enters, after finitely
many universal rescalings, a nondegenerate flatness class with slopes in a
fixed compact subinterval of $(0,\infty)$.  Local uniform convergence and
the exact-profile comparison imply that the diffuse solutions satisfy
\eqref{eq:finite-thickness-flatness} at the same scale.  The quantitative
positive-thickness transfer is then exactly
\Cref{lem:finite-thickness-transfer}, which gives
\eqref{eq:affine-one-step-lipschitz}.  The dependence of the slope map on
the normal causes no loss: \eqref{eq:Gdirection} controls its oscillation
over the current cone of normals, as explained after
\Cref{thm:sharp-decay-alternative}.  This proves the one-step alternative at
and below the natural floor.  Division by a dyadic slope size $L$ only
replaces $\gamma$ by $\gamma/L^2$, so all constants remain uniform over the
slope windows used above.

\medskip
\noindent
\emph{Step 2: iteration and the bounded-slope regime.}
We first establish the required bound at points of the buffered layer.
Suppose that this pointwise estimate were false, and choose a counterexample
sequence and points $x_j\in B_{1/2}$ satisfying
$$
|u_j(x_j)+\sigma_j|\leq c_0\eta_j.
$$
For
$$
\widetilde u_j(y)
:=
2\bigl(u_j(x_j+y/2)-u_j(x_j)\bigr),
$$
the new thickness is $2\eta_j$, the new shift is
$2(u_j(x_j)+\sigma_j)$, and the shifted-layer bound is preserved.  The
initial affine trapping deteriorates by at most a factor four.  Choosing
$\varepsilon_*$ correspondingly smaller, relabeling the functions, and using
the amplitude normalization made above, we are reduced to a counterexample
sequence centered at the origin with $|p_{j,0}|=1$.

For each member of the sequence, as long as
\eqref{eq:affine-one-step-lipschitz} does not occur, the one-step construction
produces affine functions
$$
\ell_{j,k}(x)=b_{j,k}+p_{j,k}\cdot x.
$$
To lighten notation, we suppress the index $j$ until it is needed.

The resulting functions satisfy
\begin{align}
\norm{u-\ell_k}_{L^\infty(B_{r^k})}
&\leq e_kr^k,
\label{eq:iterated-affine-trapping}
\\
e_k
&\leq2^{-k}e_0+C\sqrt\gamma,
\label{eq:iterated-affine-error}
\\
|p_{k+1}-p_k|
&\leq C(e_k+\sqrt\gamma).
\label{eq:iterated-affine-slope}
\end{align}
Once $e_k$ reaches the $\sqrt\gamma$ floor, the right-hand side of
\eqref{eq:iterated-affine-slope} need not be summable. No convergence of
$p_k$ will be used.

Choose $\varepsilon_*$ so small that the sum of all slope changes before
the first terminal scale is smaller than $a_*/4$. Hence
$|p_k|\geq3a_*/4$ throughout the perturbative part of the iteration. If a
later slope enters $\{|p|<a_*/2\}$, the error is already bounded by
$C\sqrt\gamma$, and the terminal form of the one-step alternative, which
requires no lower slope bound, continues the construction. Thus the affine
trapping is available at every scale unless the Lipschitz branch has already
occurred.

Suppose first that the slopes are uniformly bounded along the entire
counterexample sequence:
$$
P:=\sup_j\sup_k|p_{j,k}|<\infty.
$$
If $r^{k+1}<|x|\leq r^k$, then
\eqref{eq:iterated-affine-trapping}, applied at $x$ and at the origin,
gives
$$
|u(x)-u(0)|
\leq
|p_k||x|+2e_kr^k
\leq
\left(P+\frac{2\sup_ke_k}{r}\right)|x|.
$$
Thus uniformly bounded affine slopes imply a pointwise Lipschitz estimate
for every member of the sequence, even when
the increments in \eqref{eq:iterated-affine-slope} are not summable.
This contradicts the choice of the centered bad points unless the affine
slopes become unbounded along the counterexample sequence.

\medskip
\noindent
\emph{Step 3: exclusion of unbounded slopes.}
It remains to rule out the possibility that the slopes in
\eqref{eq:iterated-affine-trapping} become arbitrarily large. We argue by
contradiction along a sequence of solutions. Choose $L_j\to\infty$, and let
$k_j$ be the first index for which
$$
|p_{j,k_j}|\geq L_j.
$$
Set $P_j:=|p_{j,k_j}|$. By the first-passage property and
\eqref{eq:iterated-affine-slope},
\begin{equation}\label{eq:first-passage-slopes}
P_j\longrightarrow\infty,
\qquad
|p_{j,k}|<L_j\leq P_j\quad\text{for }k<k_j,
\qquad
\frac{P_j}{L_j}\longrightarrow1.
\end{equation}
After recentering at the relevant buffered-layer point, define
\begin{equation}\label{eq:large-slope-blowup}
W_j(x)
:=
\frac{u_j(r^{k_j}x)-u_j(0)}{r^{k_j}P_j}.
\end{equation}
The effective strength of $W_j$ is
$$
\widehat\gamma_j
:=
\frac{\gamma_j}{P_j^2}
\longrightarrow0.
$$
Moreover, \eqref{eq:iterated-affine-trapping} and
\eqref{eq:first-passage-slopes} imply
\begin{equation}\label{eq:large-slope-unit-affine}
\norm{W_j-q_j\cdot x}_{L^\infty(B_1)}\longrightarrow0,
\qquad
|q_j|=1,
\end{equation}
and
\begin{equation}\label{eq:large-slope-linear-upper-growth}
\norm{W_j}_{L^\infty(B_R)}
\leq C R
\end{equation}
at every preceding radius $1\leq R\leq R_j$, where $R_j\to\infty$ is the
radius corresponding to the original unit scale.

We now propagate the nontrivial unit-scale slope through all the preceding
scales.  Put
$$
R_m:=r^{-m},
\qquad
a_{j,m}:=\frac1{R_m}\norm{W_j}_{L^\infty(B_{R_m})},
\qquad
0\leq m\leq k_j.
$$
By \eqref{eq:large-slope-unit-affine}, after passing to a subsequence,
$a_{j,0}\geq1/2$.  We claim that there is a universal $c>0$ such that
\begin{equation}\label{eq:backward-slope-propagation}
a_{j,m}\geq c-o_j(1)
\qquad
\text{for every }0\leq m\leq k_j,
\end{equation}
where the error is uniform in $m$.

Indeed, rescale $B_{R_m}$ to $B_2$ by the natural spatial-amplitude
scaling.  The effective strength remains
$\widehat\gamma_j\to0$, and
\eqref{eq:large-slope-linear-upper-growth} gives a uniform oscillation
bound.  If the rescaled thickness stays positive, the limit is homogeneous
by \Cref{lem:compactness}; if it collapses,
\Cref{lem:sharp-stability} gives a sharp two-phase limit whose transmission
maps satisfy \eqref{eq:sharp-large-slope-hypotheses} with threshold
$M_j=C\sqrt{\widehat\gamma_j}\to0$.  The strict-comparison proof of
\Cref{thm:sharp-decay-alternative}, using the exact curved profiles before
passage to the limit, therefore yields uniformly for $1\leq m\leq k_j$ one
of the following two conclusions:
\begin{align}
a_{j,m-1}
&\leq \frac12a_{j,m}+o_j(1),
\label{eq:backward-decay-branch}
\\
a_{j,0}
&\leq C\bigl(a_{j,m}+o_j(1)\bigr).
\label{eq:backward-lipschitz-branch}
\end{align}
Here \eqref{eq:backward-lipschitz-branch} follows because the Lipschitz
branch at scale $R_m$ controls the solution on every inner ball, in
particular on $B_1$.  At positive thickness this conclusion is supplied by
\Cref{lem:finite-thickness-transfer}; the dependence on the normal is
absorbed through \eqref{eq:Gdirection}.  Thus no regularity assertion is
being passed through local uniform convergence alone.

The claim follows by induction on $m$.  If the Lipschitz branch occurs,
\eqref{eq:backward-lipschitz-branch} and $a_{j,0}\geq1/2$ give the desired
lower bound.  If the decay branch occurs, then
\eqref{eq:backward-decay-branch} gives
$a_{j,m}\geq2a_{j,m-1}-o_j(1)$, and the induction hypothesis applies.
The uniformity of $o_j(1)$ follows by the usual first-failure compactness
argument: a failure at indices $m_j$ would, after rescaling
$B_{R_{m_j}}$, contradict the corresponding strict limiting alternative.
This proves \eqref{eq:backward-slope-propagation}.

At the outer radius $R_j$, the original trapping and the bounded initial
slope give
$$
\frac1{R_j}
\norm{W_j}_{L^\infty(B_{R_j})}
\leq
\frac{C}{P_j}
\longrightarrow0,
$$
which contradicts \eqref{eq:backward-slope-propagation}. Hence the affine
slopes cannot be unbounded. The bounded-slope argument of Step~2 proves
$$
\Lip_{B_{3/4}}u(x_0)
\leq C(1+|p|)
$$
at every buffered-layer point $x_0\in B_{1/2}$.

It remains only to pass this estimate into the homogeneous phases.  Let
$x\in B_{1/2}$ lie outside the buffered layer and let $d$ be its distance to
the boundary of the corresponding homogeneous component in $B_{3/4}$.  If
$d$ is bounded below, the homogeneous interior estimate and
\eqref{eq:initial-affine-trapping} give the result.  Otherwise a nearest
boundary point $z$ lies in $B_{2/3}$ and satisfies the buffered-layer
estimate just proved.  Hence
$$
\osc_{B_{d/2}(x)}u
\leq C(1+|p|)d,
$$
and the scaled homogeneous gradient estimate gives the same bound at $x$.
This proves \eqref{eq:affine-continuation-lipschitz} throughout $B_{1/2}$.
\end{proof}

\begin{remark}[The role of summability]
\label{rem:slope-summability}
The proof uses boundedness, not absolute summability, of the accumulated
slopes. The vector series
$$
\sum_k(p_{k+1}-p_k)
$$
may fail to converge because of cancellation after the flatness error reaches
the $\sqrt\gamma$ floor. This causes no difficulty: bounded partial slopes
give the annular Lipschitz estimate directly. The large-slope normalization
is used only when those partial slopes are genuinely unbounded. Division by
the first-passage value then sends the effective strength to zero and places
the transmission maps in the identity regime of the De Silva--Savin
alternative.
\end{remark}

\subsection{The diffuse decay-versus-Lipschitz alternative}

We can now state the scale-invariant estimate that will drive the proof of
the main theorem.

\begin{proposition}[Diffuse decay-versus-Lipschitz alternative]
\label{prop:diffuse-DSS}
There exist $\theta\in(0,1/4)$ and $C<\infty$ such that the following
holds. Let $U\in C(B_2)$ solve
\begin{equation}\label{eq:diffuse-alternative-equation}
F(D^2U)
=
\frac\gamma\eta
\beta\left(\frac{U+\sigma}{\eta}\right)
\qquad\text{in }B_2,
\end{equation}
where
$$
U(0)=0,
\qquad
0<\gamma\leq1,
\qquad
\eta>0,
\qquad
|\sigma|\leq c_0\eta.
$$
Put
$$
A:=\norm{U}_{L^\infty(B_1)}.
$$
Then at least one of the following alternatives holds:
\begin{align}
\theta^{-1}\norm{U}_{L^\infty(B_\theta)}
&\leq
\frac12A+C\sqrt\gamma,
\label{eq:diffuse-decay}
\\
\norm{\nabla U}_{L^\infty(B_\theta)}
&\leq
C\left(A+\sqrt\gamma\right).
\label{eq:diffuse-lipschitz}
\end{align}
The constants are independent of $\eta$.
\end{proposition}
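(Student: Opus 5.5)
The proof will go by normalization and contradiction, transferring \Cref{thm:sharp-decay-alternative} through the compactness results of \Cref{sec:diffuse-compactness} and the uniform estimates of \Cref{sec:diffuse-continuation}. First we dispose of degenerate sizes. If $A\leq C_1\sqrt\gamma$ for a large universal $C_1$, we normalize by $A+\sqrt\gamma$. Then the oscillation is of order one and the effective strength is at most one. Moreover, either the rescaled thickness is bounded below, in which case the right-hand side is bounded and \eqref{eq:interior-gradient-estimate} gives \eqref{eq:diffuse-lipschitz}, or we are in the collapsing regime handled below. Otherwise we set $V:=U/A$, replace $F$ by $A^{-1}F(A\,\cdot)$, and get an equation of the same form with strength $\widehat\gamma=\gamma/A^2\leq C_1^{-2}$, thickness $\eta/A$, $\|V\|_{L^\infty(B_1)}=1$, and $V(0)=0$. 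Here \eqref{eq:diffuse-decay} becomes $\theta^{-1}\|V\|_{L^\infty(B_\theta)}\leq 1/2+C\sqrt{\widehat\gamma}$, and \eqref{eq:diffuse-lipschitz} becomes a universal gradient bound on $B_\theta$.

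Suppose the proposition fails for every $C$. Then there are $V_j,F_j,\gamma_j,\eta_j,\sigma_j$ with both alternatives violated, with constants $C=j$. By \Cref{lem:bounded-diffuse-compactness}, and noting that the violation of the decay branch with constant $j$ forces $\sqrt{\widehat\gamma_j}\to0$ (since $\|V_j\|\leq1$), we get $\widehat\gamma_j\to0$. Along a subsequence $V_j\to V$ locally uniformly in $B_2$, with $\|V\|_{L^\infty(B_1)}=1$ and $V(0)=0$. We use a uniform $L^\infty(B_2)$ bound obtained from the H\"older estimate \Cref{lem:uniform-diffuse-holder} applied on slightly larger balls; if necessary we replace $B_1$ by $B_{3/2}$ in the definition of $A$ and adjust $\theta$. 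By \Cref{lem:compactness}, the limit satisfies $F_\infty(D^2V)=0$, so it is $C^{1,\bar\alpha}$ by \eqref{eq:homogeneous-affine-approximation}. Choosing $\theta$ small, either $\theta^{-1}\|V\|_{L^\infty(B_\theta)}\leq1/4$, or $|\nabla V(0)|\geq c>0$ and $V$ is $\theta^{\bar\alpha}$-close to the linear function $\nabla V(0)\cdot x$ on $B_{2\theta}$. In the first case uniform convergence gives the decay branch for large $j$, a contradiction. In the second case we apply \Cref{lem:diffuse-affine-continuation} to the rescaling $V_j(2\theta x)/(2\theta)$, with $a_*=c/2$. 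Its effective strength is still $\to0$, and its affine error becomes small once $\theta$ is fixed and $j$ is large. This gives $\|\nabla V_j\|_{L^\infty(B_\theta)}\leq C$, the Lipschitz branch, again a contradiction.

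The collapsing small-$A$ case runs similarly. We normalize by $A+\sqrt\gamma$ with bounded strength and collapsing thickness. \Cref{lem:sharp-stability} then produces a two-phase limit with slope maps satisfying \eqref{eq:sharp-large-slope-hypotheses} with $M=K\sqrt{\gamma_\infty}$, which is universally bounded. \Cref{thm:sharp-decay-alternative} holds for the limit. Its decay branch transfers by uniform convergence, with the additive $CM$ absorbed into $C\sqrt\gamma$ after undoing the normalization. Its Lipschitz branch transfers through \Cref{lem:finite-thickness-transfer}, exactly as in Step~1 of \Cref{lem:diffuse-affine-continuation}.

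The main obstacle is the uniformity of the last transfer. Lipschitz bounds do not pass to the diffuse solutions under local uniform convergence alone. One must extract from the De Silva--Savin proof a nondegenerate flatness scale with slopes in a compact interval, and then invoke \Cref{lem:finite-thickness-transfer} uniformly in $\eta$. Equally delicate is ensuring that the affine continuation is applied with a smallness threshold $\varepsilon_*$ that is fixed before $\theta$ is chosen. I would first try to decouple these by fixing $a_*$ as a lower bound on the gradient valid for all nontrivial homogeneous limits at the scale $\theta$, and only then taking $j$ large.
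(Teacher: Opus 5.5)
Your argument for the regime $A>C_1\sqrt\gamma$ is the paper's proof. It argues by contradiction with constants $j$, notes that failure of the decay branch forces the normalized strength to zero, and passes by vanishing-strength compactness to a homogeneous limit. It then uses the $C^{1,\bar\alpha}$ dichotomy on $\nabla V(0)$ and applies \Cref{lem:diffuse-affine-continuation} to $V_j(2\theta x)/(2\theta)$. The paper normalizes by $H=A+\sqrt\gamma$ instead of $A$, but this is cosmetic.

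What you added around this core should be removed rather than completed. The regime $A\le C_1\sqrt\gamma$ needs no sharp-interface theory, and the ``main obstacle'' you name never arises in this proposition. In that regime $\theta^{-1}\norm{U}_{L^\infty(B_\theta)}\le\theta^{-1}A\le (C_1/\theta)\sqrt\gamma$, so the decay branch holds as soon as $C\ge C_1/\theta$. Your own remark already shows this: failure of decay with constant $j$ gives $\sqrt{\gamma_j}<A_j/(j\theta)$, so every counterexample eventually lies in the large-$A$ regime. The positive-thickness transfer of the De Silva--Savin Lipschitz branch is packaged inside \Cref{lem:diffuse-affine-continuation}, which you invoke anyway.

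The ordering of constants is not delicate either. By \eqref{eq:homogeneous-affine-approximation}, $\theta^{-1}\norm{V}_{L^\infty(B_\theta)}\le|\nabla V(0)|+C\theta^{\bar\alpha}$. So take $a_*=1/8$ independently of $\theta$, then $\varepsilon_*=\varepsilon_*(a_*)$ from the lemma, and only then choose $\theta$ with $C\theta^{\bar\alpha}\le\min\{1/8,\varepsilon_*/2\}$. No $\theta$-dependent lower bound on the gradients of limits is needed.

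One step is wrong as written and should simply be dropped: the convergence in $B_2$. \Cref{lem:uniform-diffuse-holder} is an interior estimate and cannot turn $\norm{V_j}_{L^\infty(B_1)}\le1$ into a bound on $B_2$. High-degree harmonic polynomials, kept off the active layer by taking $\sigma=2\eta$ with $\eta$ large, are admissible solutions for which $\norm{V}_{L^\infty(B_{3/2})}/\norm{V}_{L^\infty(B_1)}$ is unbounded. Redefining $A$ on $B_{3/2}$ would change the statement being proved.

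Nothing outside $B_{4\theta}\subset B_1$ is used, so local uniform convergence in $B_1$ suffices. Note also that the limit satisfies only $\norm{V}_{L^\infty(B_1)}\le1$, not $=1$, since local uniform convergence in the open ball does not preserve the supremum. That bound, together with the nontriviality in $B_\theta$ coming from the failed decay inequality, is all the argument needs.
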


\begin{proof}
Assume, toward a contradiction, that no universal constant works. We can
then choose a sequence for which both alternatives fail with $C$ replaced
by $j$. Write
$$
A_j:=\norm{U_j}_{L^\infty(B_1)},
\qquad
H_j:=A_j+\sqrt{\gamma_j},
\qquad
u_j:=\frac{U_j}{H_j}.
$$
The rescaled operators
$$
\widetilde F_j(M)
:=
\frac1{H_j}F_j(H_jM)
$$
have the same ellipticity constants, and $u_j$ satisfies an equation of the
form \eqref{eq:bounded-diffuse-sequence} with
$$
\mu_j:=\frac{\gamma_j}{H_j^2},
\qquad
\widetilde\eta_j:=\frac{\eta_j}{H_j},
\qquad
\widetilde\sigma_j:=\frac{\sigma_j}{H_j}.
$$
Failure of the decay alternative gives
$$
\theta^{-1}\norm{u_j}_{L^\infty(B_\theta)}
>
\frac12\frac{A_j}{H_j}+j\sqrt{\mu_j}.
$$
Since the left-hand side is bounded by $\theta^{-1}$, it follows that
\begin{equation}\label{eq:normalized-strength-vanishes}
\sqrt{\mu_j}\leq\frac1{j\theta},
\qquad
\frac{A_j}{H_j}=1-\sqrt{\mu_j}\longrightarrow1.
\end{equation}
In particular, \Cref{lem:bounded-diffuse-compactness} applies. After
passing to a subsequence,
$$
u_j\longrightarrow u_\infty
\qquad\text{locally uniformly in }B_1,
$$
and
\begin{equation}\label{eq:normalized-homogeneous-limit}
F_\infty(D^2u_\infty)=0
\qquad\text{in }B_1.
\end{equation}
Moreover,
$$
u_\infty(0)=0,
\qquad
\norm{u_\infty}_{L^\infty(B_1)}\leq1,
$$
and the failed decay inequality ensures that $u_\infty$ is nontrivial in
$B_\theta$.

Let $p:=\nabla u_\infty(0)$. By
\eqref{eq:homogeneous-affine-approximation}, after decreasing $\theta$ if
necessary,
\begin{equation}\label{eq:homogeneous-limit-affine-trapping}
\norm{u_\infty-p\cdot x}_{L^\infty(B_{2\theta})}
\leq
C\theta^{1+\bar\alpha}.
\end{equation}
If $|p|$ is smaller than a fixed universal number $a_*>0$, chosen after
$\theta$, then \eqref{eq:homogeneous-limit-affine-trapping} gives
$$
\theta^{-1}\norm{u_\infty}_{L^\infty(B_\theta)}
\leq\frac14.
$$
Local uniform convergence and $A_j/H_j\to1$ then contradict the failed
decay inequality. We must therefore have $|p|\geq a_*$.

Define
$$
v_j(x)
:=
\frac{u_j(2\theta x)}{2\theta},
\qquad x\in B_2.
$$
For $\theta$ fixed sufficiently small, and then $j$ sufficiently large,
\eqref{eq:homogeneous-limit-affine-trapping} gives
$$
\norm{v_j-p\cdot x}_{L^\infty(B_1)}
\leq\varepsilon_*.
$$
The spatial-amplitude scaling leaves the effective strength $\mu_j$
unchanged and preserves the shifted-layer bound. Hence
\Cref{lem:diffuse-affine-continuation} applies and yields
$$
\norm{\nabla u_j}_{L^\infty(B_\theta)}\leq C.
$$
Scaling back by $H_j$ gives
$$
\norm{\nabla U_j}_{L^\infty(B_\theta)}
\leq
C\left(A_j+\sqrt{\gamma_j}\right),
$$
contradicting the assumed failure of
\eqref{eq:diffuse-lipschitz} with constant $j$. This proves the
proposition.
\end{proof}

\subsection{Linear growth from a transition point}

The preceding alternative immediately gives the scale-invariant growth
estimate needed in the final blow-up argument.

\begin{corollary}[Growth from a transition point]
\label{cor:growth}
Let $U$ solve \eqref{eq:diffuse-alternative-equation} in $B_2$, with
$U(0)=0$ and $0<\gamma\leq1$. Then
\begin{equation}\label{eq:growth}
|U(x)|
\leq
C\left(
\norm{U}_{L^\infty(B_1)}+\sqrt\gamma
\right)|x|
\qquad\text{for }x\in B_{1/2}.
\end{equation}
\end{corollary}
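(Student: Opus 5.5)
The plan is to iterate \Cref{prop:diffuse-DSS} dyadically at the geometric scales $\theta^k$. The iteration runs until either the Lipschitz branch occurs or the normalized oscillation drops to the $\sqrt\gamma$ floor, where the bounded-slope regime takes over.

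Set
$$
A_k:=\theta^{-k}\norm{U}_{L^\infty(B_{\theta^k})},
\qquad
U_k(x):=\theta^{-k}U(\theta^kx).
$$
Since $U(0)=0$, each $U_k$ solves an equation of the form \eqref{eq:diffuse-alternative-equation} in $B_2$. Its operator is $\theta^{k}F(\theta^{-k}\,\cdot)$, its thickness and shift are divided by $\theta^k$, and the bound $|\sigma|\leq c_0\eta$ is preserved. By \eqref{eq:rescaled-singular-equation} with $r=A=\theta^k$, the effective strength is $\gamma_k=\gamma$, which is unchanged. Also $U_k(0)=0$ and $\norm{U_k}_{L^\infty(B_1)}=A_k$.

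Applying \Cref{prop:diffuse-DSS} to $U_k$ gives, at each step, one of two outcomes:
\begin{itemize}
\item[(i)] the decay branch, $A_{k+1}\leq\frac12A_k+C\sqrt\gamma$;
\item[(ii)] the Lipschitz branch, $\norm{\nabla U_k}_{L^\infty(B_\theta)}\leq C(A_k+\sqrt\gamma)$.
\end{itemize}

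Let $k_*\in\{0,1,\dots\}\cup\{\infty\}$ be the first index at which (ii) occurs.

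For $k<k_*$, (i) holds at every step. Induction gives
$$
A_k\leq 2^{-k}A_0+2C\sqrt\gamma\leq A_0+2C\sqrt\gamma .
$$
Now take $x\in B_{1/2}$. If $x\neq0$ and $\theta^{k+1}<|x|\leq\theta^k$ with $k+1\leq k_*$, then
$$
|U(x)|\leq\theta^{k}A_k\leq\theta^{-1}(A_0+2C\sqrt\gamma)\,|x| .
$$
If instead $|x|\leq\theta^{k_*+1}$ with $k_*<\infty$, then (ii) at step $k_*$, together with $U(0)=0$ and the fact that rescaling preserves gradients, gives
$$
|U(x)|\leq C(A_{k_*}+\sqrt\gamma)|x|\leq C(A_0+\sqrt\gamma)|x| .
$$
Every point of $B_{1/2}$ falls into one of these two cases, so \eqref{eq:growth} follows with $A_0=\norm{U}_{L^\infty(B_1)}$. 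The range $1/2\geq|x|>\theta$ is covered trivially, since there $|U(x)|\leq A_0\leq\theta^{-1}A_0|x|$.

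The only point needing care is that \Cref{prop:diffuse-DSS} requires $U_k$ to be defined in $B_2$ with $\gamma_k\leq1$. Both hold: $U$ lives in $B_2$ and $\theta^k\leq1$, so $U_k$ lives on $B_{2\theta^{-k}}\supset B_2$, and $\gamma_k=\gamma$. The constant is independent of $\eta$ because the proposition's constants are.
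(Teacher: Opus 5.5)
Your proof is correct and follows the paper's argument essentially verbatim. You iterate \Cref{prop:diffuse-DSS} on $U_k(x)=\theta^{-k}U(\theta^kx)$, which leaves the strength unchanged, bound $A_k$ through the decay steps up to the first Lipschitz index $k_*$, and let the Lipschitz branch at $k_*$ control all smaller radii.

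There is one small indexing slip. As written, your two cases miss the annulus $\theta^{k_*+1}<|x|\le\theta^{k_*}$ when $1\le k_*<\infty$. The decay recursion gives $A_k\le A_0+2C\sqrt\gamma$ for all $k\le k_*$, which you already use for $A_{k_*}$. So your first case extends to $k\le k_*$ and covers that annulus.
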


\begin{proof}
Set
$$
a_k
:=
\theta^{-k}
\norm{U}_{L^\infty(B_{\theta^k})},
\qquad k\geq0.
$$
Apply \Cref{prop:diffuse-DSS} to the natural rescaling
$$
U_k(x):=\theta^{-k}U(\theta^kx).
$$
The effective strength remains $\gamma$, while the thickness and the shift
are divided by $\theta^k$; in particular, the shifted-layer bound is
preserved. If the decay branch occurs at scale $k$, then
$$
a_{k+1}
\leq
\frac12a_k+C\sqrt\gamma.
$$
If only this branch occurs, iteration gives
$$
\sup_{k\geq0}a_k
\leq
C(a_0+\sqrt\gamma).
$$
If the Lipschitz branch first occurs at an index $k_0$, then
$U_{k_0}(0)=0$ and \eqref{eq:diffuse-lipschitz} controls all smaller radii:
$$
a_k
\leq
C(a_{k_0}+\sqrt\gamma)
\qquad\text{for }k>k_0.
$$
The preceding decay steps bound $a_{k_0}$ by
$C(a_0+\sqrt\gamma)$. Thus the same bound for $\sup_k a_k$ holds in both
cases. Comparing an arbitrary radius with two consecutive powers of
$\theta$ proves \eqref{eq:growth}.
\end{proof}

\section{Transition-boundary growth and completion of the proof}
\label{sec:completion}

The local estimates of \Cref{sec:normalization-local} reduce the normalized
theorem to the control of the boundary-to-domain seminorm $\mathcal B(v)$.
The growth estimate obtained in \Cref{cor:growth} supplies precisely this
missing bound. The weight in \eqref{eq:Bint} is important here: it compensates
for the amplitude introduced when a level-boundary point approaches the
artificial boundary of $B_{2/3}$.

\begin{proposition}[Transition-boundary Lipschitz bound]
\label{prop:Bbound}

Every solution of \eqref{eq:normalized} satisfies
\begin{equation}\label{eq:Bbound}
\mathcal B(v)\leq C.
\end{equation}
\end{proposition}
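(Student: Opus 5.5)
The plan is to apply \Cref{cor:growth} at each level-boundary point $z$, on a ball whose radius is tied to the weight $2/3-|z|$. Near $z$ the linear growth will be scale invariant; far from $z$, the unit-amplitude normalization already gives a difference quotient of order one.

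Fix $z\in\Gamma_\delta(v)\cap B_{2/3}$ and $x\in B_{3/4}\setminus\{z\}$, and set $\rho:=\tfrac12\bigl(\tfrac23-|z|\bigr)$. Then $B_{2\rho}(z)\subset B_{2/3}\subset B_1$. Since $\Gamma_\delta(v)$ is the relative boundary of $\{|v|<c_0\delta\}$, continuity gives $|v(z)|=c_0\delta$.

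\emph{Far points.} If $|x-z|\geq\rho/2$, then $\|v\|_{L^\infty(B_1)}\le 1$ gives
$$
\Bigl(\tfrac23-|z|\Bigr)\frac{|v(x)-v(z)|}{|x-z|}\le 2\rho\cdot\frac{2}{\rho/2}=8 .
$$

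\emph{Near points.} If $|x-z|<\rho/2$, rescale by
$$
U(y):=\frac{v(z+\rho y)-v(z)}{\rho},\qquad y\in B_2 .
$$
By \eqref{eq:rescaled-singular-equation} with $A=r=\rho$ and $c=v(z)$, $U$ solves \eqref{eq:diffuse-alternative-equation} with the operator $F_{\rho,\rho}$. The parameters are
$$
\gamma'=\gamma\le1,\qquad \eta=\delta/\rho,\qquad \sigma=v(z)/\rho .
$$
Hence $|\sigma|=c_0\eta$, so the shifted-layer hypothesis holds with equality, and $U(0)=0$. \Cref{cor:growth} then gives
$$
|U(y)|\le C\bigl(\|U\|_{L^\infty(B_1)}+\sqrt\gamma\bigr)|y|,\qquad |y|<\tfrac12 .
$$
Undoing the scaling, for $|x-z|<\rho/2$,
$$
\frac{|v(x)-v(z)|}{|x-z|}\le C\Bigl(\frac{\osc_{B_\rho(z)}v}{\rho}+1\Bigr)\le C\Bigl(\frac{2}{\rho}+1\Bigr).
$$
Multiplying by the weight $2\rho$ gives a universal bound. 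Taking the supremum over $z$ and $x$ yields \eqref{eq:Bbound}.

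The case analysis above is routine. The main point needing care is that \Cref{cor:growth} is invoked at a base point sitting exactly on the edge $|\sigma|=c_0\eta$ of the admissible shift range, with an arbitrary thickness $\eta=\delta/\rho$ that may be large or small. I would check that \Cref{prop:diffuse-DSS} and its ingredients (\Cref{lem:uniform-diffuse-holder}, \Cref{lem:diffuse-affine-continuation}) are stated with the non-strict bound $|\sigma|\le c_0\eta$, uniformly in $\eta>0$, so no degeneration occurs. If a strict inequality were needed somewhere, I would instead replace $z$ by a nearby point $z'$ with $|v(z')|<c_0\delta$, apply the estimate there, and pass to the limit $z'\to z$ using continuity of $v$.

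The weight is what makes the near-point argument uniform: without it, $\rho$ could tend to zero as $z$ approaches $\partial B_{2/3}$, and the factor $1/\rho$ would be unbounded. This is exactly the role of the weight announced after \eqref{eq:Bint}.
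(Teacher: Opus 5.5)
Your proposal is correct and follows the paper's proof essentially step for step: rescale at each level-boundary point $z$ on a ball of radius comparable to $\frac23-|z|$, apply \Cref{cor:growth} for nearby $x$, and use $\norm{v}_{L^\infty(B_1)}\le1$ for distant $x$. The only differences are cosmetic. The paper takes $\rho_z=\frac18\bigl(\frac23-|z|\bigr)$ rather than your $\frac12\bigl(\frac23-|z|\bigr)$, and both choices keep $B_{2\rho}(z)\subset B_{2/3}$. The paper also applies the corollary directly with $|\sigma_z|=c_0\eta_z$, which its non-strict hypothesis $|\sigma|\le c_0\eta$ allows, so your fallback approximation by interior points is unnecessary.
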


\begin{proof}
Fix
$$
z\in\Gamma_\delta(v)\cap B_{2/3}
$$
and set
\begin{equation}\label{eq:boundary-distance-scale}
d_z:=\frac23-|z|,
\qquad
\rho_z:=\frac{d_z}{8}.
\end{equation}
Since $z$ is an interior point of the relative level boundary and $v$ is
continuous,
\begin{equation}\label{eq:level-boundary-value}
|v(z)|=c_0\delta.
\end{equation}
Moreover,
$$
|z|+2\rho_z
=
|z|+\frac14\left(\frac23-|z|\right)
\leq\frac23,
$$
so that $B_{2\rho_z}(z)\subset B_{2/3}\subset B_1$.

Consider the natural spatial-amplitude rescaling
\begin{equation}\label{eq:boundary-point-rescaling}
U(y)
:=
\frac{v(z+\rho_zy)-v(z)}{\rho_z},
\qquad y\in B_2,
\end{equation}
and define
$$
F_z(M)
:=
\rho_zF\left(\frac{M}{\rho_z}\right),
\qquad
\eta_z:=\frac{\delta}{\rho_z},
\qquad
\sigma_z:=\frac{v(z)}{\rho_z}.
$$
The operator $F_z$ is normalized and has the same ellipticity constants as
$F$. By the scaling identity \eqref{eq:rescaled-singular-equation}, the
function $U$ satisfies
\begin{equation}\label{eq:boundary-point-equation}
F_z(D^2U)
=
\frac{\gamma}{\eta_z}
\beta\left(\frac{U+\sigma_z}{\eta_z}\right)
\qquad\text{in }B_2.
\end{equation}
Furthermore,
$$
U(0)=0,
\qquad
|\sigma_z|=c_0\eta_z,
\qquad
\norm{U}_{L^\infty(B_1)}
\leq
\frac{2}{\rho_z}.
$$
Thus all the hypotheses of \Cref{cor:growth} are satisfied.

Let $x\in B_{3/4}\setminus\{z\}$. We distinguish two ranges. If
$|x-z|\leq\rho_z/2$, put $y=(x-z)/\rho_z\in B_{1/2}$. By
\eqref{eq:boundary-point-rescaling} and \Cref{cor:growth},
\begin{align*}
d_z\frac{|v(x)-v(z)|}{|x-z|}
&=
d_z\frac{|U(y)|}{|y|}
\\
&\leq
Cd_z\left(
\norm{U}_{L^\infty(B_1)}+\sqrt\gamma
\right)
\\
&\leq
Cd_z\left(\frac2{\rho_z}+1\right)
\leq C,
\end{align*}
where we used $\gamma\leq1$ and $d_z=8\rho_z$. This is the precise
cancellation for which the weight in \eqref{eq:Bint} was introduced.

If instead $|x-z|>\rho_z/2$, the normalization
$\norm{v}_{L^\infty(B_1)}\leq1$ gives
$$
d_z\frac{|v(x)-v(z)|}{|x-z|}
\leq
2d_z\frac{2}{\rho_z}
=32.
$$
The two estimates are uniform in $z$ and $x$. Taking the suprema in
\eqref{eq:Bint} proves \eqref{eq:Bbound}.
\end{proof}

\begin{proof}[Proof of \Cref{thm:normalized}]
By \Cref{cor:reduction} and \Cref{prop:Bbound},
$$
\norm{\nabla v}_{L^\infty(B_{1/2})}
\leq
C\left(1+\mathcal B(v)\right)
\leq C.
$$
This is \eqref{eq:normalized-bound}.
\end{proof}

\begin{proof}[Proof of \Cref{thm:main-intro}]
The result follows from \Cref{prop:normalization}.
\end{proof}

\section{\texorpdfstring{Sharpness of the additive $\sqrt{\alpha}$ term}%
{Sharpness of the square-root additive contribution}}
\label{sec:sharpness}

The scale $\sqrt\alpha$ in \Cref{thm:main-intro} is dictated by the
reaction layer and is quantitatively optimal. We demonstrate this with an
explicit radial family for the Laplacian. More precisely, when $n\geq2$ we
construct solutions whose $L^\infty$ norms are $o(\sqrt\alpha)$, while their
gradients remain bounded below by a fixed multiple of $\sqrt\alpha$.

\lealemma{label=lemRadialReactionProfileExistence, uses={SingularPerturbationProblemData}, context={Treat global existence and smoothness for this radial semilinear ODE, including regularity at the origin, as an imported standard ODE result. The chosen value a lies in (-1,1) and satisfies beta(a) > 0.}}{Assume that $\beta\not\equiv0$, and choose $a\in(-1,1)$ such that
$\beta(a)>0$. Let $\phi$ be the smooth radial solution of
\begin{equation}\label{eq:radial-profile-ode}
\phi''(s)
+
\frac{n-1}{s}\phi'(s)
=
\beta(\phi(s)),
\qquad
\phi(0)=a,
\qquad
\phi'(0)=0.
\end{equation}
The equation at the origin is understood in the usual smooth radial sense.
In particular,
\begin{equation}\label{eq:radial-second-derivative}
\phi''(0)=\frac{\beta(a)}{n}>0.
\end{equation}
Consequently, there exist constants $s_*>0$ and $c_*>0$, depending only on
$n$, $\beta$, and the choice of $a$, such that
\begin{equation}\label{eq:radial-positive-slope}
\phi'(s_*)=c_*.
\end{equation}
}

\lealemma{label=lemRadialReactionProfileGrowth, uses={lemRadialReactionProfileExistence,SingularPerturbationProblemData}}{We first record the global growth of the profile. Multiplying
\eqref{eq:radial-profile-ode} by $s^{n-1}$ gives
\begin{equation}\label{eq:radial-monotonicity}
s^{n-1}\phi'(s)
=
\int_0^s t^{n-1}\beta(\phi(t))\,dt
\geq0.
\end{equation}
Thus $\phi$ is nondecreasing. If it never reaches the upper endpoint of the
support of $\beta$, then it is bounded. Otherwise, once it has passed that
endpoint, the reaction vanishes permanently and $\phi$ satisfies the radial
harmonic equation
$$
\phi''+\frac{n-1}{s}\phi'=0.
$$
It follows that
\begin{equation}\label{eq:radial-profile-growth}
|\phi(s)|
\leq
\begin{cases}
C, & n\geq3,\\[2mm]
C\bigl(1+\log(2+s)\bigr), & n=2,
\end{cases}
\qquad s\geq0,
\end{equation}
where $C$ depends only on $n$, $\beta$, and $a$.
}

\leadefinition{label=RadialSharpnessFamily, uses={lemRadialReactionProfileExistence}}{For $\eps,\alpha>0$, define
\begin{equation}\label{eq:radial-sharpness-family}
u_{\eps,\alpha}(x)
:=
\eps\,
\phi\left(\frac{\sqrt\alpha}{\eps}|x|\right).
\end{equation}
}%
\lealemma{label=lemRadialSharpnessEquationAndSlope, uses={RadialSharpnessFamily,lemRadialReactionProfileExistence,SingularPerturbationProblemData}}{A direct computation using \eqref{eq:radial-profile-ode} gives
\begin{equation}\label{eq:radial-sharpness-equation}
\Delta u_{\eps,\alpha}
=
\frac\alpha\eps
\beta\left(\frac{u_{\eps,\alpha}}\eps\right)
\qquad\text{in }\mathbb R^n.
\end{equation}
At the point
$$
x_{\eps,\alpha}
:=
\frac{\eps s_*}{\sqrt\alpha}e_1,
$$
we have, by \eqref{eq:radial-positive-slope},
\begin{equation}\label{eq:radial-gradient-lower-bound}
|\nabla u_{\eps,\alpha}(x_{\eps,\alpha})|
=
c_*\sqrt\alpha.
\end{equation}
Whenever $\eps s_*/\sqrt\alpha<1/2$, this point belongs to $B_{1/2}$.
}

\lealemma{label=lemRadialSharpnessAmplitude, uses={RadialSharpnessFamily,lemRadialReactionProfileGrowth}}{On the other hand, \eqref{eq:radial-profile-growth} yields
\begin{equation}\label{eq:radial-Linfty-bound}
\norm{u_{\eps,\alpha}}_{L^\infty(B_1)}
\leq
\begin{cases}
C\eps, & n\geq3,\\[2mm]
C\eps\left(
1+\log\left(2+\dfrac{\sqrt\alpha}{\eps}\right)
\right), & n=2.
\end{cases}
\end{equation}
To make the optimality explicit, let $\alpha\downarrow0$ and choose
$\eps=\alpha$. Then
$$
x_{\alpha,\alpha}
=
s_*\sqrt\alpha\,e_1
\in B_{1/2}
$$
for all sufficiently small $\alpha$, and
\begin{equation}\label{eq:sharpness-gradient-final}
\norm{\nabla u_{\alpha,\alpha}}_{L^\infty(B_{1/2})}
\geq
c_*\sqrt\alpha.
\end{equation}
At the same time,
\begin{equation}\label{eq:sharpness-amplitude-final}
\frac{
\norm{u_{\alpha,\alpha}}_{L^\infty(B_1)}
}{\sqrt\alpha}
\longrightarrow0.
\end{equation}
Indeed, the ratio in \eqref{eq:sharpness-amplitude-final} is bounded by
$C\sqrt\alpha$ when $n\geq3$, and by
$$
C\sqrt\alpha
\left(
1+\log\left(2+\frac1{\sqrt\alpha}\right)
\right)
$$
when $n=2$; both quantities tend to zero.
}

\leaproposition{label=propSharpnessSqrtAlpha, uses={MainLipschitzEstimate,lemRadialSharpnessEquationAndSlope,lemRadialSharpnessAmplitude}, context={The quantified negation is over constants C having only the universal dependence allowed in MainLipschitzEstimate and over functions omega satisfying omega(alpha)/sqrt(alpha) -> 0 as alpha -> 0. Interpret gradient norms as Lipschitz bounds after the smooth radial construction has supplied classical gradients.}}{It follows that the additive term in \Cref{thm:main-intro} cannot be
replaced by any quantity of smaller order. More precisely, there is no
universal estimate of the form
$$
\norm{\nabla u_\eps}_{L^\infty(B_{1/2})}
\leq
C\left(
\norm{u_\eps}_{L^\infty(B_1)}+\omega(\alpha)
\right)
$$
with
$$
\omega(\alpha)=o(\sqrt\alpha)
\qquad\text{as }\alpha\downarrow0.
$$
Thus the $\sqrt\alpha$ contribution in the main estimate is sharp.
}

\end{document}